\newtheorem{theorem}{Theorem}[section]
\newtheorem{corollary}[theorem] {Corollary}
\newtheorem{definition}[theorem]{Definition}
\newtheorem{example}[theorem]{Example}
\newtheorem{lemma}[theorem]{Lemma}
\newtheorem{proposition}[theorem]{Proposition}
\newtheorem{remark}[theorem]{Remark}
\newcommand\Q{\mathbb{Q}}
\newcommand\pa{\partial}
\newcommand\B{\mathcal{B}}
\newcommand\R{\mathbb{R}}
\newcommand\Z{\mathbb{Z}}
\newcommand\C{\mathbb{C}}
\newcommand{\dTC}{\mathrm{dTC}}
\newcommand{\supp}{\mathrm{supp}}
\newcommand{\dcat}{\mathrm{dcat}}
\newcommand{\dsct}{\mathrm{dsecat}}
\newcommand{\dsecat}{\mathrm{dsecat}}
\newcommand{\TC}{\mathrm{TC}}
\newcommand{\cat}{\mathrm{cat}}
\newcommand{\secat}{\mathrm{secat}}
\newcolumntype{x}[1]{>{\centering\arraybackslash}p{#1}}
\begin{document}
\title[On the complexity of parametrized motion planning algorithms]{On the complexity of
\\
parametrized motion planning algorithms}

\author[N. Daundkar]{Navnath Daundkar}
\address{Navnath Daundkar, Department of Mathematics, Indian Institute of Technology Madras, Chennai, India.}
\email{navnath@iitm.ac.in}
\author[E. Jauhari]{Ekansh Jauhari}
\address{Ekansh Jauhari, Department of Mathematics, University of Florida, 358 Little Hall, Gainesville, FL 32611, USA.}
\email{ekanshjauhari@ufl.edu}

\thanks{}

\begin{abstract}
We study a probabilistic variant of the $r$-th sequential parametrized topological complexity, which bounds this classical invariant from below and measures the difficulty in constructing permissive parametrized motion planning algorithms. On one hand, we use cohomology to show that this new invariant behaves similarly to the classical invariant on Fadell--Neuwirth fibrations and oriented sphere bundles; on the other hand, we use equivariant homotopy theory to prove that its behavior is wildly different on bundles whose fibers are real projective spaces and whose structure groups are special orthogonal groups. We also explore several other features of our invariant and its relationships with various other invariants motivated by topological robotics.
\end{abstract}

%
%
%
%

\keywords{parametrized topological complexity, equivariant topological complexity, sphere bundle, Fadell--Neuwirth fibration, orthogonal group, distributional complexity.}

\subjclass[2020]{Primary 55S40, 
55M30, 
Secondary 55R10, 
70Q05, 
55R25, 
55R91. 
}

%
%
%
%

\maketitle

\section{Introduction}\label{sec:intro}
In robotics, an autonomously functioning mechanical system, such as a robot, is typically required to move from a specified initial position to a desired target position within its configuration space\hspace{0.3mm}\footnote{\hspace{0.3mm}The configuration space of a mechanical system is the space of all possible positions or configurations of the system, endowed with a suitable topology.} in an optimal manner. By \emph{optimal}, we mean executing the motion with the fewest possible instabilities or discontinuities, which are governed by the topology of the configuration space. This is the classical motion planning problem, where, given an input of the initial and the final positions of the system, a motion of the system from the initial to the final position is expected as the output. A solution to this problem is given by a \emph{motion planning algorithm}, which is a rule that enables a system to autonomously navigate between any two given positions inside its configuration space. Farber's \emph{topological complexity}~\cite{F} is a homotopy invariant that provides a numerical measure of the difficulty in constructing a motion planning algorithm on a given mechanical system.

The motion planning problem becomes more complicated when an algorithm is to be constructed to navigate a system between multiple (or more than two) given positions in a prescribed order at specific instances of time. For such \emph{sequential} motions, Rudyak's \emph{sequential topological complexity}~\cite{Rudyak} extends Farber's notion in a natural way by giving a similar numerical measure of the complexity of constructing the required sequential motion planning algorithms.  

For certain advanced systems that are capable of breaking and reassembling repeatedly while in motion\hspace{0.3mm}\footnote{\hspace{0.3mm}Examples of such systems may include robots like Terminator 2 from the eponymous movie, and a flock of identical drones that is viewed as a single mechanical system.}, a new approach to the motion planning problem was suggested recently by Dranishnikov and Jauhari in~\cite{D-J},~\cite{Jau1}, and independently and simultaneosuly, by Knudsen and Weinberger in~\cite{K-W}. This approach allows a mechanical system to break into finitely many smaller, \emph{unordered} weighted pieces while navigating between a given input of its positions, provided the system reassembles while attaining each required position in the prescribed order at specific instances of time. A homotopy invariant, \emph{sequential distributional complexity}, measures the difficulty in constructing such a motion planning algorithm on a given system.

In this breaking and reassembling type of motion, each piece of the system represents a \emph{state of the motion} of the system, and the weight of a piece represents the probability of the system being in that corresponding state of motion. Then this motion corresponds to an unordered probability distribution. So, sequential distributional complexity can also be seen as a measure of the minimal level of \emph{randomness} in such a motion planning algorithm (see also~\cite{K-W}\hspace{0.3mm}\footnote{\hspace{0.3mm}Strictly speaking, in~\cite{K-W}, such an interpretation is given for a related invariant, namely the \emph{sequential analog complexity}. However, it is now known that this invariant coincides with the sequential distributional complexity on compact metric spaces, among other spaces,~\cite{K-W2},~\cite[Page 2]{Dr}.}). We invite the reader to compare this probabilistic interpretation with the one given by Farber in~\cite{F2} for conventional motion, where it is explained that conventional motion of a system corresponds, instead, to an \emph{ordered} probability distribution.

As is intuitively clear, this breaking and reassembling type of motion (which we may sometimes refer to as a \emph{distributed} motion) could be easier to plan than conventional motion, where the entire system must navigate as a single unit. Thus, the sequential distributional complexity of a space cannot exceed its sequential topological complexity. For instance, for the simple problem of planning the motion of a rotating line in the $(n+1)$-dimensional Euclidean space $\R^{n+1}$ that is fixed along a revolving joint at a base point, while the topological complexity is equal to the immersion dimension of the real projective space $\R P^n$ when $n\ne1,3,7$ and is equal to $n$ otherwise due to~\cite{FTY}; the distributional complexity is always $1$, i.e., the least possible, as observed in both~\cite{D-J} and~\cite{K-W}. Therefore, motions planned with this approach could, at times, be much more efficient.

\subsection{Parametrized motion planning}\label{intro: parametrized}
Typically, mechanical systems, such as robots, are subject to variable external conditions while in motion. An example of this situation is when a robot is expected to navigate in a physical space by avoiding (possibly non-stationary) objects with unknown a priori positions\hspace{0.3mm}\footnote{\hspace{0.3mm}See~\cite[Page 2]{C-F-W} for an explicit such practical situation.}. Such external conditions are viewed as parameters and included in the input of a motion planning algorithm, which is then called a \emph{parametrized motion planning algorithm}. Of course, such algorithms are more flexible and universal, in the sense that they control the navigation of the system in a variety of situations. To numerically measure the difficulty in constructing parametrized motion planning algorithms, the notion of \emph{parametrized topological complexity} was introduced and studied by Cohen, Farber, and Weinberger in~\cite{C-F-W}. Subsequently, the notion of \emph{sequential parametrized topological complexity} was developed by Farber and Paul in~\cite{F-P1} as a natural extension to measure the complexity of parametrized algorithms that control the navigation of systems moving between multiple positions while subject to constraints.

In this framework, the constraints acting on the system are encoded by an auxiliary topological space $B$. The associated \emph{universal configuration space} is modeled by a fibration $p\colon E \to B$, where $E$ is interpreted as a union of configuration spaces indexed by the parameters in $B$, i.e., $E = \bigcup_{b \in B} F_b$. Here, each fiber $F_b$ represents the space of all achievable configurations of the system subject to the constraint $b \in B$. Then, a parametrized motion planning algorithm on a system takes as input a finite sequence of positions of the system lying in the same fiber (this means under the same constraint), and returns as output a continuous path (or motion of the system) between them that also lies entirely within that fiber, thereby respecting the imposed constraint throughout. 

In this paper, we propose a ``distributional'' approach to parametrized motion planning to reduce the complexity of parametrized motion planning algorithms. More precisely, we want to study parametrized motion planning algorithms on advanced systems that are capable of breaking and reassembling while moving. These algorithms will be permissive in the sense that they will allow our system to break into smaller, unordered pieces so that it can execute a sequential motion between input positions subject to the same constraint. This kind of breaking and reassembling of a system will produce a motion whose complexity will potentially be less than that of conventional parametrized motion, since the motion, as explained above, will correspond to an unordered probability distribution of paths whose images lie in a common fiber (or external constraint). 

To measure the difficulty (or, in probabilistic terms, to record the minimal degree of the unordered randomness) in constructing such more flexible parametrized motion planning algorithms on advanced systems, we introduce an invariant that we call, for obvious reasons, \emph{sequential parametrized distributional complexity}. For a fibration, which is viewed as a model of the universal configuration space of a system that is subject to external parameters, this invariant gives the minimal number of unordered weighted pieces into which the system must decompose so that the pieces can travel, continuously and independently, between any given fixed-length sequence of positions in a prescribed order while respecting the corresponding external conditions. Therefore, in a sense, sequential parametrized distributional complexity quantifies the \emph{instabilities} of parametrized distributed motion planning algorithms that work simultaneously across all fibers.

\subsection{Features of sequential parametrized distributional complexity}\label{subsec: features}
Let us now list some mathematical features of our new invariant that we find in this paper and compare it with other, previously studied invariants in topological robotics.

For a given fibration $p\colon E\to B$, just like the $r$-th sequential parametrized topological complexity, which is denoted by $\TC_r[p\colon E\to B]$, our new invariant, the $r$-th sequential parametrized distributional complexity, denoted $\dTC_r[p\colon E\to B]$, is defined in terms of the fibration $\Pi_r^E\colon E^I_B\to E^r_B$, which we describe in~\Cref{subsect: parametrized TC}. In detail, $\dTC_r[p\colon E\to B]$ is the \emph{distributional sectional category} of $\Pi_r^E$. The latter is a homotopy invariant of fibrations that we study in~\Cref{sect: dsecat} and a distributional analogue of (and hence, a lower bound to) the classical \emph{sectional category} or \emph{Schwarz genus} recalled in~\Cref{subsec: secat}. 

Basically, $\dTC_r[p\colon E\to B]$ is defined using finitely-supported probability measures in the space $E^I_B$ which lie in a single fiber of the fibration $\Pi_r^E\colon E^I_B\to E^r_B$, see~\Cref{sec: sequential parametrized distributional complexity}. This interpretation allows us to prove several basic homotopy-theoretic properties of the $r$-th sequential parametrized distributional complexity in~\Cref{subsec: basic properties}, such as its homotopy invariance and its connection with the $r$-th sequential (non-parametrized) distributional complexity of the fiber $F$ of a fibration $p\colon E\to B$, which is denoted by $\dTC_r(F)$. When $p\colon E\to B$ is a principal $G$-bundle (so that the fiber $F$ is the path-connected topological group $G$), we obtain, analogous to the classical setting, an equality 
\[
\dTC_r[p\colon E\to B]=\dTC_r(G),
\] 
see~\Cref{prop: seq dtc of principal G bundles}. Using this, we show in~\Cref{ex: temporary} that for all (infinitely many) principal $SO(3)$-bundles $p\colon E\to B$, there is the strict inequality 
\begin{equation}\label{eq: different}
\dTC_r[p\colon E\to B]<\TC_r[p\colon E\to B]
\end{equation}
for each $r\ge 2$. This establishes that the notions of sequential parametrized topological and distributional complexities are mutually different, so the theory of the latter is of independent interest. In terms of motion planning, it means that for any mechanical system whose space of configurations subject to external constraints is $\R P^3\cong SO(3)$, and whose universal configuration space is modeled by a principal $SO(3)$-bundle, planning conventional $r$-th sequential parametrized motion requires exactly $3r-3$ \emph{rules}, while planning the breaking and reassembling type of the $r$-th sequential parametrized motion requires at most $2r+1$ rules, so that the new motion is less complicated. 

Inspired by the case of (sequential) parametrized topological complexity~\cite{C-F-W},~\cite{F-P1}, we then find \emph{sharp} cohomological lower bounds to $\dTC_r[p\colon E\to B]$ for each $r$ and fibration $p\colon E\to B$ in~\Cref{sec: lower bounds} using the cohomology ring (with arbitrary coefficients) of the symmetric products $SP^k(E^r_B)$ of the space of parametrized inputs $E^r_B$. In the case when $E$ and $B$ are finite CW complexes and the coefficient field is $\Q$, we show in~\Cref{prop: zcl bound} that our lower bound to $\dTC_r[p\colon E\to B]$ coincides with the well-known lower bound to $\TC_r[p\colon E\to B]$, which is simply in terms of the rational cohomology of $E$ and $E^r_B$. These results utilize methods developed in~\cite{D-J},~\cite{Jau1}, and~\cite{Jau2}.

In~\Cref{subsec: fadell-neuwirth}, we use our cohomological lower bounds to compute the $r$-th sequential parametrized distributional complexity of the \emph{Fadell--Neuwirth fibrations} 
\[
p\colon F(\R^d,m+n)\to F(\R^d,m), \hspace{5mm} p(x_1,\ldots,x_m,x_{m+1},\ldots,x_{m+n})=(x_1,\ldots,x_m)
\]
for each $r,m,d\ge 2$ and $n\ge 1$. It turns out that for these fibrations, there is the equality 
\[
\dTC_r[p\colon F(\R^d,m+n)\to F(\R^d,m)]=\TC_r[p\colon F(\R^d,m+n)\to F(\R^d,m)].
\]
Our computation uses the hard work done in~\cite{C-F-W},~\cite{PTCcolfree},~\cite{F-P1},~\cite{F-P2} for the $\TC_r$ values. Here, $F(\R^d,k)$ is the configuration space of $k$ distinct ordered points inside the Euclidean space $\R^d$, and $p$ is the map that forgets the last $n$ configurations. While these fibrations are important in topology~\cite{F-N}, they are also relevant in robotics. Consider the problem of planning a collision-free motion of $n$ robots in $\R^d$ in the presence of $m$ obstacles. A solution to this problem is given by a section of the Fadell--Neuwirth fibration $p\colon F(\R^d,m+n)\to F(\R^d,m)$. Hence, the difficulty in planning such an $r$-sequential motion in the conventional (resp. the new) way is equal to the $r$-th sequential parametrized topological (resp. distributional) complexity of $p$. It turns out that in any case, this parametrized motion requires $rn+m-2$ rules if $d$ is even (for example, if the collision-free motion is expected in the $2$-dimensional plane) and $rn+m-1$ rules if $d$ is odd (for example, if the above motion is expected in the $3$-dimensional space), see~\Cref{prop: f-n fibration}. In particular, the new approach to parametrized motion planning via the breaking and reassembling type of motion \emph{does not} reduce the complexity of this problem. 

We show in~\Cref{subsec: sphere bundle} that a similar phenomenon as above is observed when the universal configuration space of a system is modeled by an oriented sphere bundle $\dot\xi\colon\dot E\to B$ (coming from a vector bundle) over a sufficiently low-dimensional, ``nice'' base $B$, that admits a section. This is done by studying, in the spirit of~\cite{F-W} and~\cite{F-P-sphere-bundle}, the sequential parametrized distributional complexities of these sphere bundles. Our main result of this subsection (\Cref{prop: sphere bundle}) gives the cohomological lower bound 
\[
\dTC_r[\dot\xi\colon\dot E\to B]\ge \mathfrak{h}(\mathfrak{e}(\ddot\xi))+r-1,
\]
where $\ddot\xi\colon\ddot E\to \dot E$ is the sphere bundle corresponding to $\dot\xi$, $\mathfrak{e}(\ddot\xi)$ is the \emph{Euler class} of $\ddot\xi$, and $\mathfrak{h}(\mathfrak{e}(\ddot\xi))$ is the \emph{height} of this class. The above bound agrees with the corresponding lower bound to $\TC_r[\dot\xi\colon\dot E\to B]$. This computation is performed using the Leray--Hirsch description~\cite{Sp},~\cite{Hatcher} of the cohomology ring of the space $\dot E^r_B$ by proceeding along the lines of~\cite{F-P-sphere-bundle},~\cite{F-W}. Our examples in~\Cref{sec: ex and comput} illustrate that, just like their classical counterparts, sequential parametrized distributional complexities of fiber bundles can be as large as desired, and that the complexity of planning a sequential distributed motion on a system subject to external parameters can be arbitrarily more challenging than planning its non-parametrized sequential distributed motion, see~\Cref{rem: different from fiber} and~\Cref{rem: arbitrarily large}.

\Cref{ex: temporary}, which gives principal bundles satisfying~\eqref{eq: different}, uses the difference in the $r$-th sequential topological and distributional complexities of $\R P^3$ (and generally, of all real projective spaces) found in~\cite{D-J},~\cite{K-W},~\cite{Jau1}. To produce ``genuinely new'' examples of fibrations satisfying~\eqref{eq: different}, we introduce as a tool in~\Cref{sec: equi dTC} the notion of \emph{sequential equivariant distributional complexity} for spaces endowed with continuous group actions (this invariant also has interpretations in terms of motion planning for advanced robotics systems whose configuration spaces have symmetries, see~\Cref{rmk: equi TC motion planning} for an analogy). 

For this, we first study the distributional analogue of the \emph{equivariant sectional category} of Colman and Grant~\cite{EqTC}, which is revisited in~\Cref{subsec: equi sect cat}. Then the $r$-th sequential equivariant distributional complexity of a $G$-space $X$, denoted $\dTC_{G,r}(X)$, is defined in~\Cref{sec: equi dTC} as the \emph{equivariant distributional sectional category} of the $G$-fibration $\pi_r^X\colon X^{[0,1]} \to X^r$ that evaluates each path at $t_i=\tfrac{i-1}{r-1}$ for $1\le i \le r$. In~\Cref{subsec: eq dtc of products of spheres}, we study the invariants $\dTC_{\Z_2,r}$ and $\TC_{\Z_2,r}$ for some $\Z_2$-actions on products of spheres, where the latter is the \emph{sequential equivariant topological complexity} defined by Bayeh and Sarkar in~\cite{B-S} (see also~\cite{EqTC}), and observe that they coincide in various situations. However, it turns out that the equivariant distributional invariants are different from their classical counterparts in general. Indeed, we define an action of the special orthogonal group $SO(n)$ on $\R P^n$ for each $n$ and obtain in~\Cref{prop: SOn eq TC of RPn} the startling computation  
\[
\dTC_{SO(n)}(\R P^n)=1 < \TC_{SO(n)}(\R P^n) \ \text{ for all } \ n\ge 2.
\] 
Next, we explain that, almost as in the classical setting~\cite{F-O}, the sequential parametrized distributional complexities of some fibrations are intimately related to the sequential equivariant distributional complexities of their respective fibers, hence justifying the study of the latter notion in this paper. More precisely, for any $G$-space $F$ and principal $G$-bundle $q\colon P\to B$, we look at the $F$-\emph{associate bundle} of $q$, denoted $p\colon F\times_G \hspace{0.4mm}P\to P/G$, whose fiber is $F$ and structure group is $G$, and in analogy with a result of Farber and Oprea from~\cite{F-O}, we establish in~\Cref{thm: distributional analogue of Farber-Oprea} that
\[
\dTC_r[p\colon F\times_G \hspace{0.3mm}P\to P/G]\le\left(\dTC_{G,r}(F)+1\right)^2-1.
\]
This relationship, along with the results of~\Cref{subsec: equi dtc rpn}, finally gives us the ``genuinely new'' examples satisfying~\eqref{eq: different} as we take $G=SO(n)$ and $F=\R P^n$. This is explained in~\Cref{sec:new upper bound}, where we end this paper by proving that there are infinitely many non-principal bundles whose sequential parametrized complexity values in the distributional and the classical cases are different; in fact, arbitrarily far apart.

\subsection{Notations and conventions} All topological spaces considered in this paper are path-connected, and all rings are commutative with unity. We use the term ``maps'' for continuous functions and homomorphisms, ``sections'' for continuous right inverses in the category of topological spaces, and ``fibrations'' for Hurewicz fibrations. We denote the cup product of cohomology classes $x$ and $y$ by $xy$.

\section{Preliminaries and background material}\label{sec: prelims}
\subsection{Sectional category of fibrations}\label{subsec: secat}
The notion of sectional category was introduced by Schwarz in~\cite{Sch} as ``genus'', and then developed further by Berstein and Ganea in~\cite{B-G}, and subsequently by James in~\cite{James}. We recall that the \textit{sectional category} of a fibration $p\colon E \to B$, denoted $\secat(p)$, is the smallest integer $n$ such that $B$ can be covered by $n+1$ open sets $W_i$ over each of which there exists a partial section $s_i\colon W_i\to E$ of $p$.

When dealing with paracompact spaces, one can reformulate the definition of $\secat(p)$ as follows. Define
\[
\ast^k_B \hspace{0.5 mm}E = \left\{\sum_{i=1}^{k} \lambda_{i} e_{i} \hspace{1mm} \middle| \hspace{1mm} e_{i} \in E, \hspace{0.5mm} \sum_{i=1}^{k}\lambda_{i} = 1, \hspace{0.5mm} \lambda_{i} \geq 0, \hspace{0.5mm} p\left(e_{i}\right)= p\left(e_{j}\right)\right\}
\]
to be the iterated fiberwise join of $k$-copies of $E$ along $p$, and similarly, define the iterated fiberwise join of $k$-copies of $p$, denoted $\ast^k_B \hspace{0.5mm}p \colon \ast^k_B \hspace{0.5 mm}E \to B$, such that
\[
\ast^k_B \hspace{0.5mm}p \left(\sum_{i=1}^{k} \lambda_{i} e_{i}\right) = p\left(e_i \right)
\]
for any $i$ with $\lambda_{i} > 0$. Then we have the following well-known characterization of $\secat(p)$ due to Schwarz,~\cite{Sch}.
\begin{proposition}[\protect{\cite[Proposition 8.1]{James}}]\label{prop: sectional category}
     For a fibration $p\colon E\to B$, where $B$ is paracompact, $\secat(p)\le n$ if and only if the fibration $\ast^{n+1}_B \hspace{0.5mm}p \colon \ast^{n+1}_B \hspace{0.5 mm}E \to B$ admits a section.
\end{proposition}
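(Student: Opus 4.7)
The plan is to prove both implications using the explicit formal-sum description of the fiberwise join, with paracompactness entering only in the forward direction to produce a partition of unity.

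For the \emph{forward direction}, I would start with an open cover $W_1, \ldots, W_{n+1}$ of $B$ together with partial sections $s_i\colon W_i\to E$ of $p$, guaranteed by the hypothesis $\secat(p)\le n$. Since $B$ is paracompact, I can choose a partition of unity $\{\lambda_i\}_{i=1}^{n+1}$ subordinate to this cover. I would then define a candidate section $\sigma\colon B\to \ast^{n+1}_B E$ by the formal sum
\[
\sigma(b) = \sum_{i=1}^{n+1} \lambda_i(b)\, s_i(b),
\]
interpreting the $i$-th summand as a dummy term whenever $b\notin W_i$, which is consistent because $\lambda_i(b)=0$ there. Because $p(s_i(b))=b$ for every $i$ with $\lambda_i(b)>0$, the point $\sigma(b)$ genuinely lies in the fiber of $\ast^{n+1}_B p$ over $b$. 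The continuity of $\sigma$ follows from the definition of the join topology combined with the continuity of each $\lambda_i$ and the local continuity of each $s_i$ on $W_i$, so $\sigma$ is the required section.

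For the \emph{reverse direction}, I would suppose that $\sigma\colon B\to\ast^{n+1}_B E$ is a section of $\ast^{n+1}_B p$ and exploit the intrinsic coordinate structure of the join. For each index $i$, the $i$-th barycentric coordinate defines a continuous function $\Lambda_i\colon \ast^{n+1}_B E\to [0,1]$, and on the open set $U_i\subseteq \ast^{n+1}_B E$ where $\Lambda_i>0$, the $i$-th point coordinate yields a continuous map $\pi_i\colon U_i\to E$ satisfying $p\circ \pi_i = \ast^{n+1}_B p|_{U_i}$. Setting
\[
W_i = \sigma^{-1}(U_i) = \{\, b\in B : (\Lambda_i\circ \sigma)(b) > 0\,\}
\]
produces open subsets of $B$ whose union is all of $B$ (because the barycentric coordinates of $\sigma(b)$ always sum to $1$), and $s_i := \pi_i \circ \sigma|_{W_i}$ is a local section of $p$ over $W_i$. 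This gives an open cover of $B$ by $n+1$ sets each admitting a partial section of $p$, so $\secat(p)\le n$.

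The main technical point I expect to require care is the reverse direction, specifically the continuity of the coordinate extraction $\pi_i$ on $U_i\subseteq \ast^{n+1}_B E$. In the usual formal-sum model of the (fiberwise) join, the point coordinates are only defined where the corresponding barycentric coordinate is strictly positive, and one must verify that with the quotient topology arising from the join construction, $\pi_i$ is continuous precisely on this open locus. This is the standard subtlety that makes the iterated fiberwise join behave well, and once it is in place the argument is formal. Paracompactness of $B$ is not needed here, only in the forward direction where it supplies the partition of unity; for this reason I would state both halves carefully and note that the reverse implication holds for arbitrary $B$.
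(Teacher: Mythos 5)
The paper itself does not prove this proposition; it simply cites it as \cite[Proposition 8.1]{James}. Your proof is the standard Schwarz/James argument and is correct: paracompactness yields a partition of unity for the forward direction, and the barycentric/point coordinate functions on the fiberwise join give the reverse. You correctly flag the one genuine subtlety, namely that the point-coordinate maps $\pi_i$ are continuous on $\{\Lambda_i > 0\}$ only if the join carries the Milnor-type (strong) topology rather than the naive quotient topology inherited from $E^{n+1}\times\Delta^n$; that is indeed the topology one must use for this characterization, and with it both continuity claims (for $\sigma$ in the forward direction and for $s_i$ in the reverse) go through exactly as you describe. Your observation that the reverse implication requires no paracompactness is also correct and worth keeping.
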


We record some well-known properties of sectional category.
\begin{enumerate}
    \item If two fibrations $p\colon E\to B$ and $p'\colon E'\to B'$ are fiberwise homotopy equivalent, then $\secat(p)=\secat(p')$.\label{(1)}
    \item For any fibration $p\colon E\to B$ and map $f\colon A\to B$, $\secat(f^*p)\le\secat(p)$, where $f^*p$ denotes the pullback of $p$ with respect to $f$.\label{(2)}
    \item If a fibration $p\colon E\to B$ factors through another fibration $p'\colon E'\to B$, then $\secat(p')\le \secat(p)$.\label{(3)}
\end{enumerate}

Consider a pointed space $(X,x_0)$. Let $I=[0,1]$  and $X^I$ be the path space of $X$ endowed with the compact-open topology. Let $P_0(X)=\{\phi\in X^I\mid\phi(1)=x_0\}$ be the based path space. Define the evaluation fibration $p^X\colon P_0(X)\to X$ such that $p^X(\phi)=\phi(0)$. Then the \emph{Lusternik--Schnirelmann category} of $X$, denoted $\cat(X)$, is defined as $\cat(X):=\secat(p^X)$, see, for example,~\cite{James}. We refer to~\cite{CLOT} for a comprehensive survey on Lusternik--Schnirelmann category and related homotopy invariants.

For each integer $r\ge 2$, let us define the path fibration $\pi_r^X\colon X^I\to X^r$ such that
\begin{equation}\label{eq: sequential free path fibration}
    \pi_r^X(\phi)=\left(\phi(0),\phi\left(\frac{1}{r-1}\right),\cdots,\phi\left(\frac{r-2}{r-1}\right),\phi(1)\right).
\end{equation}
Then for any $r\ge 2$, the $r$\emph{-th sequential topological complexity} of $X$, denoted $\TC_r(X)$, is defined as $\TC_r(X):=\secat(\pi_r^X)$, see~\cite{F},~\cite{Rudyak}. Furthermore, a section of $\pi_r^X$ is called a \emph{sequential motion planning algorithm}. As described in the introduction, $\TC_r$ admits interpretations in terms of motion planning on robotic systems.

Since its inception, various versions and variants of (sequential) topological complexity have been studied, many of which were partly inspired by some advanced, more realistic motion planning problems. We now discuss one such variant that stems from the motion planning problem where external parameters are involved.

\subsection{Sequential parametrized topological complexity}\label{subsect: parametrized TC}

The notion of \emph{parametrized topological complexity} was introduced by Cohen, Farber, and Weinberger in~\cite{C-F-W} to study the motion planning problem in the presence of external variable conditions on the system (see~\Cref{intro: parametrized}). Recently, as a natural extension, the notion of \emph{sequential parametrized topological complexity} was introduced and studied by Farber and Paul in~\cite{F-P1}. We now briefly recall this notion. 

For a fibration $p\colon E\to B$ with fiber $F$, consider the space
\[E^I_B:=\{\gamma\in E^I \mid p\circ \gamma(t)=b ~\text{for some}~ b\in B ~\text{and for all}~ t\in[0,1] \}.\]
The fiber product corresponding to $p:E\to B$ is defined by \[E^r_B:=\{(e_1,\dots,e_r)\in E^r \mid p(e_i)=p(e_j) ~\text{ for }~ 1\leq i,j\leq r\}.\]

\begin{remark}\label{rmk: cw structure on erb}
\rm{When $p\colon E\to B$ is a fibration of CW complexes, then for any $r\ge 2$, the space $E^r_B$ is obtained as follows. First, $E^2_B$ is obtained as the pullback space of $p$ along $p$. By~\cite[Lemma~36]{Mather}, $E^2_B$ has the homotopy type of a CW complex. Then, $E^3_B$ is obtained from the pullback of $p$ along the fibration $p^*p\colon E^2_B\to E$, so that $E^3_B$ also has the homotopy of a CW complex for the same reason. Similarly, $E^r_B$ is obtained using $E^{r-1}_B$ for each $r\ge 4$, and it has the homotopy type of a CW complex. We also note that if $E$ and $B$ are finite CW complexes, then so is $E^r$ for each $r$, and hence, $E^r_B\subset E^r$ has the homotopy type of a finite CW complex. From now onwards, when $E$ and $B$ are (finite) CW complexes, we shall abuse terminology and simply say that $E^r_B$ \emph{is} a (finite) CW complex.}
\end{remark}

Similar to the map $\pi_r^X$ defined in~\eqref{eq: sequential free path fibration}, we define a map $\Pi_r^E\colon E^I_B\to E^r_B$ such that 
\begin{equation}\label{eq: parametrized endpoint map}
\Pi_r^E(\gamma)= \left(\gamma(0), \gamma\left(\frac{1}{r-1}\right),\dots,\gamma\left(\frac{r-2}{r-1}\right),\gamma(1)\right).  
\end{equation}
It follows from~\cite[Appendix]{PTCcolfree} that $\Pi_r^E$ is a fibration with fiber $\Omega F$. Here, $E^r_B$ can be viewed as the space of parametrized inputs of the system, and $E^I_B$ as the space of paths along which the system can traverse while respecting the external parameters. Then a section of $\Pi_r^E$ becomes a \emph{sequential parametrized motion planning algorithm}. 

\begin{definition}
\rm{For any $r\ge 2$, the $r$\emph{-th sequential parametrized topological complexity} of $p$, denoted $\TC_r[p\colon E\to B]$, is defined as $\TC_r[p\colon E\to B]:=\secat(\Pi_r^E)$.}
\end{definition}

In~\cite{F-P1},~\cite{F-P2}, and~\cite{F-P-sphere-bundle}, a theory for the sequential parametrized topological complexity was established, generalizing several results and computations from~\cite{C-F-W},~\cite{PTCcolfree},~\cite{F-W}, and~\cite{F-W2} on the original notion of parametrized topological complexity. We also refer to~\cite{GC} for an extension of this notion to fiberwise spaces.

\subsection{Sequential equivariant topological complexity}\label{subsec: equi sect cat}
The equivariant analogue of sectional category (\emph{cf.}~\Cref{subsec: secat}) was introduced by Colman and Grant in~\cite{EqTC}.

\begin{definition}[{\cite[Definition 4.1]{EqTC}}\label{def:eqsecat}]
\rm{Suppose $E$ and $B$ are $G$-spaces. Let $p \colon E\to B$ be a $G$-map. 
The \emph{equivariant sectional category} of $p$, denoted $\secat_G(p)$, is the smallest integer $k$ for which there exists a $G$-invariant open cover $\{U_1,\ldots, U_{k+1}\}$ of $B$ and $G$-maps $s_i \colon U_i \to E$ such that $p \circ s_i \simeq_G \iota_{U_i}$ for each $i$, where $\iota_{U_i} \colon U_i \hookrightarrow B$ is the inclusion.}
\end{definition}
In the above definition, the notation $\simeq_G$ denotes that $p\circ s_i$ and $i_{U_i}$ are homotopic via a homotopy that itself is a $G$-map.

Several basic properties of $\secat_G(p)$ have been studied by Arora and Daundkar in~\cite{A-D}. For example, the equivariant cohomological lower bound, the equivariant homotopy dimension-connectivity upper bound, the product inequality, etc., have been established.

The notion of sequential equivariant topological complexity was introduced by Bayeh and Sarkar in~\cite{B-S} as an extension of the notion of \emph{equivariant topological complexity} from~\cite{EqTC}. We now briefly recall its definition. 

Let $X$ be a $G$-space. Then the free path space $X^{I}$ is a $G$-space with the following action:
$$ 
    (g \alpha)(t) =  g\cdot \alpha(t) \ \text{ for } \ g\in G \ \text{ and } \ \alpha\in X^I.
$$  
Also, $X^r$ is a $G$-space with respect to the diagonal action. The fibration 
$\pi_r^X \colon X^I \to X^r$ described in~\eqref{eq: sequential free path fibration} is a $G$-fibration. This happens because $\Pi_r^X\colon X^I_B\to X^r_B$ is a $G$-fibration whenever $p\colon X\to B$ is a $G$-fibration (see~\cite[Proposition 4.2]{D-EqPTC}), so taking $B = \{\ast\}$ shows that $\pi^X_r$ is also a $G$-fibration. Then, the $r$\emph{-th sequential equivariant topological complexity} of a $G$-space $X$, denoted $\TC_{G,r}(X)$, is defined as $\TC_{G,r}(X):=\secat_G(\pi_r^X)$. It is easy to see that $\TC_r(X)\le\TC_{G,r}(X)$ for each $r\ge 2$. In general, this inequality can be strict!

\begin{remark}\label{rmk: equi TC motion planning}
\rm{The invariant $\TC_{G,r}(X)$ also has an interpretation in terms of the motion planning problem for any mechanical system whose configuration space $X$ exhibits symmetries encoded by a topological group $G$. In particular, sequential equivariant topological complexity gives a numerical measure of the difficulty in constructing sequential motion planning algorithms that preserve the symmetries of the corresponding configuration spaces.}
\end{remark}

In~\Cref{subsec: eq dtc of products of spheres}, we will estimate (and determine) $\TC_{\Z_2,r}$ of products of spheres under various $\Z_2$-actions.

\section{Distributional sectional category of fibrations}\label{sect: dsecat}
The notion of distributional sectional category was introduced implicitly by Dranishnikov and Jauhari in~\cite{D-J}, and then developed by Jauhari in~\cite{Jau1}. Independently, a related notion of ``analog sectional category” was studied by Knudsen and Weinberger in~\cite{K-W}. To understand this notion, we recall some notations.

For any metric space $X$, let $\B(X)$ denote the set of probability measures on $X$, and for any $k\ge 1$, let $\B_k(X)$ be the space of measures of support at most $k$, endowed with the Lévy--Prokhorov metric topology~\cite{Pr} (see also~\cite[Section 3.1]{D-J}). In the literature, $\B_k(X)$ is also studied with a more general ``quotient topology'' that comes from the topology of the \emph{symmetric join} of $k$-copies of $X$, see, for example,~\cite{K-K} and~\cite{K-W}. In this paper, we express a measure $\mu\in \B_k(X)$ as a formal \textit{unordered} linear combination
\[
\mu=\sum_{i\in K}a_ix_i,
\]
where $K$ is an indexing set with $|K|\le k$, $x_i\in X$, and $a_i\ge 0$ with $\sum_{i\in K}a_i=1$. The support of the measure $\mu$, denoted $\supp(\mu)$, is the set $\{x_i\in X\mid a_i>0 \text{ for }i\in K\}$. When the size restriction on $\supp(\mu)$ is clear, we shall omit the subscript $i\in K$ in the summation.

Let $p \colon E\to B$ be a map, where $E$ is a metric space. For each $k \ge 1$, let us define a space
\[
E_k(p) = \bigcup_{x \in B}\mathcal{B}_k  (p^{-1}(x)) = \left\{ \mu \in \B_k(E) \hspace{1mm} \middle| \hspace{1mm} \supp(\mu) \subset p^{-1}(x), \hspace{1mm} x \in B \right\}
\]
and a map $\mathcal{B}_k(p) \colon E_k(p) \to B$ such that
\[
\mathcal{B}_k(p)(\mu) = x \ \text{ whenever } \ \mu \in \mathcal{B}_k (p^{-1}(x)). 
\]
This map is a fibration if $p$ is a fibration, see~\cite[Proposition 5.1]{D-J}. We now recall the definition of the following invariant of probabilistic flavor.

\begin{definition}[\protect{\cite[Definition 5.2]{Jau1}}]\label{def: dsecat}
    \rm{Let $p\colon E\to B$ be a fibration, where $E$ is a metric space. The \emph{distributional sectional category} of $p$, denoted $\dsct(p)$, is defined as the smallest integer $n$ such that the fibration $\mathcal{B}_{n+1}(p) \colon E_{n+1}(p) \to B$ admits a section.}
\end{definition} 

\begin{remark}\label{rmk: dsecat vs secat}
\rm{When $B$ is paracompact, this definition is the distributional analogue of the equivalent definition of $\secat$ that follows from Proposition~\ref{prop: sectional category}. We also see from Proposition~\ref{prop: sectional category} that $\dsct(p)\le \secat(p)$ for any fibration $p\colon E\to B$. This is because for any fibration $p\colon E\to B$ and integer $k\ge 1$, there is a map $\ast^kE\to \B_k(E)$ from the $k$-th iterated join of $E$ to the space of probability measures of support at most $k$ on $E$, which descends to a map $\ast^k_B E\to E_k(p)$ between the respective fiberwise subspaces. For technical details about these maps, we refer to~\cite[Theorem 1.1]{K-K},~\cite[Section 2]{K-W}, and~\cite[Section 8]{Jau1}.}
\end{remark}

The invariant $\dsecat$ satisfies most of the nice properties satisfied by $\secat$ as mentioned in~\Cref{subsec: secat}. An analogue of  Property~\ref{(1)} for $\dsecat$ (i.e., the homotopy invariance of $\dsecat$) was shown in~\cite[Proposition 5.3]{Jau1} as follows. 

\begin{proposition}\label{prop: homo inv of dsecat}
    If $p\colon E\to B$ and $p'\colon E'\to B'$ are fibrations in the commutative diagram
\[
 \begin{tikzcd}[every arrow/.append style={shift left}]
E \arrow{r}{f} \arrow[swap]{d}{p}
& 
E'  
\arrow{d}{p'}  
\\
B \arrow{r}{g}
& 
B',
\end{tikzcd}
\]
where $f$ and $g$ are homotopy equivalences, then $\dsecat(p)=\dsecat(p')$.
\end{proposition}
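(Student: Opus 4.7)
The plan is to deduce both inequalities $\dsecat(p)\le \dsecat(p')$ and $\dsecat(p')\le \dsecat(p)$ from the functoriality of the pushforward of measures and its compatibility with the fibrations $\B_k(-)$. First I would observe that $f$ induces, for each $k\ge 1$, a natural map $\tilde f\colon E_k(p)\to E'_k(p')$ given by $\tilde f\left(\sum a_i e_i\right)=\sum a_i f(e_i)$: the relation $p'\circ f=g\circ p$ ensures that a measure supported in a single fiber of $p$ pushes forward to one supported in a single fiber of $p'$. This yields a commutative square with $\B_k(p')\circ \tilde f=g\circ \B_k(p)$. I would then factor $f$ through the pullback as $E\xrightarrow{h}g^*E'\xrightarrow{i}E'$, where $h(e)=(p(e),f(e))$ lives over $\mathrm{id}_B$ and $i$ is the canonical map over $g$. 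This reduces the proof to two cases: a fiberwise case (the map $h$ over $\mathrm{id}_B$) and a pullback case (the map $i$ over the homotopy equivalence $g$).

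For the fiberwise case, I would show that if $\phi\colon E_1\to E_2$ is a fiberwise homotopy equivalence of fibrations $p_1,p_2$ over a common base $B$, then its pushforward $\tilde\phi\colon E_{1,k}(p_1)\to E_{2,k}(p_2)$ is itself a fiberwise homotopy equivalence over $B$. This follows because pushforward is functorial and carries a fiberwise homotopy $H_t$ to the fiberwise homotopy $\mu\mapsto (H_t)_*\mu$. Sections then transfer along fiberwise homotopy equivalences by direct composition: if $\tilde\psi$ is a fiberwise homotopy inverse of $\tilde\phi$ and $s$ is a section of $\B_k(p_2)$, then $\tilde\psi\circ s$ is a section of $\B_k(p_1)$, because $\tilde\psi$ is a map over $\mathrm{id}_B$.

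For the pullback case, I would use the canonical identification $E_k(g^*p')\cong g^*E'_k(p')$, i.e., $\B_k(g^*p')$ is the pullback of $\B_k(p')$ along $g$. One direction is immediate: a global section $s'$ of $\B_k(p')$ pulls back to the global section $b\mapsto (b,s'(g(b)))$ of $\B_k(g^*p')$. For the other direction, let $\bar g\colon B'\to B$ be a homotopy inverse of $g$ with homotopy $H\colon B'\times I\to B'$ realizing $g\bar g\simeq \mathrm{id}_{B'}$. Given a section $\sigma$ of $\B_k(g^*p')$, the composition $i\circ\sigma\circ\bar g\colon B'\to E'_k(p')$ covers $g\bar g$, and applying the homotopy lifting property for the fibration $\B_k(p')$ to $H$ produces a section of $\B_k(p')$ at time $t=1$.

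The main obstacle is verifying that the map $h\colon E\to g^*E'$ in the factorization is a genuine fiberwise homotopy equivalence over $B$, not merely a homotopy equivalence of total spaces. Since $g$ is a homotopy equivalence, the canonical map $i\colon g^*E'\to E'$ is a homotopy equivalence of total spaces (a classical fact about Hurewicz fibrations obtained via the homotopy lifting property); combined with the hypothesis that $f=i\circ h$ is a homotopy equivalence, two-out-of-three for homotopy equivalences gives that $h$ is a homotopy equivalence. Dold's theorem on fiberwise homotopy equivalences then upgrades $h$ to a fiberwise homotopy equivalence over $B$ under a numerability assumption on the base, which holds in the CW or paracompact settings implicit in the paper.
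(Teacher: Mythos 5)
Your proposal is correct; the only thing to flag is that this paper does not include its own proof of Proposition 3.4 --- it cites \cite[Proposition 5.3]{Jau1} --- so a literal line-by-line comparison is not possible. The checks below are against the mathematics itself and against the toolkit the present paper builds up around this statement (Propositions 3.5--3.7).

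Your factorization $f = i\circ h$ through the pullback $g^*E'$ is the right decomposition, and both halves work as you describe. The pullback half is exactly what the paper establishes inside the proof of Proposition 3.5: the identification $E'_{k}(g^*p')\cong g^*\bigl(E'_{k}(p')\bigr)$ is verified there, and from this, one inequality is immediate while the other needs a homotopy inverse of $g$ plus the homotopy lifting property for the fibration $\mathcal{B}_k(p')$ (which is a fibration by \cite[Proposition 5.1]{D-J}); you invoke exactly that. The fiberwise half relies on two observations you make: the pushforward $\mathcal{B}_k(-)$ is functorial and carries a fiberwise homotopy over $\mathrm{id}_B$ to a fiberwise homotopy over $\mathrm{id}_B$ (so that a fiberwise homotopy equivalence stays one after applying $\mathcal{B}_k(-)$), and a fiberwise map over $\mathrm{id}_B$ transports sections by simple postcomposition. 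Both are sound, though strictly speaking you only need $\tilde\phi$ and $\tilde\psi$ to be fiberwise over $\mathrm{id}_B$ to move sections both ways; the full strength of ``fiberwise homotopy equivalence'' is not required for the section-transfer step.

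The point you single out as the main obstacle --- upgrading $h$ to a fiberwise homotopy equivalence --- is indeed the crux, and your chain of reasoning (the canonical map $i\colon g^*E'\to E'$ over a homotopy equivalence of bases is itself a homotopy equivalence of total spaces; two-out-of-three gives that $h$ is a homotopy equivalence; then Dold's theorem upgrades the fiberwise map $h$ over $\mathrm{id}_B$ to a fiberwise homotopy equivalence) is a standard and valid route in the paracompact/CW setting the paper works in. Two small technical points are worth stating explicitly if this were to be written out in full. First, Dold's theorem is applied here to a fiberwise map that is an ordinary homotopy equivalence of total spaces; passing to ``homotopy equivalence on each fiber'' goes via the five lemma on the long exact sequences of $g^*p'$ and $p$ (with $h$ inducing the identity on $\pi_*(B)$) and uses that the fibers have CW homotopy type. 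Second, continuity of the pushforward $\mu\mapsto \phi_*\mu$ on $\mathcal{B}_k$ in the L\'evy--Prokhorov topology --- and its joint continuity in $(\mu,t)$ needed to push forward a homotopy --- is used implicitly and should be cited from \cite{D-J}/\cite{Jau1}. Neither of these is a gap in the argument, only assumptions that sit quietly in the background and match the paper's standing conventions (metric spaces, CW homotopy type, paracompact bases). An alternative route, closer in spirit to the paper's own toolbox, would be to construct a map $\bar f\colon E'\to E$ covering a homotopy inverse $\bar g$ of $g$ via the lifting property of $p$, and then apply Proposition 3.7 in both directions; your Dold-based argument packages the same content and is arguably cleaner.
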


Here, we prove some other natural properties of $\dsecat$ (the next two also appear in~\cite{J-O}).

\begin{proposition}\label{prop: pullback ineq dsecat}
Suppose the following diagram is a pullback  
\[\begin{tikzcd}
    E' \arrow[r, ""] \arrow[d, "p'"'] & E \arrow[d, "p"] \\
    B' \arrow[r, "f"]                  & B.       \end{tikzcd}\]
Then  $\dsct(p')\leq \dsct(p)$.  
\end{proposition}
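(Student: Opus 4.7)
The plan is to show that the measure--theoretic fiberwise construction $\mathcal{B}_{k}(\cdot)$ is compatible with pullbacks, and then deduce the inequality from the universal property of pullback by lifting any section of $\mathcal{B}_{n+1}(p)$ to a section of $\mathcal{B}_{n+1}(p')$.

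First, let $\tilde f \colon E'\to E$ denote the canonical map making the given square commute, so that $E'=\{(b',e)\in B'\times E\mid f(b')=p(e)\}$ and $\tilde f(b',e)=e$. The key observation is that $\tilde f$ restricts to a homeomorphism $\tilde f|_{(p')^{-1}(b')}\colon (p')^{-1}(b')\to p^{-1}(f(b'))$ for every $b'\in B'$, sending $(b',e)\mapsto e$. I would then set $n=\dsct(p)$ and build a candidate pullback square
\[
\begin{tikzcd}
    E'_{n+1}(p') \arrow[r, "F"] \arrow[d, "\mathcal{B}_{n+1}(p')"'] & E_{n+1}(p) \arrow[d, "\mathcal{B}_{n+1}(p)"] \\
    B' \arrow[r, "f"]                  & B,
\end{tikzcd}
\]
where $F$ is the pushforward induced by $\tilde f$, explicitly $F\bigl(\sum_i a_i(b',e_i)\bigr)=\sum_i a_i e_i$. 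Commutativity of this square is immediate from the definitions.

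Next, I would verify that this square is in fact a pullback in $\mathbf{Top}$. The candidate comparison map sends $\mu'\in E'_{n+1}(p')$ with $\supp(\mu')\subset (p')^{-1}(b')$ to the pair $(b',F(\mu'))\in B'\times_B E_{n+1}(p)$, and the inverse map sends $(b',\mu)$ with $\mu=\sum a_i e_i$ supported on $p^{-1}(f(b'))$ to $\sum a_i (b',e_i)$. Since $\tilde f$ is a fiberwise homeomorphism, these set--theoretic inverses are continuous in the L\'evy--Prokhorov topology: the pushforward by a continuous map is continuous on measure spaces, and the inverse is obtained from the continuous section $e\mapsto (b',e)$ of $\tilde f$ on each fiber, with the $b'$-coordinate coming from $\mathcal{B}_{n+1}(p')$.

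With the pullback square established, the proof concludes by the universal property. Let $s\colon B\to E_{n+1}(p)$ be a section of $\mathcal{B}_{n+1}(p)$, which exists because $\dsct(p)=n$. Then the pair of maps $\mathrm{id}_{B'}\colon B'\to B'$ and $s\circ f\colon B'\to E_{n+1}(p)$ agree on $B$, so they induce a unique map $s'\colon B'\to E'_{n+1}(p')$ with $\mathcal{B}_{n+1}(p')\circ s'=\mathrm{id}_{B'}$, that is, a section of $\mathcal{B}_{n+1}(p')$. Hence $\dsct(p')\le n=\dsct(p)$.

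The main technical obstacle is the continuity of the comparison map in the L\'evy--Prokhorov topology; this should follow without difficulty from the continuity of pushforward along $\tilde f$ and its fiberwise inverse, but one may prefer to invoke the analogous behavior of symmetric joins used in~\cite[Section 8]{Jau1} and~\cite{K-K} to streamline this verification.
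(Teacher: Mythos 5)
Your proof is correct and follows essentially the same route as the paper's: both identify $\mathcal{B}_{n+1}(p')$ as the pullback of $\mathcal{B}_{n+1}(p)$ along $f$ (using the fiberwise homeomorphism $(p')^{-1}(b')\cong p^{-1}(f(b'))$) and then invoke the universal property of the pullback to transport a section of $\mathcal{B}_{n+1}(p)$ to a section of $\mathcal{B}_{n+1}(p')$. Your write-up is somewhat more explicit about the comparison map and the continuity issue in the L\'evy--Prokhorov topology, which the paper's proof glosses over, but the underlying argument is identical.
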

\begin{proof}
Let $\dsecat(p)=n$. Note that we have a fibration $\mathcal{B}_{n+1}(p)\colon E_{n+1}(p)\to B$. Then we have the following description of the pullback of  $\mathcal{B}_{n+1}(p)$ along $f\colon B'\to B$:
\begin{align*}
f^*E_{n+1}(p)&=\{(b',\mu)\in B'\times E_{n+1}(p) \mid \supp(\mu)\subseteq p^{-1}(f(b')) \text{ for some } b'\in B'\}    \\
&= \bigcup_{b'\in B'}\B_{n+1}(p^{-1}(f(b')))\\
&\cong \bigcup_{b'\in B'}\B_{n+1}((p')^{-1}(b')) \ \text{ (since $(p')^{-1}(b')\cong p^{-1}(f(b'))$)}\\
&=E'_{n+1}(p').
\end{align*}
Therefore, the fibration $\B_{n+1}(p')$ is the pullback of the fibration $\B_{n+1}(p)$ along the map $f$, with the pullback space being $E'_{n+1}(p')$. Suppose $s\colon B\to E_{n+1}(p)$ is a section of $\mathcal{B}_{n+1}(p)$. Then, using  the universal property of pullbacks as shown in the following diagram:
 \[\begin{tikzcd}
 B'\arrow[ddr, bend right, "\text{Id}_{B'}"] \arrow[drr, bend left, "s\circ f"] \arrow[dr, dashed, " s'"] \\
 &
 E'_{n+1}(p') \arrow{d}{\mathcal{B}_{n+1}(p')} \arrow{r}{} 
 & E_{n+1}(p) \arrow{d}{\mathcal{B}_{n+1}(p)} 
 \\
 &
 B' \arrow{r}{f} 
 & 
 B,
\end{tikzcd}\]
 we obtain a section $s'\colon B'\to E'_{n+1}(p')$ of $\mathcal{B}_{n+1}(p')$. This implies $\dsecat(p')\le n$, hence giving us the desired inequality.
\end{proof}

\begin{proposition}\label{prop: composition ineq dsecat}
Suppose $p\colon E\to B$ and $p'\colon E'\to B$ are fibrations satisfying the following homotopy commutative diagram:
\[\begin{tikzcd}
E \arrow[dr, "p"'] \arrow{rr}{f}
& & E' \arrow{dl}{p'} \\
& B.
\end{tikzcd}\] 
Then $\dsct(p')\leq \dsct(p)$.    
\end{proposition}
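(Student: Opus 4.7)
The plan is to mimic the classical proof of property~\ref{(3)} for $\secat$ by constructing a continuous fiberwise map between the spaces of probability measures, induced by the pushforward of measures under $f$. This map will transfer sections of $\B_{n+1}(p)$ directly to sections of $\B_{n+1}(p')$.

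First, I would reduce to the case of strict commutativity. Since $p'$ is a fibration and $p'\circ f\simeq p$, the homotopy lifting property of $p'$ yields a map $f'\colon E\to E'$ homotopic to $f$ with $p'\circ f'=p$ on the nose. Since distributional sectional category is a homotopy-invariant notion (\Cref{prop: homo inv of dsecat}), there is no harm in replacing $f$ by $f'$, so I may henceforth assume $p=p'\circ f$ strictly.

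Next, set $n=\dsct(p)$ and define a pushforward map $f_{\ast}\colon E_{n+1}(p)\to E'_{n+1}(p')$ by
\[
f_{\ast}\!\left(\sum a_i e_i\right)=\sum a_i\, f(e_i),
\]
where coefficients at coincident images in $E'$ are combined. This map is continuous with respect to the Lévy--Prokhorov metric topology (a standard property of the pushforward of finitely supported probability measures under continuous maps). Moreover, if $\supp(\mu)\subseteq p^{-1}(b)$, then $\supp(f_{\ast}\mu)\subseteq f(p^{-1}(b))\subseteq (p')^{-1}(b)$, so indeed $f_{\ast}(\mu)\in E'_{n+1}(p')$, and by construction $\B_{n+1}(p')\circ f_{\ast}=\B_{n+1}(p)$.

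Finally, given a section $s\colon B\to E_{n+1}(p)$ of $\B_{n+1}(p)$, which exists since $\dsct(p)=n$, the composite $f_{\ast}\circ s\colon B\to E'_{n+1}(p')$ is a section of $\B_{n+1}(p')$, yielding $\dsct(p')\le n=\dsct(p)$. The main technical point to be careful about is the continuity of $f_{\ast}$ in the Lévy--Prokhorov topology on the fiberwise subspaces $E_{n+1}(p)$ and $E'_{n+1}(p')$; this is essentially routine but worth a brief remark, particularly because these subspaces are carved out fiberwise and inherit the subspace topology from $\B_{n+1}(E)$ and $\B_{n+1}(E')$ respectively.
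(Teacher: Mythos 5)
Your proof is correct and uses the same core idea as the paper, namely pushing forward a section of $\B_{n+1}(p)$ along the induced map $f_{\ast}$ on measure spaces. The difference is in where the homotopy commutativity is dealt with: you straighten $f$ first via the HLP of $p'$ so that $p'\circ f = p$ strictly (which guarantees that $f_{\ast}$ actually lands in $E'_{n+1}(p')$ and that $f_{\ast}\circ s$ is a genuine section), while the paper keeps $f$ as given, observes that $f_{\ast}\circ s$ is only a homotopy section of $\B_{n+1}(p')$, and then uses the fact that $\B_{n+1}(p')$ is a fibration to upgrade this to a genuine section. Your version is arguably the cleaner order of operations, since without the straightening step the containment $\supp(f_{\ast}\mu)\subseteq (p')^{-1}(b')$ is not automatic. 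One small remark: the appeal to \Cref{prop: homo inv of dsecat} in your reduction step is superfluous --- the conclusion involves only $p$ and $p'$, not $f$, so once the HLP supplies any $f'$ with $p'\circ f'=p$ you may simply use it; the homotopy $f'\simeq f$ plays no role.
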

\begin{proof}
Let $\dsecat(p)=n$. Note that we have the following homotopy commutative diagram: 
\[\begin{tikzcd}
E_{n+1}(p) \arrow[dr, "\mathcal{B}_{n+1}(p)"'] \arrow{rr}{f_{*}}
& & E'_{n+1}(p) \arrow{dl}{\mathcal{B}_{n+1}(p')} \\
& B,
\end{tikzcd}\] 
where $f_*$ is defined using the functoriality of $\mathcal{B}_{n+1}$. Suppose $s\colon B\to E_{n+1}(p)$ is a section of $\mathcal{B}_{n+1}(p)$. Then $f_*\circ s$ defines a homotopy section of $\mathcal{B}_{n+1}(p')$. Since $\mathcal{B}_{n+1}(p')$ is a fibration, we obtain a section of $\mathcal{B}_{n+1}(p')$. Thus, $\dsecat(p')\le n$.
\end{proof}

Two particular distributional invariants of interest, namely the distributional category and sequential distributional complexity, are recovered from the definition of $\dsecat$ in precisely the same way as the classical invariants LS-category and sequential topological complexity are recovered from the alternative definition of $\secat$ (\emph{cf.}~\Cref{prop: sectional category}).

Recall the evaluation fibration $p^X\colon P_0(X)\to X$ from~\Cref{subsec: secat}. Then the \emph{distributional category} of $X$, denoted $\dcat(X)$, is defined as $\dcat(X):=\dsecat(p^X)$, see~\cite{D-J},~\cite{K-W}. 

Recall also the path fibration $\pi_r^X\colon X^I\to X$ for each $r\ge 2$ from~\eqref{eq: sequential free path fibration}. Then for each $r\ge 2$, the $r$\emph{-th sequential distributional complexity} of $X$, denoted $\dTC_r(X)$, is defined as $\dTC_r(X):=\dsecat(\pi_r^X)$, see~\cite{Jau1},~\cite{K-W} (and also~\cite{D-J} for the case $r=2$). 

Like $\TC_r$, the invariant $\dTC_r$ also has interpretations in terms of motion planning. We refer to the introduction for that and for the motivation behind this invariant.

For any metric space $X$, the distributional invariants satisfy the following properties~(\cite{Jau1}):
\begin{enumerate}
    \item $\dcat(X)\le\cat(X)$ and $\dTC_r(X)\le\TC_r(X)$ for each $r\ge 2$.\label{one}
    \item $\dcat(X^{r-1})\le\dTC_r(X)\le\dcat(X^r)$ for each $r\ge 2$. The lower bound is attained when $X$ is a CW $H$-space (in particular, a compact Lie group).\label{two}
    \item $\dcat(\R P^n)=1$ and $\dTC_r(\R P^n)\le \min\{2r+1,2^{r-1}-1\}$ for all $n\ge 1$ and $r\ge 2$, so that the inequalities in Property~\ref{one} can be strict.\label{three}
    \item $\dTC_r(X)=0$ for any $r\ge 2$ if and only if $X$ is contractible.\label{four}
\end{enumerate}

Distributional invariants are most certainly different from the classical ones. Besides $\R P^n$, they also differ on the classifying spaces of finite groups (see~\cite{K-W},~\cite{Dr},~\cite{K-W2}), and on lens spaces and projective product spaces (see~\cite{J-O}).

\section{Sequential parametrized distributional complexity}\label{sec: sequential parametrized distributional complexity}

In this section, we introduce and develop a distributional analogue of the notion of sequential parametrized topological complexity reviewed in~\Cref{subsect: parametrized TC}. We refer to~\Cref{intro: parametrized} for interpretations of this distributional invariant in terms of motion planning.

Let $p\colon E\to B$ be a fibration, where $E$ is a metric space. We recall from~\eqref{eq: parametrized endpoint map} that for $r\ge 2$, the fibration $\Pi_r^E\colon E^I_B\to E^r_B$ evaluates each path $\gamma\in E^I_B$ at the points $\tfrac{i-1}{r-1}$ for $1\le i \le r$.

\begin{definition}\label{def: main def}
\rm{For $r\ge 2$, the $r$-\emph{th sequential parametrized distributional complexity} of a fibration $p\colon E\to B$, denoted $\dTC_r[p\colon E\to B]$, is defined as the distributional sectional category of the fibration $\Pi_r^E$, i.e., $\dTC_r[p\colon E\to B]:=\dsct(\Pi_r^E)$.  }
\end{definition}

We note that the inequality $\dTC_r[p\colon E\to B]\le \TC_r[p\colon E\to B]$ holds if $B$ is paracompact (\emph{cf.}~\Cref{sect: dsecat}).

Let us now interpret $\dTC_r[p\colon E\to B]$ in terms of a navigation algorithm. For any $(e_1,\dots, e_r)\in E^r_B$, we define  $E^I_B(e_1,\dots,e_r)$ to be subspace of all paths $\gamma$ such that $\gamma(\tfrac{i-1}{r-1})=e_i$ for $1\le i\le r$, which lie in a fiber $p^{-1}(b)$, where $b=p(e_i)\in B$ for all $i$.
\begin{definition}\label{def: algorithm}
\rm{The $r$-\emph{th sequential parametrized $k$-distributed navigation algorithm} for a fibration $p\colon E\to B$ is a map 
$$
s\colon E^r_B \to \mathcal{B}_k(E^I_B)
$$
satisfying $s(e_1,\dots,e_r)\in \mathcal{B}_k(E^I_B(e_1,\dots,e_r))$ for all $(e_1,\dots, e_r)\in E^r_B$.}
\end{definition}
In other words, such an algorithm maps each tuple $(e_1,\dots,e_r)\in E^r_B$ to a probability measure of at most $k$ paths $\gamma_j\in E^I_B$, where for each $j$, $\gamma_j(\tfrac{i-1}{r-1})=e_i$ for all $1\le i\le r$. We note that in contrast, an $r$-th sequential parametrized motion planning algorithm (\emph{cf.}~\Cref{subsect: parametrized TC}) maps each $(e_1,\dots,e_r)$ to an \emph{ordered} linear combination of at most $k$ paths $\gamma_j$ satisfying the same evaluation property as above. 

It follows from~\cref{def: main def} that $\dTC_r[p\colon E\to B]$ is the smallest integer $n$ such that $p$ admits an $r$-th sequential parametrized $(n+1)$-distributed navigation algorithm.


\begin{remark}\label{rmk: dtc same as dptc}
\rm{Observe that if $B=\{\ast\}$, then $E^I_B=E^I$ and $E^r_B=E^r$, so that the fibration $\Pi^E_r$ coincides with the (non-parametrized) evaluation fibration $\pi_r^E\colon E^I\to E^r$ defined in~\eqref{eq: sequential free path fibration}. Hence, $\dTC_r[p\colon E\to \ast]$ is just the $r$-th sequential distributional complexity $\dTC_r(E)$.}
\end{remark}

So, the notion of sequential parametrized distributional complexity of fibrations generalizes the notion of sequential distributional complexity of spaces.

\subsection{Some standard homotopy-theoretic properties}\label{subsec: basic properties}
In this section, we prove some basic properties and inequalities for sequential parametrized distributional complexities.

We first recall the definition of fiber-preserving homotopy invariance.  

\begin{definition}\label{def: fib hteq}
\rm{Let $p\colon E\to B$ and $p'\colon E'\to B'$ be two fibrations. A \emph{fiber-preserving map} from $p$ to $p'$ is a pair of maps $f\colon E\to E'$ and $g\colon B\to B'$ satisfying $p'\circ f=g\circ p$, denoted $(f,g)\colon p \to p'$; a \emph{fiber-preserving homotopy} is a pair of maps $F\colon E\times I\to E'$ and $G\colon B\times I\to B'$ such that $p'\circ F=G\circ (p\times\text{Id}_I)$, denoted $(F,G)$; and fibrations $p$ and $p'$ are \emph{fiber-preserving homotopy equivalent} if there are fiber-preserving maps $(f,g)\colon p\to p'$ and $(f',g')\colon p'\to p$ such that $(f\circ f',g\circ g')$ is fiber-preserving homotopic to $(\text{Id}_{E'},\text{Id}_{B'})$ and $(f'\circ f,g'\circ g)$ is fiber-preserving homotopic to $(\text{Id}_{E},\text{Id}_{B})$.}
\end{definition}

We now establish the fiber-preserving homotopy invariance of the sequential parametrized distributional complexity along the lines of~\cite{MarkGrant}.

\begin{proposition}\label{prop: tc homo inv}
If $p\colon E\to B$ and $p'\colon E'\to B'$ are fibrations in the commutative diagram
\[
 \begin{tikzcd}[every arrow/.append style={shift left}]
E \arrow{r}{f} \arrow[swap]{d}{p}
& 
E'  
\arrow{d}{p'}  
\\
B \arrow{r}{g}
& 
B',
\end{tikzcd}
\]
where $f$ and $g$ are homotopy equivalences, then $\dTC_r[p\colon E\to B]=\dTC_r[p'\colon E'\to B'].$
\end{proposition}
\begin{proof}
First, we note that $(f,g)\colon p\to p'$ is a fiber-preserving map. Suppose $(F,G)$ is a fiber-preserving homotopy. Then for the fibrations $\Pi_r^E\times\text{Id}_I\colon E^I_B\times I\to E^r_B\times I$ and $\Pi_r^{E'}\colon (E')^I_{B'}\to (E')^r_{B'}$, we get a fiber-preserving homotopy $(A,B)$ induced by $(F,G)$. In particular, there are homotopy equivalences $a\colon E^I_B\to (E')^I_{B'}$ and $b\colon E^r_B\to (E')^r_{B'}$ such that $\Pi_r^{E'}\circ a=b\circ\Pi^E_r$ (we refer to~\cite[Proposition 2.6]{MarkGrant} for details). Since $a$ and $b$ are homotopy equivalences, we get by~\Cref{prop: homo inv of dsecat} that
\[
\dsecat(\Pi_r^{E'})=\dsecat\left(\Pi_r^{E'}\circ a\right) = \dsecat\left(b\circ \Pi_r^{E}\right)=\dsecat(\Pi_r^{E}).
\]
This completes the proof.
\end{proof}

\begin{proposition}\label{prop: fiber lower bound}
    Suppose $p \colon E \to B$ is a fibration and $B'\subseteq B$. 
    If $E' = p^{-1}(B')$ and $p' \colon E' \to B'$ is the  restricted fibration, then 
    \begin{equation}\label{eq: fiber ineq}
     \dTC_r[p' \colon E' \to B'] \leq \dTC_r[p \colon E \to B].   
    \end{equation}
    In particular, if $B'=\{b\}$ and $F:=p^{-1}(b)$ is the fiber of $p$, then
    $$
        \dTC_r(F) \leq \dTC_r[p \colon E \to B].
    $$    
\end{proposition}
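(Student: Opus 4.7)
The plan is to deduce this inequality from the pullback inequality for distributional sectional category (Proposition~\ref{prop: pullback ineq dsecat}) applied to the parametrized endpoint fibrations $\Pi_r^E$ and $\Pi_r^{E'}$. More precisely, I want to exhibit $\Pi_r^{E'}$ as the pullback of $\Pi_r^E$ along a suitable map, and then translate the resulting $\dsecat$ inequality back into the language of $\dTC_r$.

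First, I would observe the elementary inclusions $(E')^r_{B'} \subseteq E^r_B$ and $(E')^I_{B'} \subseteq E^I_B$, which both follow from $E' = p^{-1}(B')$: a tuple in $(E')^r_{B'}$ is a tuple in $E^r$ whose coordinates share a common $p$-image lying in $B'$, hence in $B$; and a path in $(E')^I_{B'}$ is a path in $E'$ along which $p' \circ \gamma$ is constant in $B'$, which is exactly a path along which $p \circ \gamma$ is constant in $B$. Next, I would assemble the square
\[
\begin{tikzcd}
(E')^I_{B'} \arrow[r, hook] \arrow[d, "\Pi_r^{E'}"'] & E^I_B \arrow[d, "\Pi_r^E"] \\
(E')^r_{B'} \arrow[r, hook] & E^r_B
\end{tikzcd}
\]
and verify it is a pullback: given $\gamma \in E^I_B$ with $\Pi_r^E(\gamma) = (e_1,\dots,e_r) \in (E')^r_{B'}$, the path $\gamma$ lies in a single fiber $p^{-1}(b)$, and since $e_1 \in E'$ forces $b = p(e_1) \in B'$, that fiber sits inside $E'$, so $\gamma \in (E')^I_{B'}$. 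This checks the universal property directly.

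Once the square is identified as a pullback, Proposition~\ref{prop: pullback ineq dsecat} yields $\dsecat(\Pi_r^{E'}) \leq \dsecat(\Pi_r^E)$, which is exactly~\eqref{eq: fiber ineq} in view of the identification $\dTC_r[p\colon E \to B] = \dsecat(\Pi_r^E)$ recorded just after Definition~\ref{def: algorithm}. The ``in particular'' statement follows by specializing to $B' = \{b\}$ and invoking Remark~\ref{rmk: dtc same as dptc}, which identifies $\dTC_r[p'\colon F \to \{b\}]$ with $\dTC_r(F)$.

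There is no substantive obstacle here; the only step that requires a small verification is the pullback check, and the rest is a direct appeal to the tools already built up in~\Cref{sect: dsecat} together with the identification of $\dTC_r$ as the distributional sectional category of $\Pi_r^E$. If one prefers a more concrete argument avoiding the pullback language, one could alternatively describe the correspondence at the level of distributed navigation algorithms: restrict an $r$-th sequential parametrized $(n+1)$-distributed navigation algorithm $s\colon E^r_B \to \B_{n+1}(E^I_B)$ to $(E')^r_{B'}$, and note that the resulting measure is automatically supported on paths lying in fibers of $p'$, giving an algorithm for $p'$ of the same size.
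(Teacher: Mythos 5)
Your primary argument (identify $\Pi_r^{E'}$ as the pullback of $\Pi_r^E$ along the inclusion $(E')^r_{B'}\hookrightarrow E^r_B$, then invoke \Cref{prop: pullback ineq dsecat}) is correct, and the key verification---that a path $\gamma\in E^I_B$ with $\Pi_r^E(\gamma)\in(E')^r_{B'}$ automatically lies in $(E')^I_{B'}$ because $\gamma$ is confined to the single fiber $p^{-1}(p(e_1))\subseteq E'$---is exactly the right point to check. The paper, however, takes the more hands-on route you mention at the end: starting from an optimal $k$-distributed navigation algorithm $s\colon E^r_B\to\B_k(E^I_B)$, it precomposes with the inclusion $i\colon(E')^r_{B'}\hookrightarrow E^r_B$ and argues directly that each path $\phi$ in the support of $s(e_1,\dots,e_r)$ lies in $(E')^I_{B'}(e_1,\dots,e_r)$, using the same fiber-confinement observation. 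The two proofs carry identical computational content---the pullback universal property check is precisely what makes the restricted algorithm well-defined---so the difference is one of packaging: your version is more modular and recycles the already-proven pullback lemma, while the paper's version is self-contained and keeps the argument at the level of navigation algorithms rather than $\dsecat$ of abstract fibrations. Both are clean; yours slightly shortens the proposition's proof at the small cost of having to set up and verify the pullback square.
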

\begin{proof}
Suppose $\dTC_r[p \colon E \to B]=k-1$, so that $s\colon E^r_B\to \mathcal{B}_k(E^I_B)$ is an $r$-th sequential parametrized $k$-distributed navigation algorithm for the fibration  $p\colon E\to B$. 
Define $s'=s\circ i\colon (E')^r_{B'}\to \mathcal{B}_k(E^I_B)$, where $i\colon (E')^r_{B'} \hookrightarrow{} E^r_B$ is the inclusion map. For any fixed $(e_1,\dots, e_r)\in (E')^r_{B'}$, since $s'(e_1,\dots,e_r)\in \mathcal{B}_{k}(E^I_{B})$, we can write 
\[
s'(e_1,\dots,e_r)=\sum\lambda_{\phi}\phi,
\]
where $\phi\in E^I_B(e_1,\dots,e_r)$, i.e., $\phi(\frac{i-1}{r-1})=e_{i}\in E'$ for $1\leq i\leq r$ and $p\circ \phi(t)=b$ for some $b\in B$, and for all $t\in I$. 
Note that $b=p\circ \phi(t)=p\circ \phi(0)=p(e_1)$ for each $t\in I$. But we have $p(e_1)=p'(e_1)\in B'$ by definition. This implies $b\in B'$ and therefore, $\phi(t)\in E'$ for all $t\in I$. This shows that $\phi\in (E')^I_{B'}(e_1,\dots,e_r)$, and so $s'(e_1,\dots,e_r)\in \mathcal{B}_{k}((E')^I_{B'}(e_1,\dots,e_r))$ as desired. Hence, $\dTC_r[p'\colon E'\to B']\le k-1$.

Now, if $B'=\{b\}$, then $E'=p^{-1}(b)$. Then the inequality $ \dTC_r(F) \leq \dTC_r[p \colon E \to B]$ follows from~\eqref{eq: fiber ineq} and~\Cref{rmk: dtc same as dptc}.
\end{proof}

\begin{corollary}\label{cor: rationally acyclic}
    If $p\colon E\to B$ is a fibration whose fiber $F$ is a finite CW complex that is not rationally acyclic, then $\dTC_r[p\colon E\to B]\ge r-1$.
\end{corollary}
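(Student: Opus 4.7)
The plan is to reduce the statement to a lower bound on the fiber, which is then established via a standard rational cup-length computation.

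By \Cref{prop: fiber lower bound}, we have $\dTC_r(F)\le \dTC_r[p\colon E\to B]$, so it suffices to prove $\dTC_r(F)\ge r-1$. Applying Property~(2) of sequential distributional complexity listed at the end of \Cref{sect: dsecat}, namely $\dcat(F^{r-1})\le \dTC_r(F)$, this further reduces to showing $\dcat(F^{r-1})\ge r-1$.

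To compute the rational cup length of $F^{r-1}$, choose a nonzero class $\alpha\in \tilde H^k(F;\Q)$ with $k>0$, which exists because $F$ is not rationally acyclic. For $i=1,\dots,r-1$, let $\alpha_i:=1\otimes\cdots\otimes\alpha\otimes\cdots\otimes 1\in H^k(F^{r-1};\Q)$ denote the external class with $\alpha$ in the $i$-th tensor slot. By the K\"unneth formula over $\Q$, the cup product
\[
\alpha_1\alpha_2\cdots\alpha_{r-1}\;=\;\alpha\otimes\alpha\otimes\cdots\otimes\alpha
\]
is a nonzero element of $H^{k(r-1)}(F^{r-1};\Q)$, so the rational cup length of $F^{r-1}$ is at least $r-1$. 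Invoking the distributional analogue of the classical cup-length lower bound for Lusternik--Schnirelmann category, established in~\cite{D-J} (see also~\cite{Jau1}), we conclude that $\dcat(F^{r-1})\ge r-1$, completing the argument.

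The only non-routine ingredient is the final appeal to the distributional cup-length lower bound for $\dcat$. An alternative that keeps the argument internal to this paper is to substitute the non-parametrized case ($B=\{\ast\}$) of the cohomological lower bound for $\dTC_r[p\colon E\to B]$ to be proved later as \Cref{prop: zcl bound}: applied to the rational zero-divisors $\alpha_1-\alpha_i\in H^k(F^r;\Q)$ for $i=2,\dots,r$, whose product expansion contains the uncancellable term $\alpha_2\cdots\alpha_r=1\otimes\alpha\otimes\cdots\otimes\alpha$, it directly yields $\dTC_r(F)\ge \zl_r(F;\Q)\ge r-1$.
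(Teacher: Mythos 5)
Your proposal is correct and follows the paper's own two-step strategy: first reduce to the fiber via \Cref{prop: fiber lower bound}, then conclude with a rational cohomological bound. The only difference lies in the second step: the paper simply cites~\cite[Proposition 7.1]{Jau1} (which asserts exactly that $H^d(X;\Q)\ne 0$ for some $d\ge 1$ forces $\dTC_r(X)\ge r-1$ for a finite CW complex $X$), while you unpack that input with an explicit cup-length computation---either by passing through $\dcat(F^{r-1})$ and the distributional cup-length lower bound for $\dcat$ from~\cite{D-J}, or, more economically, by specializing \Cref{prop: zcl bound} to $B=\{\ast\}$ and exhibiting the non-vanishing product $\prod_{i=2}^r(\alpha_1-\alpha_i)$ of rational zero-divisors, whose expansion retains the uncancellable K\"unneth term $\pm\,1\otimes\alpha\otimes\cdots\otimes\alpha$. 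Your alternative route is the more self-contained of the two, staying entirely within the paper's own machinery, but the underlying mathematics is the same as the cited result, so this should be regarded as an elaboration rather than a genuinely different approach.
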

\begin{proof}
    We have from~\Cref{prop: fiber lower bound} that $\dTC_r[p\colon E\to B]\ge \dTC_r(F)$. Since $F$ is not rationally acyclic, there exists $d\ge 1$ such that $H^d(F;\Q)\ne 0$. Then~\cite[Proposition 7.1]{Jau1} implies that $\dTC_r(F)\ge r-1$. 
\end{proof}

The lower bound from~\Cref{prop: fiber lower bound} can be attained for trivial fibrations, as shown in the following proposition.

\begin{proposition}
If $p\colon E\to B$ is a trivial fibration with fiber $F$, then we have for each $r\ge 2$ that $\dTC_r[p:E\to B]=\dTC_r(F)$.    
\end{proposition}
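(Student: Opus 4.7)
The plan is to verify both inequalities separately. The lower bound $\dTC_r[p\colon E\to B]\ge \dTC_r(F)$ is immediate from \Cref{prop: fiber lower bound} (taking $B'=\{b\}$ for any $b\in B$), so the real content of the proposition lies in establishing the reverse inequality $\dTC_r[p\colon E\to B]\le \dTC_r(F)$.

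For the upper bound, I would first unpack what ``trivial fibration'' means at the level of the fibration $\Pi_r^E$. Writing $E=B\times F$ with $p=\mathrm{pr}_B$, a path $\gamma\in E^I_B$ is a path in $B\times F$ whose projection to $B$ is constant, hence uniquely of the form $\gamma(t)=(b,\alpha(t))$ for some $b\in B$ and $\alpha\in F^I$. This yields a canonical homeomorphism $E^I_B\cong B\times F^I$, and analogously $E^r_B\cong B\times F^r$. Under these identifications, the parametrized endpoint map $\Pi_r^E$ becomes precisely $\mathrm{Id}_B\times \pi_r^F\colon B\times F^I\to B\times F^r$.

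The key observation is that this fibration fits into the pullback square
\[
\begin{tikzcd}
B\times F^I \arrow[r, "\mathrm{pr}_2"] \arrow[d, "\mathrm{Id}_B\times \pi_r^F"'] & F^I \arrow[d, "\pi_r^F"] \\
B\times F^r \arrow[r, "\mathrm{pr}_2"] & F^r,
\end{tikzcd}
\]
so that \Cref{prop: pullback ineq dsecat} gives $\dsecat(\mathrm{Id}_B\times \pi_r^F)\le \dsecat(\pi_r^F)$. Since the right-hand side equals $\dTC_r(F)$ by definition and the left-hand side equals $\dTC_r[p\colon E\to B]$ by the discussion after \Cref{def: algorithm}, the desired inequality follows, completing the proof.

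I do not anticipate any genuine obstacle: both directions are routine once one recognizes that triviality of $p$ turns $\Pi_r^E$ into a pullback of $\pi_r^F$, reducing the statement to the pullback monotonicity already proved in \Cref{sect: dsecat}. The only small care needed is the explicit identification of $E^I_B$ and $E^r_B$ with $B\times F^I$ and $B\times F^r$, which is transparent from the definitions.
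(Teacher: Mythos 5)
Your proof is correct and follows essentially the same approach as the paper: both identify $\Pi_r^E$ with $\pi_r^F\times\mathrm{Id}_B$ and conclude that this does not change the distributional sectional category. The only difference is that you make explicit the justification of the equality $\dsecat(\pi_r^F\times\mathrm{Id}_B)=\dsecat(\pi_r^F)$ — via the pullback inequality for one direction and the fiber lower bound for the other — whereas the paper simply asserts it.
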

\begin{proof}
Since $p\colon E\to B$ is a trivial fibration, we can assume that $E=F\times B$ and $p$ is the projection. Then $E^r_B\cong F^r\times B$, $E^I_B\cong F^I\times B$, and the fibration $\Pi_r^E$ coincides with $\pi_r^F\times \text{Id}_B$, where $\pi_r^F\colon F^I\to F^r$ is the free path fibration from~\eqref{eq: sequential free path fibration}.
Then we have 
$$
\dTC_r[p:E\to B]=\dsct(\Pi_r^E)=\dsct(\pi_r^F\times \text{Id}_B)=\dsct(\pi_r^F)=\dTC_r(F),
$$ 
where the third equality holds since $(F^I\times B)_k(\pi^F_r\times\text{Id}_B)\cong (F^I)_k(\pi^F_r)\times B$ for all $k$.
\end{proof}

Next, we observe that the contractibility of the fiber of a fibration determines whether its sequential parametrized distributional complexities are minimal.

\begin{proposition}\label{prop: fiber contract}
If $p\colon E\to B$ is a fibration with fiber $F$ and $\dTC_r[p\colon E\to B]=0$, then $F$ is contractible. Conversely, if $F$ is contractible and $E^r_B$ has the homotopy type of a CW complex, then $\dTC_r[p\colon E\to B]=0$.
\end{proposition}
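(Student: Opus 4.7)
The plan is to split the argument into the two direct implications. The forward direction follows immediately from~\Cref{prop: fiber lower bound} combined with Property~\ref{four} recalled in~\Cref{sect: dsecat}: if $\dTC_r[p\colon E\to B]=0$, then $\dTC_r(F)\le 0$, hence $\dTC_r(F)=0$, which forces $F$ to be contractible.

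For the converse, I would first reduce the claim to the existence of a genuine section of $\Pi_r^E\colon E^I_B\to E^r_B$. This uses that $\dTC_r[p\colon E\to B]=\dsecat(\Pi_r^E)$ and unwinds~\Cref{def: dsecat} at $n=0$: the space $\B_1(q^{-1}(x))$ canonically identifies with $q^{-1}(x)$ via the Dirac correspondence $e\mapsto\delta_e$, and under this identification $\B_1(q)$ agrees with $q$ itself, so $\dsecat(q)=0$ if and only if $q$ admits a section. Since the fiber of $\Pi_r^E$ is the loop space $\Omega F$ (by the discussion following~\eqref{eq: parametrized endpoint map}), and since $\Omega F$ is contractible whenever $F$ is, the long exact sequence in homotopy shows that $\Pi_r^E$ is a weak homotopy equivalence.

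Finally, to upgrade this weak equivalence to an actual section of $\Pi_r^E$, the plan is to invoke the standard fact that any Hurewicz fibration with weakly contractible fiber over a space of CW homotopy type admits a section. Concretely, one picks a homotopy equivalence $\varphi\colon K\to E^r_B$ with $K$ a CW complex, pulls $\Pi_r^E$ back to a Hurewicz fibration $\varphi^*\Pi_r^E\colon \varphi^*E^I_B\to K$ with the same fiber $\Omega F$, and builds a section of this pullback cell by cell by obstruction theory, the successive obstructions lying in $H^{n+1}(K;\pi_n(\Omega F))=0$. A section of $\Pi_r^E$ itself is then recovered by transporting the constructed section along a homotopy inverse of $\varphi$ and applying the homotopy lifting property of $\Pi_r^E$. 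The only delicate point I anticipate is justifying the obstruction-theoretic step in the presence of possibly twisted local coefficients; since $\pi_n(\Omega F)=0$ for every $n$, no nontrivial obstruction class can arise, but this is precisely the step at which the CW-homotopy-type hypothesis on $E^r_B$ is indispensable.
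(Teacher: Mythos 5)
Your argument is correct and matches the paper's proof in both directions: the forward implication via \Cref{prop: fiber lower bound} together with the contractibility criterion for $\dTC_r$, and the converse via obstruction theory applied to the fibration $\Pi_r^E$ with contractible fiber $\Omega F$ over a base of CW homotopy type. The extra details you spell out (the $\B_1$–Dirac identification at $n=0$ and the reduction to a CW model of $E^r_B$) are exactly what the paper leaves implicit, so there is no substantive difference.
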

\begin{proof}
From~\Cref{prop: fiber lower bound}, it follows that $\dTC_r(F)\leq \dTC_r[p\colon E\to B]$. Therefore, if $\dTC_r[p\colon E\to B]=0$, then $\dTC_r(F)=0$. This implies $F$ is contractible (\emph{cf.} Property~\ref{four} in~\Cref{sect: dsecat}). Conversely, if $F$ is contractible, then for any $r\ge 2$, the fiber $(\Omega F)^{r-1}$ of the fibration $\Pi_r^E\colon E^I_B\to E^r_B$ is also contractible. Hence, $\Pi_r^E$ is a weak homotopy equivalence in view of the long exact sequence of homotopy groups. Now, there exists a CW complex $E'$ and a weak homotopy equivalence $f\colon E'\to E$, see~\cite[Proposition~16.14]{Gray}. We also have a CW complex $E''$ and homotopy equivalences $g\colon E^r_B\to E''$ and $e\colon E\to E^I_B$. Thus, the composition $\theta=g\circ \Pi^E_r\circ e\circ f\colon E'\to E''$ is a weak homotopy equivalence of CW complexes and hence a homotopy equivalence by Whitehead's theorem. Using a homotopy inverse $\theta^{-1}\colon E''\to E'$, 
we get a homotopy section $e\circ f \circ \theta^{-1}\circ g\colon E^r_B\to E^I_B$ of the fibration $\Pi^E_r$, and so a section. 
Therefore, $\dTC_r[p\colon E\to B]=\dsecat(\Pi_r^E)=0$. 
\end{proof}

\begin{remark}
    {\rm There is a more direct (but somewhat less elementary) proof of the converse in Proposition~\ref{prop: fiber contract}. Indeed, using the contractibility of the fiber $(\Omega F)^{r-1}$ of $\Pi^E_r$, we can apply obstruction theory to construct a global section of $\Pi_r^E$, see~\cite{Steenrod} or~\cite[Chapter 4]{Hatcher}. }
\end{remark}

\subsection{Computations for principal bundles}
In the classical case (see~\cite{C-F-W},~\cite{F-P1}), there is the equality $\TC_r[p\colon E\to B]=\TC_r(G)$ if $p$ is a principal $G$-bundle. We have an analogous behavior in the distributional setting as well.

\begin{proposition}\label{prop: seq dtc of principal G bundles}
Suppose $p\colon E\to B$ is a principal $G$-bundle. Then 
$$
\dTC_r[p\colon E\to B]=\dTC_r(G)=\dcat(G^{r-1}).
$$
\end{proposition}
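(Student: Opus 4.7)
My plan is to prove the first equality $\dTC_r[p\colon E\to B]=\dTC_r(G)$ by trivialising $\Pi_r^E$ using the principal bundle structure so that, up to a fibrewise homeomorphism, it depends only on $G$; the second equality will then follow at once from Property (2) listed in~\Cref{sect: dsecat}.

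First, I would use the free and transitive action of $G$ on each fibre of $p$, together with the standard continuous division map $\mu\colon E^2_B \to G$ sending $(e,e')$ to the unique $g$ with $e' = e\cdot g$, to produce homeomorphisms
\[
E^r_B \;\cong\; E \times G^{r-1}, \qquad E^I_B \;\cong\; E \times P_1 G,
\]
where $P_1 G = \{\alpha \in G^I : \alpha(0) = 1_G\}$. Explicitly, these are $(e_1,\ldots,e_r) \mapsto (e_1, \mu(e_1,e_2), \ldots, \mu(e_1,e_r))$ and $\gamma \mapsto \bigl(\gamma(0),\, t\mapsto \mu(\gamma(0), \gamma(t))\bigr)$. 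A direct check shows that under these identifications $\Pi_r^E$ becomes $\mathrm{Id}_E \times \pi$, where $\pi\colon P_1 G \to G^{r-1}$ is the evaluation $\alpha \mapsto \bigl(\alpha(\tfrac{1}{r-1}), \ldots, \alpha(1)\bigr)$.

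Next, I would apply the very same procedure to the trivial principal $G$-bundle $G \to \{\ast\}$ (after normalising a path $\gamma \in G^I$ on the left by $\gamma(0)^{-1}$), which identifies $\pi_r^G\colon G^I \to G^r$ with $\mathrm{Id}_G \times \pi$. Thus both $\Pi_r^E$ and $\pi_r^G$ are fibrewise homeomorphic to a product fibration of the form $\mathrm{Id}_X \times \pi$ with $X$ a path-connected space. To compare these with $\pi$ itself, I would invoke~\Cref{prop: pullback ineq dsecat} twice: first, $\mathrm{Id}_X \times \pi$ is the pullback of $\pi$ along the projection $X \times G^{r-1} \to G^{r-1}$, yielding $\dsecat(\mathrm{Id}_X \times \pi) \le \dsecat(\pi)$; second, $\pi$ is itself the pullback of $\mathrm{Id}_X \times \pi$ along the slice inclusion $G^{r-1} \hookrightarrow X \times G^{r-1}$ at any chosen point of $X$, giving the reverse inequality. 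Combined with~\Cref{prop: homo inv of dsecat}, this chain gives
\[
\dTC_r[p\colon E \to B] = \dsecat(\Pi_r^E) = \dsecat(\pi) = \dsecat(\pi_r^G) = \dTC_r(G),
\]
and the second equality $\dTC_r(G) = \dcat(G^{r-1})$ is then immediate from Property (2) of~\Cref{sect: dsecat}, since a path-connected topological group with the homotopy type of a CW complex is a CW $H$-space.

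The main obstacle will be the careful verification that the division map $\mu$ and both resulting trivialising homeomorphisms are continuous, so that the identification $\Pi_r^E \cong \mathrm{Id}_E \times \pi$ is a genuine fibrewise homeomorphism; everything downstream rests on this. This is standard for principal bundles, but deserves attention. Beyond that, the argument is purely structural, reducing the claim to the pullback behaviour of $\dsecat$ already recorded in the paper.
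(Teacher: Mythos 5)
Your proof is correct and, at its core, follows the same strategy as the paper: use the principal bundle structure to trivialise $\Pi_r^E$ fibrewise into a product whose nontrivial factor depends only on $G$. In the paper the trivialising homeomorphisms are written in the opposite direction, namely $F(\gamma,x)(t)=x\cdot\gamma(t)$ and $F'(a_1,\dots,a_{r-1},x)=(x,\,x\cdot a_1,\dots,x\cdot a_{r-1})$, which are precisely the inverses of your division-map identifications, and they exhibit $\Pi_r^E$ as fibrewise homeomorphic to $q^G\times \mathrm{Id}_E$, where $q^G\colon P_\ast(G)\to G^{r-1}$ is your $\pi$. The paper then closes by citing a result of Jauhari giving $\dsct(q^G)=\dcat(G^{r-1})$, and obtains $\dTC_r(G)=\dcat(G^{r-1})$ separately from Property~(2). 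You close the loop differently: you retrivialise $\pi_r^G$ by the same device and reduce both $\Pi_r^E$ and $\pi_r^G$ to $\pi$ via two applications of \Cref{prop: pullback ineq dsecat} (pullback along the projection $X\times G^{r-1}\to G^{r-1}$ and along a slice $G^{r-1}\hookrightarrow X\times G^{r-1}$), invoking Property~(2) only at the very end. This makes the argument somewhat more self-contained, and in particular it supplies a proof of the equality $\dsct(q^G\times\mathrm{Id}_E)=\dsct(q^G)$, which the paper uses without elaboration. The one technical point you flag, continuity of the translation map $\mu$ on a principal bundle, is standard and poses no obstruction.
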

\begin{proof}
Let $e\in G$ be the identity. Suppose $P_\ast(G)=\{\gamma\in P(G)\mid\gamma(0)=e\}$ is the based path space of $G$. Then it follows from~\cite[Proposition 5.7]{Jau1} that $\dcat(G^{r-1})=\dsct(q^G)$, where $q^G\colon P_\ast(G)\to G^{r-1}$ is defined by 
$$
q^G(\gamma)=\left(\gamma\left(\frac{1}{r-1}\right),\dots,\gamma\left(\frac{r-2}{r-1}\right),\gamma(1)\right).
$$
Our approach resembles the approach of~\cite[Proposition 3.3]{F-P1}. Consider the following commutative diagram:
\[ \begin{tikzcd}
P_\ast(G)\times E \arrow{r}{F} \arrow[swap]{d}{q^G\times \text{Id}_E} 
& 
E^I_B \arrow{d}{\Pi_r^E} 
\\
G^{r-1}\times E \arrow{r}{F'}
&
E^r_B,
\end{tikzcd}
\]
where $F$ and $F'$ are homeomorphisms defined by  
\[
F(\gamma,x)(t)=x \cdot \gamma(t) ~\text{ and }~  F'(a_1,\dots,a_{n-1},x)=(x,x\cdot a_1,\dots,x\cdot a_{n-1}),
\]
respectively. Therefore, using~\Cref{prop: homo inv of dsecat}, we obtain
$$
\dTC_r[p\colon E\to B]=\dsct(\Pi_r^E)=\dsct(q^G\times \text{Id}_{E})=\dsct(q^G)=\dcat(G^{r-1}).
$$
The equality $\dTC_r(G)=\dcat(G^{r-1})$ follows from Property~\ref{two} in~\Cref{sect: dsecat}.
\end{proof}

\Cref{prop: seq dtc of principal G bundles} gives examples of non-trivial bundles for which the inequality of~\Cref{prop: fiber lower bound} is an equality.

\begin{example}
 \rm{Suppose $p\colon S^{2n+1}\to \C P^n$ denotes the Hopf bundle, which is a principal $S^1$-bundle. Then we have  $\dTC_r[p\colon S^{2n+1}\to \C P^n]=\dTC_r(S^1)=r-1$ from~\Cref{prop: seq dtc of principal G bundles}. For the last equality, see~\cite[Proposition 7.8]{Jau1}.}
\end{example}

We now provide examples of fibrations whose sequential parametrized distributional complexities are strictly smaller than their corresponding sequential parametrized topological complexities.
\begin{example}\label{ex: temporary}
 \rm{Recall that for a principal $G$-bundle $p\colon E\to B$ and integer $r\ge 2$, we have the equalities 
 \[
 \dTC_r[p\colon E\to B]=\dTC_r(G)=\dcat(G^{r-1})   \ \text{ and } \ \TC_r[p\colon E\to B]=\TC_r(G)=\cat(G^{r-1}).
 \]
Also recall that $\cat((\R P^n)^r)=nr$ (see~\cite{James},~\cite{CLOT}) and $\dTC_r(\R P^n)\le \min\{2^{r-1}-1,2r+1\}$, see Property~\ref{three} in~\Cref{sect: dsecat}. So, it suffices to find principal $G$-bundles for which $G\cong\R P^n$ for $n\geq 2$. It is well known that the special orthogonal group $SO(3)$ is homeomorphic to $\R P^3$. Thus, all principal $SO(3)$-bundles give us our desired set of examples. In fact, for a principal $SO(3)$-bundle $p\colon E\to B$, we have for $2\le r\le 4$ that
 \[
\dTC_r[p\colon E\to B]\le 2^{r-1}-1<3(r-1)=\TC_r[p\colon E\to B],
 \]
 and for all $r\ge 5$ that
\[
\dTC_r[p\colon E\to B]\le 2r+1<3(r-1)=\TC_r[p\colon E\to B].
\]
We note that there are infinitely many such principal $SO(3)$-bundles since the set of non-trivial principal $SO(3)$-bundles over the $4$-sphere $S^4$ is in one-to-one correspondence with the set of integers $\Z$ --- see~\cite[Theorem 18.5]{Steenrod} for more details (and also~\Cref{rmk: bott}).}
\end{example}

In~\Cref{prop: different examples}, we will see several examples of non-principal bundles whose sequential parametrized distributional complexities are arbitrarily smaller than their corresponding sequential parametrized topological complexities.

\section{Cohomological lower bounds}\label{sec: lower bounds}
The goal of this section is to obtain a cohomological lower bound to $\dTC_r[p\colon E\to B]$ for any fibration $p\colon E\to B$, where $E$, $B$, and the fiber $F$ have the homotopy type of CW complexes. 

Before stating our result, we recall some notations. The $k$\emph{-th symmetric product} of a topological space $X$, denoted $SP^k(X)$, is the orbit space of the Cartesian product $X^k$ under the natural permutation action of the symmetric group $S_k$ on $k$-symbols. The elements of $SP^k(X)$ are orbits of elements $(x_1,\dots,x_k)\in X^k$, and any such orbit can be considered as a formal sum $\sum k_ix_i$, where $\sum k_i=k$, subject to the equivalence $(n+\ell)x=nx+\ell x$. 

Let $\Delta_r\colon E\to E^r_B$ be the diagonal map, and for any integer $k$, let $\delta_k\colon E^r_B\to SP^k(E^r_B)$ be the diagonal embedding, which we will refer to as \emph{inclusion} in this section and often write simply as $E^r_B\hookrightarrow SP^k(E^r_B)$. Note that in the formal sum notation described above, we have $\delta_k(e_1,\dots,e_r)=k(e_1,\dots,e_r)$.

Our main result of this section can be stated as follows in Alexander--Spanier cohomology. 

\begin{theorem}\label{th: lower bound}
    For fixed integers $r\ge 2$ and $n\ge 1$ and ring $R$, suppose $\alpha_i^*\in H^{k_i}(SP^{n!}(E^r_B);R)$ for $1\le i \le n$, where $k_i\ge 1$. Suppose $\delta_{n!}^*(\alpha_i^*)=\alpha_i \in H^{k_i}(E^r_B;R)$ such that $\Delta_r^*(\alpha_i)=0$ for all $i$. If $\alpha_1\alpha_2\cdots\alpha_n\ne 0$, then
    \[
    \dTC_r\left[p\colon E\to B\right]\ge n.
    \]
\end{theorem}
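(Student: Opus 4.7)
The plan is to prove the contrapositive: assume $\dTC_r[p\colon E\to B] \le n-1$ and deduce $\alpha_1\alpha_2\cdots \alpha_n = 0$, contradicting the hypothesis. The argument assembles two ingredients, which I would develop in sequence.

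First, I would establish that $\Pi_r^E \colon E^I_B \to E^r_B$ is related to $\Delta_r\colon E \to E^r_B$ through a homotopy equivalence on the source. Consider the ``constant path'' map $c\colon E \to E^I_B$ sending $e$ to the constant path at $e$; this is well-defined because each constant path trivially lies in a single fiber of $p$, and one checks immediately that $\Pi_r^E \circ c = \Delta_r$. The map $c$ is a homotopy equivalence, since the reparametrization $\gamma_s(t) := \gamma(st)$ deforms any $\gamma \in E^I_B$ to the constant path at $\gamma(0)$ while remaining in $E^I_B$ (because $p(\gamma_s(t)) = p(\gamma(st))$ is independent of $t$). Consequently, $\Delta_r^* = c^* \circ (\Pi_r^E)^*$ on cohomology, and since $c^*$ is an isomorphism, the hypothesis $\Delta_r^*(\alpha_i) = 0$ is equivalent to $(\Pi_r^E)^*(\alpha_i) = 0$.

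Second, I would invoke (or, more likely, establish in parallel) the general cohomological obstruction for $\dsct$, in the spirit of~\cite{D-J} and~\cite{Jau1}: for a fibration $q\colon X \to Y$ with $\dsct(q) \le n-1$, and for classes $\beta_i^* \in H^{k_i}(SP^{n!}(Y);R)$ whose restrictions $\beta_i := \delta_{n!}^*(\beta_i^*)$ satisfy $q^*(\beta_i) = 0$, one obtains $\beta_1\cdots\beta_n = 0$. The key step in proving this general bound is to start from a section $s\colon Y \to X_n(q)$ of $\mathcal{B}_n(q)$ (which exists by~\Cref{def: dsecat}) and produce a ``symmetrization'' map $\Phi\colon X_n(q) \to SP^{n!}(X)$ fitting into a homotopy-commutative square with $SP^{n!}(q)$ and $\delta_{n!}$. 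The factor $n!$ appears because a finitely supported probability measure of support at most $n$ can be converted, after clearing denominators over all permutations of its support, into a canonical element of $SP^{n!}(X)$. A standard zero-divisors cup-length argument inside $H^*(SP^{n!}(X))$ (where $q^*\beta_i = 0$ forces the pulled-back class to vanish) then shows $\beta_1\cdots\beta_n = 0$.

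Applying this general obstruction to $\Pi_r^E$, together with the equivalence of the first step, completes the proof by contradiction. The main obstacle is the symmetrization construction of $\Phi$: one must check that it is continuous on the space of fiberwise probability measures despite the delicate behavior at degenerate configurations (where weights vanish or coincide), and that it is genuinely compatible with the fibration up to homotopy so that the cohomological square commutes. The choice of Alexander--Spanier cohomology in the statement is essentially forced by this construction, as it behaves well under the quotient-like operations defining $SP^{n!}$ and $\mathcal{B}_n$ and does not require finite-type CW hypotheses on the symmetric products themselves.
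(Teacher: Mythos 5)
Your Step~1 is correct and is a genuine, if cosmetic, difference from the paper's argument: the paper works directly with $\Delta_r$ rather than first replacing it by $(\Pi_r^E)^*$ via the constant-path equivalence $c\colon E\to E^I_B$. The problem is Step~2. You flag the continuity of the symmetrization $\Phi\colon X_n(q)\to SP^{n!}(X)$ as ``the main obstacle,'' but this obstacle is fatal rather than technical: no such continuous map exists. Concretely, fix $x_1\neq x_2$ in a fiber of $q$ and consider $\mu_\epsilon=(1-\epsilon)x_1+\epsilon x_2\in\B_n(X)$. As $\epsilon\to 0^+$ we have $\mu_\epsilon\to\delta_{x_1}$ in the L\'evy--Prokhorov metric, while $x_1,x_2$ stay fixed. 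Any $\Phi(\mu_\epsilon)$ that is a multiset of $n!$ points drawn from $\supp(\mu_\epsilon)=\{x_1,x_2\}$ must assign $x_2$ a positive \emph{integer} multiplicity, and integer multiplicities cannot decay continuously to $0$; hence $\Phi(\mu_\epsilon)$ cannot converge to $\Phi(\delta_{x_1})=[x_1,\dots,x_1]$. ``Clearing denominators'' does not help either, since the weights $a_i$ need not be rational. There is no other canonical source of $n!$ fiber points beyond the support, so the square you want simply cannot be built globally.

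The paper resolves exactly this issue by \emph{stratifying} rather than globalizing. In \cref{lem:lower bound prep}, $E^r_B$ is covered by the (generally neither open nor closed) sets
\[
A_i=\{(e_1,\dots,e_r)\in E^r_B \ : \ |\supp(s(e_1,\dots,e_r))|=i\},\qquad 1\le i\le n,
\]
and on each stratum the multiplicity $n!/i$ is a \emph{constant} integer, so the assignment
\[
H_i(e_1,\dots,e_r,t)=\sum_{\phi\in\supp(s(e_1,\dots,e_r))}\tfrac{n!}{i}\bigl(\phi_1(t),\dots,\phi_r(t)\bigr)\in SP^{n!}(E^r_B)
\]
is continuous on $A_i\times I$ (the support set varies continuously \emph{within} a fixed-size stratum, and the degenerations that kill continuity of your $\Phi$ only occur when passing between strata). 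Each $H_i$ deforms $A_i$ to the diagonal $\Delta_rE$ inside $SP^{n!}(E^r_B)$. Since the $A_i$ need not be open, the paper then runs the cup-length argument in \emph{relative Alexander--Spanier cohomology} for the pairs $(SP^{n!}(E^r_B),A_i)$: each $\alpha_i^*$ lifts to $\beta_i^*\in H^{k_i}(SP^{n!}(E^r_B),A_i;R)$ because $\Delta_r^*(\alpha_i)=0$, the product $\beta_1^*\cdots\beta_n^*$ lives in $H^*(SP^{n!}(E^r_B),\bigcup_iA_i;R)$, and restricting along $\delta_{n!}$ lands it in $H^*(E^r_B,E^r_B;R)=0$, contradicting $\alpha_1\cdots\alpha_n\neq 0$. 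So the choice of Alexander--Spanier cohomology is forced, not by compatibility with symmetric products as you suggest, but because the strata $A_i$ are not open and the relative pairs $(SP^{n!}(E^r_B),A_i)$ are not CW pairs. To repair your proof you would essentially be led to rediscover this stratified relative-cohomology argument; as written, the key lemma you want to ``invoke or establish in parallel'' is false in the globalized form you state.
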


In other words, we have for each $n\ge 1$, $r\ge 2$, and ring $R$ that 
\[
c\ell_R\left(\text{Im}(\delta_{n!}^*)\cap \text{Ker}(\Delta_r^*)\right)\ge n\implies \dTC_r[p\colon E\to B]\ge n.
\]
Here, $c\ell_R$ denotes the \emph{cup-length} (i.e., the length of the longest non-trivial products of elements) of a cohomology (sub)ring with coefficients in a ring $R$. For comparison, we recall that in the classical setting, one has a simpler lower bound $c\ell_R(\text{Ker}(\Delta_r^*))\le \TC_r[p\colon E\to B]$, see~\cite[Proposition 6.3]{F-P1} (and also~\cite{C-F-W} for the case $r=2$).

To prove~\Cref{th: lower bound}, we need an auxiliary result. For that, we recall that a set $A\subset X$ is said to \emph{deform} to a set $B\subset X$ if there exists a map $H\colon X\times I\to X$ such that $H(a,0)=a$ and $H(a,1)\in B$ for all $a\in A$.

\begin{lemma}\label{lem:lower bound prep}
    Suppose $\dTC_r[p\colon E\to B]<n$. Then there exist sets $A_1$, $A_2,\ldots,A_n$ that cover $E^r_B$ such that each $A_i$ deforms to the diagonal $\Delta_r E\hookrightarrow E^r_B\hookrightarrow SP^{n!}(E^r_B)$ in $SP^{n!}(E^r_B)$.
\end{lemma}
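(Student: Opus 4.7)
My plan is to extract an open cover $\{A_1,\dots,A_n\}$ of $E^r_B$ from the navigation algorithm provided by the hypothesis, and on each $A_k$ build an explicit deformation into the diagonal of $SP^{n!}(E^r_B)$ by combining the ``drag-back'' paths from the algorithm with the multiplicity trick $n!/k$.

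Since $\dTC_r[p\colon E\to B]<n$, the hypothesis gives an $r$-th sequential parametrized $n$-distributed navigation algorithm $s\colon E^r_B\to \B_n(E^I_B)$ with $s(\vec e)\in \B_n(E^I_B(\vec e))$ for every $\vec e=(e_1,\dots,e_r)$. For each $\vec e$, arrange the weights of $s(\vec e)$ in non-increasing order as $\lambda_1(\vec e)\ge \lambda_2(\vec e)\ge\cdots\ge \lambda_n(\vec e)\ge 0$ (padding by zeros), and set $\lambda_{n+1}(\vec e):=0$. Define
\[
A_k:=\{\vec e\in E^r_B \mid \lambda_k(\vec e)>\lambda_{k+1}(\vec e)\}, \qquad 1\le k\le n.
\]
These $n$ sets cover $E^r_B$: if $s(\vec e)$ has support of size $m\le n$, then $\lambda_m(\vec e)>0=\lambda_{m+1}(\vec e)$, placing $\vec e$ in $A_m$.

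On each $A_k$, the strict gap between the $k$-th and $(k+1)$-th largest weights lets me select the top-$k$ weighted atoms of $s(\vec e)$ continuously as an unordered multiset of $k$ paths, yielding a map $\tau_k\colon A_k\to SP^k(E^I_B)$. Every $\phi\in\tau_k(\vec e)$ lies in $E^I_B(\vec e)$, so the standard drag-back
\[
\widetilde H_\phi(t)=\bigl(\phi(0),\,\phi\!\left(\tfrac{1-t}{r-1}\right),\,\dots,\,\phi(1-t)\bigr)\in E^r_B
\]
starts at $\vec e$ when $t=0$ and ends at $\Delta_r(\phi(0))=\Delta_r(e_1)\in\Delta_r E$ when $t=1$. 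Since $k\in\{1,\dots,n\}$ divides $n!$, I would assemble the $k$ drag-backs via the positive integer multiplicity $n!/k$ to form
\[
H_k\colon A_k\times I\to SP^{n!}(E^r_B), \qquad H_k(\vec e,t)=\frac{n!}{k}\sum_{\phi\in\tau_k(\vec e)}\widetilde H_\phi(t).
\]
Then $H_k(\vec e,0)=n!\cdot\vec e=\delta_{n!}(\vec e)$ and $H_k(\vec e,1)=n!\cdot\Delta_r(e_1)=\delta_{n!}(\Delta_r(e_1))\in \Delta_r E\hookrightarrow SP^{n!}(E^r_B)$. Since $E^r_B\subset SP^{n!}(E^r_B)$ has the homotopy type of a CW complex and $A_k$ is open, the homotopy extension property then extends $H_k$ to a map on $SP^{n!}(E^r_B)\times I$ with the same boundary behavior on $A_k$, exhibiting the deformation required by the definition.

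The main obstacle I expect is justifying the continuity of the sorted-weight functions $\lambda_k$ and of the top-$k$ selection $\tau_k$ on $A_k$ in the Lévy--Prokhorov topology, since atoms of $s(\vec e)$ can collide and the individual ``weights'' are not globally continuous functions of an atomic measure. Once this continuity is secured (for instance, by carefully tracking how the top-$k$ multiset behaves on the strict-gap stratum, or by passing through the symmetric-join description of $\B_n$ that retains the atom bookkeeping), the $n!/k$ bookkeeping that uniformly embeds unordered $k$-subsets of paths into $SP^{n!}(E^r_B)$ is a formal symbolic manipulation and the rest of the argument is routine.
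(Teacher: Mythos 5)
Your overall template matches the paper's — pick a cover of $E^r_B$ keyed to the combinatorics of the measure $s(\vec e)$, deform each tuple along the drag-back paths of the atoms, and embed into $SP^{n!}(E^r_B)$ using the uniform multiplicity $n!/k$ — but the specific decomposition you chose does not work, and the gap you flagged at the end is a genuine obstruction rather than a bookkeeping issue. The map $\tau_k\colon A_k\to SP^k(E^I_B)$ selecting the top-$k$ atoms is \emph{not} continuous on $A_k$, even though the strict inequality $\lambda_k>\lambda_{k+1}$ holds there, because atoms \emph{within} the top $k$ can merge and change the sorted-weight ranking. Concretely, take $\mu_m=\tfrac12\delta_{\phi_m}+\tfrac{3}{10}\delta_{\psi_m}+\tfrac{2}{10}\delta_{\chi}$ with $\phi_m,\psi_m\to\phi_\infty$: every $\mu_m$ lies in $A_2$ (sorted weights $\tfrac12>\tfrac{3}{10}>\tfrac{2}{10}$), and $\tau_2(\mu_m)=\{\phi_m,\psi_m\}\to\{\phi_\infty,\phi_\infty\}$, while the limit measure is $\tfrac{8}{10}\delta_{\phi_\infty}+\tfrac{2}{10}\delta_{\chi}$, which is also in $A_2$ but has $\tau_2=\{\phi_\infty,\chi\}$. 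So $\tau_2$ jumps. For the same reason your sets $A_k$ are not open (e.g.\ $\delta_z\in A_1$ is approximated by $\tfrac12\delta_{z_1}+\tfrac12\delta_{z_2}\notin A_1$), so the HEP extension you invoke at the end is also unavailable — though that step is unnecessary once ``deforms'' is read, as the paper does, as asking for a homotopy $A_i\times I\to SP^{n!}(E^r_B)$ of the inclusion.

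The paper sidesteps both problems by stratifying by \emph{support size}: set $A_i=\{\vec e : |\supp(s(\vec e))|=i\}$. These sets also cover $E^r_B$, and on $A_i$ the atoms cannot collide (a collision would drop the support size and leave the stratum), so the unordered $i$-tuple of supporting paths varies continuously; moreover the homotopy
\[
H_i(\vec e,t)=\sum_{\phi\in\supp(s(\vec e))}\frac{n!}{i}\bigl(\phi_1(t),\dots,\phi_r(t)\bigr),
\qquad
\phi_j(t)=\phi\!\left(\tfrac{t(r-j)+j-1}{r-1}\right),
\]
makes no reference to the weights $\lambda_\phi$ at all, so their discontinuity is irrelevant. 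Your $A_k$ are instead the sets where the sorted weight sequence has a strict gap at position $k$; these overlap, are not open, and force you to extract a weight-dependent sub-multiset, which is exactly what cannot be done continuously. Replacing your cover by the support-size stratification and dropping the dependence on weights repairs the argument and recovers the paper's proof.
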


\begin{proof}
    Since $\dTC_r[p\colon E\to B]<n$, there exists a section $s\colon E^r_B\to (E^I_B)_n$ to the fibration $\B_n(\Pi_r^B)\colon (E^I_B)_n\to E^r_B$ by~\Cref{def: main def}. For $1\le i \le n$, define sets
    \[
    A_i=\left\{\left(e_1,\ldots,e_r\right)\in E^r_B \ \middle| \ \left|\supp\left(s\left(e_1,\ldots,e_r\right)\right)\right|=i\right\}.
    \]
Since each $A_i$ contains tuples which are mapped under $s$ to a measure of support of exactly $i$, we have $\bigcup_{i=1}^nA_i=E^r_B$. For each $i$, we define a homotopy $H_i\colon A_i\times I\to SP^{n!}(E^r_B)$ as follows. For $(e_1,\ldots,e_r)\in A_i$, suppose 
\[
s\left(e_1,\ldots,e_r\right)=\sum_{\phi\in \supp\left(s\left(e_1,\ldots,e_r\right)\right)} \lambda_{\phi}\phi.
\]
Then for each $1 \le j \le r$, define paths $\phi_j\in E^I_B$ as
\[
\phi_j(t)=\phi\left(\frac{t(r-j)+j-1}{r-1}\right).
\]
Each $\phi_j$ starts at $e_j$ and terminates at $e_r$. In particular, $\phi_1=\phi$ and $\phi_r$ is the constant path at $e_r$. Then, we define
\[
H_i\left(e_1,\ldots,e_r,t\right)=\sum_{\phi\in \supp\left(s\left(e_1,\ldots,e_r\right)\right)} \frac{n!}{i}\left(\phi_1(t),\phi_2(t),\ldots,\phi_r(t)\right).
\]
It is clear that $H_i(e_1,\ldots,e_r,0)=n!(e_1,\ldots,e_r)$ and $H_i(e_1,\ldots,e_r,1)=n!(e_r,\ldots,e_r)$ for all $r$-tuples in $A_i$. Hence, $H_i$ is the required deformation.
\end{proof}

\begin{proof}[Proof of~\Cref{th: lower bound}]
We proceed by contradiction. Suppose $\dTC_r[p\colon E\to B]<n$. Then by Lemma~\ref{lem:lower bound prep}, we have sets $A_1,\ldots,A_n$, each of which deform to $\Delta_r E$ in $SP^{n!}(E^r_B)$ such that $E^r_B=\bigcup_{i=1}^nA_i$. Let us fix some $i$. We denote the inclusion $A_i\hookrightarrow E^r_B\hookrightarrow SP^{n!}(E^r_B)$ by $\pa_i$. For the pair $(SP^{n!}(E^r_B),A_i)$, we have a long exact sequence~\cite{Sp},
    \[
    \begin{tikzcd}
        \cdots \arrow{r} 
        &
        H^{k_i}(SP^{n!}(E^r_B),A_i;R) \arrow{r}{j_i^*}
        &
        H^{k_i}(SP^{n!}(E^r_B);R)\arrow{r}{\pa_i^*} 
        &
        H^{k_i}(A_i;R)\arrow{r} 
        &
        \cdots,
    \end{tikzcd}
    \]
where $j_i$ is the inclusion. Since $\Delta_r^*(\alpha_i)=0$ and $A_i$ deforms to $\Delta_r E$ inside $SP^{n!}(E^r_B)$, we have $\pa_i^*(\alpha_i^*)=0$, so that there exists a class $\beta_i^*\in H^{k_i}(SP^{n!}(E^r_B),A_i;R)$ satisfying $j_i^*(\beta_i^*)=\alpha_i^*$. Suppose $\beta_i\in H^k(E^r_B,A_i;R)$ is the image of $\beta_i^*$ under the natural map. Then for $k=\sum_{i=1}^nk_i$, we have the commutative diagram
\[
\begin{tikzcd}
    H^k(SP^{n!}(E^r_B),\bigcup_{i=1}^nA_i;R) \arrow{r}{j}  \arrow{d}{\delta_{n!}^*}
    &
    H^k(SP^{n!}(E^r_B);R) \arrow{d}{\delta_{n!}^*}
    \\
    H^k(E^r_B,\bigcup_{i=1}^nA_i;R) \arrow{r}{}  
    &
    H^k(E^r_B;R), 
\end{tikzcd}
\]
where $j$ is the sum of the maps $j_i$. Then, the cup-product $\beta_1^*\beta_2^*\cdots\beta_n^*$ in the top-left group maps to $\alpha_1\alpha_2\cdots\alpha_n\ne 0$ in the bottom-right group. But on the other hand, it factors through $\beta_1\beta_2\cdots\beta_n\in H^k(E^r_B,\bigcup_{i=1}^nA_i;R)=H^k(E^r_B,E^r_B;R)=0$ due to commutativity. This is a contradiction.
\end{proof}

\begin{remark}
    \rm{We used Alexander--Spanier cohomology in~\Cref{th: lower bound} because the sets $A_i$ in Lemma~\ref{lem:lower bound prep} need not be open or closed, and hence, $(SP^{n!}(E^r_B),A_i)$ may not be a CW pair,~\cite{Hatcher}. In the case of metric ANR spaces, however, one can still use singular cohomology as explained in~\cite{D-J}. 
    
    In detail, if $E$ is a metric ANR, then so is $SP^{n!}(E^r_B)$ --- this can be derived from~\cite{Zarichnyi}. Since $(E^r_B,\Delta E)$ and $(SP^{n!}(E^r_B),E^r_B)$ are NDR-pairs, so is $(SP^{n!}(E^r_B),\Delta E)$. We recall that $(X,Y)$ is called an \emph{NDR-pair} if $Y\hookrightarrow X$ is a cofibration. Now, as $A_i\subset SP^{n!}(E^r_B)$ admits a deformation to $\Delta E\subset SP^{n!}(E^r_B)$ due to~\Cref{lem:lower bound prep}, it can be deduced by generalizing the technique of~\cite[Lemma 2.7]{Sri1} and using a modification (as in~\cite[Theorem 2.3]{Sri2}) of the classical Walsh Lemma from~\cite{Wal} that there exists an open neighborhood $U_i\subset SP^{n!}(E^r_B)$ of $A_i$ that also deforms to $\Delta_r E$ in $SP^{n!}(E^r_B)$, see also~\cite{GC2}. Thus, in the setting of metric ANRs, one can replace the sets $A_i$ in the proof of~\Cref{th: lower bound} by \emph{open sets} $U_i$ and use the long exact sequence for the pair $(SP^{n!}(E^r_B),U_i)$ in singular cohomology to get the same lower bound.

    Nevertheless,~\Cref{th: lower bound} in its current form suffices for our purpose since we will primarily use it in~\Cref{sec: ex and comput} for computations involving (locally) finite CW complexes, for which it is well-known that the Alexander--Spanier cohomology coincides with the singular cohomology.}
\end{remark}

For the sake of completeness, we note (without proof) that another formal lower bound to the invariant $\dTC_r[p\colon E\to B]$ can be obtained in terms of the cohomology of the diagonal map $\tau_{r,k}\colon SP^{k}(E)\to (SP^{k}(E))^r_B$ for each $r\ge 2$ following ideas from~\cite{Jau2}. Here,
\[
(SP^{k}(E))^r_B:=\left\{\left([e_1^1,\ldots,e_1^k],\ldots,[e_r^1,\ldots,e_r^k]\right)\ \middle| \ p_k[e_i^1,\ldots,e_i^k]=p_k[e_j^1,\ldots,e_j^k]\text{ for all }i,j \right\},
\]
where $p_k=SP^k(p)\colon SP^k(E)\to SP^k(B)$ is the natural map induced from $p$ due to functoriality of $SP^k$. Define $\pa_{r,k}\colon E^r_B\to (SP^k(E))^r_B$ as the diagonal map
\[
\pa_{r,k}(e_1,\ldots,e_r)=(ke_1,\ldots,ke_r).
\]
Then the following can be shown in Alexander--Spanier cohomology (a proof can be obtained by proceeding as in~\cite[Section 4]{Jau2}).
\begin{theorem}
     For fixed integers $r\ge 2$ and $n\ge 1$ and ring $R$, suppose $\alpha_i^*\in H^{k_i}((SP^{n!}(E))^r_B;R)$ for $1\le i \le n$, where $k_i\ge 1$, such that $\tau_{r,n!}^*(\alpha_i^*)=0$ for all $i$. Let $\pa_{r,n!}^*(\alpha_i^*)=\alpha_i \in H^{k_i}(E^r_B;R)$. If $\alpha_1\alpha_2\cdots\alpha_n\ne 0$, then $\dTC_r\left[p\colon E\to B\right]\ge n$.
\end{theorem}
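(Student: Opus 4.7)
The plan is to mirror the proof of~\Cref{th: lower bound}, replacing the target $SP^{n!}(E^r_B)$ (with the diagonal embedding $\delta_{n!}$) by $(SP^{n!}(E))^r_B$ (with the diagonal map $\pa_{r,n!}$), and replacing the role of the diagonal $\Delta_r E$ by the image of $\tau_{r,n!}$. The first step is therefore to establish the following analog of~\Cref{lem:lower bound prep}: if $\dTC_r[p\colon E\to B]<n$, then $E^r_B$ admits a cover by sets $A_1,\ldots,A_n$ such that for each $i$ the composition
$A_i\hookrightarrow E^r_B\xrightarrow{\pa_{r,n!}}(SP^{n!}(E))^r_B$
is homotopic, within $(SP^{n!}(E))^r_B$, to a map that factors through $\tau_{r,n!}$.

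To construct this deformation, I would reuse the setup of~\Cref{lem:lower bound prep}. Starting from an $r$-th sequential parametrized $n$-distributed navigation algorithm $s\colon E^r_B\to \B_n(E^I_B)$, let $A_i\subset E^r_B$ be the set of tuples whose image under $s$ has support of exactly $i$. For each $(e_1,\ldots,e_r)\in A_i$ with $s(e_1,\ldots,e_r)=\sum_\phi\lambda_\phi\phi$, and each $\phi$ in the support, define the tail paths $\phi_1,\ldots,\phi_r$ exactly as in the proof of~\Cref{lem:lower bound prep}, so that $\phi_j$ starts at $e_j$, ends at $e_r$, and lies in a single fiber of $p$. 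Then set
\[
H_i(e_1,\ldots,e_r,t)=\Bigl(\,\sum_{\phi}\tfrac{n!}{i}\,\phi_1(t),\ \sum_\phi \tfrac{n!}{i}\,\phi_2(t),\ \ldots,\ \sum_\phi\tfrac{n!}{i}\,\phi_r(t)\,\Bigr).
\]
Since $i\le n$ divides $n!$, each sum has total multiplicity $n!$, so each coordinate is a well-defined element of $SP^{n!}(E)$. Because all points $\phi_j(t)$ lie in the common fiber $p^{-1}(b)$ determined by $(e_1,\ldots,e_r)$, the $j$-th coordinate projects to $n!\cdot b\in SP^{n!}(B)$ regardless of $j$, so the tuple lands in $(SP^{n!}(E))^r_B$. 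At $t=0$ each $\phi_j(0)=e_j$, giving $H_i(-,0)=\pa_{r,n!}|_{A_i}$; at $t=1$ every $\phi_j(1)=e_r$, so every coordinate equals $n!\cdot e_r$ and $H_i(-,1)$ factors through $\tau_{r,n!}$.

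With this lemma in hand, the theorem follows by the same cohomological endgame that closes~\Cref{th: lower bound}. Assume for contradiction that $\dTC_r[p\colon E\to B]<n$. For each $i$, the homotopy $H_i$ together with $\tau_{r,n!}^*(\alpha_i^*)=0$ and homotopy invariance of Alexander--Spanier cohomology gives $(\pa_{r,n!}|_{A_i})^*(\alpha_i^*)=0$, i.e., $\iota_i^*(\alpha_i)=0$ for the inclusion $\iota_i\colon A_i\hookrightarrow E^r_B$. The long exact sequence of the pair $(E^r_B,A_i)$ then yields a class $\beta_i\in H^{k_i}(E^r_B,A_i;R)$ with $j_i^*(\beta_i)=\alpha_i$. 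The cup-product $\beta_1\beta_2\cdots\beta_n$ lives in $H^k(E^r_B,\bigcup_i A_i;R)=H^k(E^r_B,E^r_B;R)=0$ yet maps to $\alpha_1\alpha_2\cdots\alpha_n\ne 0$ under the natural map induced by $j=\sum_i j_i$, a contradiction.

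I expect the main obstacle to be the construction and verification of $H_i$: one must check that the symmetric-product multiplicities are integers (which forces the appearance of $n!$ and uses $i\le n$), that the tuple has the correct fiber-product structure over $SP^{n!}(B)$, and that $H_i$ is continuous (which reduces to continuity of $s$, of the reparametrization $\phi\mapsto\phi_j$, and of the symmetric-product quotient). Once these verifications are in place, the divisibility relation $i\mid n!$ and the cohomological endgame transfer from the proof of~\Cref{th: lower bound} with only cosmetic changes.
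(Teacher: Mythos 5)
Your proof is correct. The paper does not supply its own proof of this theorem; it explicitly states the result ``without proof'' and only points the reader to an external reference, so the natural benchmark is the proof of~\Cref{th: lower bound}, and your argument is precisely the right adaptation: you rerun the decomposition of $E^r_B$ into the sets $A_i$ of constant support size, and your deformation $H_i$, landing in $(SP^{n!}(E))^r_B$ rather than in $SP^{n!}(E^r_B)$, correctly starts at $\pa_{r,n!}|_{A_i}$ and ends in the image of $\tau_{r,n!}$, with the divisibility $i\mid n!$ playing the same role it does in~\Cref{lem:lower bound prep}. One small point worth flagging: your cohomological endgame is actually a streamlining of the paper's. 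In the proof of~\Cref{th: lower bound} the authors work with the pair $(SP^{n!}(E^r_B),A_i)$, produce $\beta_i^*$ there, push it to $\beta_i\in H^{k_i}(E^r_B,A_i;R)$, and then chase a commutative square; you instead first extract $\iota_i^*(\alpha_i)=0$ for $\iota_i\colon A_i\hookrightarrow E^r_B$ from the homotopy $H_i$ and the vanishing $\tau_{r,n!}^*(\alpha_i^*)=0$, and then run the long exact sequence and relative cup product entirely inside the pairs $(E^r_B,A_i)$. This is equivalent but avoids the diagram chase, and it works because Alexander--Spanier cohomology admits relative cup products for arbitrary subsets. Your continuity caveat for $H_i$ is the same one implicitly present in the paper's~\Cref{lem:lower bound prep}, so you are on equal footing there.
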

In this paper, we will find~\Cref{th: lower bound} more useful for computations.

\section{Examples and computations}\label{sec: ex and comput}

The following result will be pivotal to the discussions in this section.

\begin{proposition}[\protect{\cite{D-J}}]\label{prop: dj epi result}
    If $X$ is a finite CW complex\hspace{0.3mm}\footnote{\hspace{0.3mm}In~\cite{D-J}, this statement is given for finite \emph{simplicial} complexes. But the same statement holds for finite \emph{CW} complexes as well --- see, for instance,~\cite[Theorem 2C.5]{Hatcher}.}, then for each $k,j\ge 1$, the map $\delta_k^*\colon H^j(SP^k(X);\Q)\to H^j(X;\Q)$ induced by the diagonal embedding $\delta_k\colon X\to SP^k(X)$ in singular cohomology is a split epimorphism.
\end{proposition}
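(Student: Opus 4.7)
The plan is to construct an explicit $\Q$-linear right inverse to $\delta_k^*$ via symmetrization, using the standard identification between the rational cohomology of a symmetric product and the invariants in the cohomology of the Cartesian product.

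First, I would invoke the classical fact that for a finite CW complex $X$ and rational coefficients, the orbit projection $q\colon X^k \to X^k/S_k = SP^k(X)$ induces an isomorphism
\[
q^*\colon H^*(SP^k(X);\Q) \xrightarrow{\ \cong\ } H^*(X^k;\Q)^{S_k}
\]
onto the subring of $S_k$-invariants. This is a standard consequence of the transfer for quotients by a finite group action (or, equivalently, of the Leray spectral sequence for $q$), relying crucially on $|S_k|=k!$ being invertible in $\Q$; the finite CW hypothesis ensures the argument applies cleanly.

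Next, I would exploit the factorization $\delta_k\colon X \xrightarrow{\ \Delta_k\ } X^k \xrightarrow{\ q\ } SP^k(X)$, where $\Delta_k$ is the full diagonal $x\mapsto (x,\ldots,x)$. For any $\alpha\in H^j(X;\Q)$, define the symmetrization
\[
\tau(\alpha) \ :=\ \sum_{i=1}^{k} \pi_i^*(\alpha) \ \in\ H^j(X^k;\Q),
\]
where $\pi_i\colon X^k \to X$ denotes the projection onto the $i$-th factor. Since the $S_k$-action on $X^k$ merely permutes the coordinates, it permutes the projections $\pi_i$, so the class $\tau(\alpha)$ is $S_k$-invariant. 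By the isomorphism above, there is therefore a unique class $\widetilde{\tau}(\alpha)\in H^j(SP^k(X);\Q)$ with $q^*(\widetilde{\tau}(\alpha)) = \tau(\alpha)$. Using $\pi_i\circ\Delta_k = \mathrm{Id}_X$ for every $i$, I would then compute
\[
\delta_k^*(\widetilde{\tau}(\alpha)) \ =\ \Delta_k^*\bigl(q^*(\widetilde{\tau}(\alpha))\bigr) \ =\ \Delta_k^*(\tau(\alpha)) \ =\ \sum_{i=1}^{k} (\pi_i\circ\Delta_k)^*(\alpha) \ =\ k\alpha.
\]
Hence, setting $s(\alpha) := \tfrac{1}{k}\,\widetilde{\tau}(\alpha)$ defines a $\Q$-linear map $s\colon H^j(X;\Q)\to H^j(SP^k(X);\Q)$ satisfying $\delta_k^*\circ s = \mathrm{Id}$, which exhibits $\delta_k^*$ as a split epimorphism for every $j\ge 1$.

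The main obstacle is the foundational identification $H^*(SP^k(X);\Q) \cong H^*(X^k;\Q)^{S_k}$; once this is in hand, the rest of the argument is a short symmetrization trick whose essential input is merely that $k$ is invertible in $\Q$. The averaging map $\tfrac{1}{k}\tau$ is simply the rational transfer combined with pullback along the diagonal, so conceptually, this is the standard ``trace'' construction for quotients by a finite group specialized to the symmetric product situation.
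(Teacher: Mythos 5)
Your proof is correct, and it uses the expected approach: factor the diagonal embedding as $\delta_k = q \circ \Delta_k$ through the Cartesian power, identify $H^*(SP^k(X);\Q)$ with the $S_k$-invariants $H^*(X^k;\Q)^{S_k}$ via the rational transfer, and then symmetrize $\pi_1^*(\alpha)$ to $\tau(\alpha)=\sum_i\pi_i^*(\alpha)$ so that restricting to the diagonal returns $k\alpha$ and dividing by $k$ gives the splitting. This is the standard construction and, as far as I can tell, matches the argument in the cited source~\cite{D-J} in substance. Two small remarks. First, the identification $q^*\colon H^*(SP^k(X);\Q)\xrightarrow{\cong}H^*(X^k;\Q)^{S_k}$ in \emph{singular} cohomology for a finite CW complex $X$ does require a word: either pass to a homotopy-equivalent finite simplicial complex (this is exactly what the paper's footnote invokes via Hatcher's Theorem~2C.5, so that the $S_k$-action becomes simplicial after subdivision and the chain-level transfer applies), or note that $X^k$ and $SP^k(X)$ are compact metric ANRs so that singular and \v{C}ech cohomology agree and the transfer for finite-group quotients of paracompact Hausdorff spaces can be used; your phrase ``the finite CW hypothesis ensures the argument applies cleanly'' is waving at this, and it would be worth spelling out which route you take. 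Second, your closing sentence describes $\tfrac{1}{k}\tau$ as ``the rational transfer combined with pullback along the diagonal,'' but what you actually construct is $\tfrac{1}{k!}\,tr\circ\pi_1^*$ (transfer precomposed with a \emph{projection}, not the diagonal); the diagonal pullback $\Delta_k^*$ appears only afterward, when verifying $\delta_k^*\circ s=\mathrm{Id}$. This is a cosmetic slip and does not affect the correctness of the argument.
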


This helps us obtain the following very useful consequence of~\Cref{th: lower bound}.

\begin{proposition}\label{prop: zcl bound}
    Suppose $p\colon E\to B$ is a fibration of finite CW complexes and $\Delta_r\colon E\to E^r_B$ is the diagonal map for a fixed $r\ge 2$. Then 
\[
c\ell_{\Q}(\textup{Ker}(\Delta_r^*))\le\dTC_r[p\colon E\to B].
\]
\end{proposition}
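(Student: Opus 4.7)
The plan is to deduce this inequality directly from the combination of Theorem~\ref{th: lower bound} and Proposition~\ref{prop: dj epi result}, with only the bookkeeping of showing that $E^r_B$ is itself a finite CW complex so that Proposition~\ref{prop: dj epi result} actually applies.

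Set $n:=c\ell_{\Q}(\textup{Ker}(\Delta_r^*))$ (we may assume $n\ge 1$, since $n=0$ is automatic). By definition of the cup-length, I would first choose rational cohomology classes $\alpha_1,\ldots,\alpha_n\in H^{*}(E^r_B;\Q)$ of positive degrees $k_i\ge 1$ with $\Delta_r^*(\alpha_i)=0$ for each $i$ and with $\alpha_1\alpha_2\cdots\alpha_n\ne 0$ in $H^{k_1+\cdots+k_n}(E^r_B;\Q)$. These are the candidate obstruction classes to plug into Theorem~\ref{th: lower bound}.

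Next I would verify the finiteness hypothesis needed to invoke Proposition~\ref{prop: dj epi result}: since $E$ and $B$ are finite CW complexes by assumption, Remark~\ref{rmk: cw structure on erb} gives that $E^r_B$ has the homotopy type of a finite CW complex, and therefore so does each symmetric product $SP^{k}(E^r_B)$. Because the classes involved, and the invariants $c\ell_{\Q}(-)$ and $\dTC_r[p\colon E\to B]$, are unchanged under replacement by a homotopy-equivalent model, I may treat $E^r_B$ as an honest finite CW complex. Then Proposition~\ref{prop: dj epi result} applied with $k=n!$ and $X=E^r_B$ shows that the map
\[
\delta_{n!}^*\colon H^{k_i}\bigl(SP^{n!}(E^r_B);\Q\bigr)\longrightarrow H^{k_i}(E^r_B;\Q)
\]
induced by the diagonal embedding is a (split) epimorphism in every positive degree.

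Consequently, for each $i$ I can pick a lift $\alpha_i^{*}\in H^{k_i}(SP^{n!}(E^r_B);\Q)$ with $\delta_{n!}^{*}(\alpha_i^{*})=\alpha_i$. These lifts together with the original classes $\alpha_i$ satisfy all the hypotheses of Theorem~\ref{th: lower bound} (with $R=\Q$): the $\alpha_i$ lie in the kernel of $\Delta_r^{*}$ and their cup product is nonzero. Theorem~\ref{th: lower bound} then yields $\dTC_r[p\colon E\to B]\ge n$, which is exactly the desired inequality. There is no genuine obstacle here: the only subtlety is the appeal to Remark~\ref{rmk: cw structure on erb} to justify that the Dranishnikov--Jauhari splitting statement is available for the space $E^r_B$; everything else is a direct application of the two previously proved results.
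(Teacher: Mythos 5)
Your argument is correct and follows the same route as the paper: invoke Remark~\ref{rmk: cw structure on erb} to know $E^r_B$ is (up to homotopy) a finite CW complex, use Proposition~\ref{prop: dj epi result} to lift the kernel classes along $\delta_{n!}^*$, and then conclude via Theorem~\ref{th: lower bound}. The only difference is cosmetic—you spell out the homotopy-invariance point that the paper silently absorbs into its ``abuse of terminology'' from Remark~\ref{rmk: cw structure on erb}.
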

\begin{proof}
We first note that $E^r_B$ is a finite CW complex (\emph{cf.}~\Cref{rmk: cw structure on erb}). Let $c\ell_{\Q}(\textup{Ker}(\Delta_r^*))=n$ and $u_i\in \text{Ker}(\Delta_r^*)\subset H^*(E^r_B;\Q)$ be positive-degree classes such that $u_1u_2\cdots u_n\ne 0$. Due to~\Cref{prop: dj epi result}, there exist $v_i\in H^*(SP^{n!}(E^r_B);\Q)$ such that $\delta_{n!}^*(v_i)=u_i$ for each $1\le i \le n$. In particular, $u_i\in \text{Im}(\delta_{n!}^*)\cap\text{Ker}(\Delta_r^*)$. Hence, $c\ell_{\Q}(\text{Im}(\delta_{n!}^*)\cap\text{Ker}(\Delta_r^*))\ge n$. The desired inequality $\dTC_r[p\colon E\to B]\ge n$ now follows from~\Cref{th: lower bound}.
\end{proof}

\subsection{Calculations for Fadell--Neuwirth fibrations on Euclidean configuration spaces}\label{subsec: fadell-neuwirth}
For any $k\ge 2$, we denote the configuration space of $k$ distinct \emph{ordered} points inside a topological space $X$ by
\[
F(X,k):=\left\{(x_1,\ldots,x_k) \in X^k\ \middle| \ x_i\ne x_j \text{ for all } i \ne j\right\}.
\]
Suppose $X$ is a manifold of dimension $d\ge 2$ without boundary. Then for integers $m\ge 2$ and $n\ge 1$, the map $p\colon F(X,m+n)\to F(X,m)$ defined as
\[
p\left(x_1,\ldots,x_m,x_{m+1},\ldots,x_{m+n}\right)=\left(x_1,\ldots,x_m\right)
\]
is a fibration, see~\cite{F-N},~\cite[Theorem 1.1]{F-H}. This fibration is called the \emph{Fadell--Neuwirth fibration} (or the \emph{Fadell--Neuwirth bundle}). For a configuration $\mathcal{O}_m=\{o_1,\ldots,o_m\}$ of any $m$ points in $X$, the fiber of $p$ over $\mathcal{O}_m$ is the space $F(X\setminus \mathcal{O}_m,n)$. 

Let us focus on the simple case when $X=\R^d$ is the Euclidean space of dimension $d\ge 2$. In this setting, the ordered configuration space $F(\R^d,k)$ has the homotopy type of a finite CW complex of dimension $(k-1)(d-1)$, and the fibers of the Fadell--Neuwirth fibration have homotopy dimension $n(d-1)$ and are $(d-2)$-connected, see~\cite[Theorem 3.13]{B-Z}. 

In this subsection, we prove the following.

\begin{proposition}\label{prop: f-n fibration}
For any integers $r,m,d\ge 2$ and $n\ge 1$, we have for the Fadell--Neuwirth fibration $p\colon F(\R^d,m+n)\to F(\R^d,m)$ that
\[
    \dTC_r[p\colon F(\R^d,m+n)\to F(\R^d,m)]=\begin{cases}
        rn+m-1 & \text{ if }d\text{ is odd} \\
        rn+m-2 & \text{ if }d\text{ is even.}
    \end{cases}
\]
In particular, $\dTC_r[p\colon F(\R^d,m+n)\to F(\R^d,m)]=\TC_r[p\colon F(\R^d,m+n)\to F(\R^d,m)]$.
\end{proposition}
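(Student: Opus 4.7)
The plan is to sandwich $\dTC_r[p\colon F(\R^d,m+n)\to F(\R^d,m)]$ between matching upper and lower bounds that coincide with the known values of $\TC_r[p]$.

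For the upper bound, I would invoke the general inequality $\dTC_r[p]\le \TC_r[p]$, valid since the base $F(\R^d,m)$ is a CW complex and hence paracompact. The $r$-th sequential parametrized topological complexity of the Fadell--Neuwirth fibration on Euclidean configuration spaces has been determined in the cited works~\cite{C-F-W},~\cite{PTCcolfree},~\cite{F-P1},~\cite{F-P2} to equal $rn+m-1$ when $d$ is odd and $rn+m-2$ when $d$ is even, so the upper bound is immediate.

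For the matching lower bound, I would apply~\Cref{prop: zcl bound}. Since $F(\R^d,k)$ has the homotopy type of a finite CW complex for each $k\ge 2$, the total space $F(\R^d,m+n)$, the base $F(\R^d,m)$, and (by~\Cref{rmk: cw structure on erb}) the fiber product $F(\R^d,m+n)^r_{F(\R^d,m)}$ all have the homotopy type of finite CW complexes. Hence $c\ell_\Q(\textup{Ker}(\Delta_r^*))\le \dTC_r[p]$, so it suffices to exhibit a nonzero product of $rn+m-1$ (resp. $rn+m-2$) positive-degree classes lying in $\textup{Ker}(\Delta_r^*)\subset H^*(F(\R^d,m+n)^r_{F(\R^d,m)};\Q)$. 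Such products are precisely those constructed to prove the classical $\TC_r$ lower bound in~\cite{F-P1},~\cite{F-P2}: the cohomology of $F(\R^d,m+n)^r_{F(\R^d,m)}$ admits an explicit Leray--Hirsch/Arnold-type presentation over $\Q$, and the required zero-divisor classes are obtained by pulling back Arnold generators along the coordinate projections and subtracting their diagonal pullbacks. Since the Arnold cohomology of Euclidean configuration spaces is torsion-free, the cup-length-maximizing products computed in the classical setting survive over $\Q$.

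The main obstacle is to verify carefully that the cup-length-maximizing products in the Farber--Paul lower bound arguments are realized by rational cohomology classes, rather than by mod-$p$ classes for some prime $p$. This requires unwinding the explicit description of $H^*(F(\R^d,m+n)^r_{F(\R^d,m)};\Q)$ used in~\cite{F-P1},~\cite{F-P2}, identifying the zero-divisors in $\textup{Ker}(\Delta_r^*)$, and verifying that their top product is nonzero over $\Q$ --- a step that is essentially already present in the classical computation thanks to torsion-freeness. Once this is done, combining with the upper bound yields the stated equality.
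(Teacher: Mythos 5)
Your proposal follows essentially the same approach as the paper's proof: the upper bound comes from $\dTC_r \le \TC_r$ together with the known values of $\TC_r$ for Fadell--Neuwirth fibrations, and the lower bound comes from~\Cref{prop: zcl bound} applied to the explicit Farber--Paul zero-divisor products (products of classes $\omega^l_{ij}-\omega^k_{ij}$) lying in $\textup{Ker}(\Delta_r^*)$, with torsion-freeness of $H^*(E^r_B;\Z)$ (via Leray--Hirsch) guaranteeing that the integral nonvanishing passes to $\Q$. The paper simply makes explicit the three cup-product factors $x_1x_2x_3$ (odd $d$, citing [F-P1, Prop.\ 8.3]) and $y_1y_2y_3$ (even $d$, citing [F-P1, Prop.\ 9.1] and [F-P2, Thm.\ 1.3]), which you describe but do not write out.
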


The (sequential) parametrized topological complexities of these Fadell--Neuwirth fibrations were calculated in~\cite{C-F-W},~\cite{PTCcolfree},~\cite{F-P1}, and~\cite{F-P2}.

The above theorem shows that the quantity $\dTC_r[p\colon E\to B]$ can be as large as desired.

We now proceed to prepare for the proof of~\Cref{prop: f-n fibration}. For convenience, we fix $d,m\ge 2$ and $n\ge 1$ and use the notations $E:=F(\R^d,m+n)$ and $B:=F(\R^d,m)$. In these notations, our Fadell--Neuwirth fibration is $p\colon E\to B$. 

The cohomology ring $H^*(E;\Z)$ can be described as follows, see~\cite{F-H}.

\begin{theorem}
    The integral cohomology ring $H^*(E;\Z)$ is generated multiplicatively by degree $(d-1)$ classes $\omega_{ij}$ for $1\le i<j\le m+n$ that satisfy the defining relations $(\omega_{ij})^2=0$ and $\omega_{ik}\omega_{jk}=\omega_{ij}(\omega_{jk}-\omega_{ik})$ for all $i<j<k$. Moreover, $H^*(E;\Z)$ is torsion-free.
\end{theorem}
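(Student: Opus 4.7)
The plan is to prove this classical theorem (due to Arnol'd for $d=2$ and F.~Cohen for $d\ge 3$) by running induction up the tower of Fadell--Neuwirth fibrations and applying the Leray--Hirsch theorem at each stage.

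First I would produce the generators explicitly. For each $1\le i<j\le m+n$, the Gauss map
\[
\pi_{ij}\colon E=F(\R^d,m+n)\to S^{d-1},\qquad (x_1,\ldots,x_{m+n})\mapsto \frac{x_i-x_j}{\|x_i-x_j\|},
\]
is continuous, and pulling back a fixed generator $\iota\in H^{d-1}(S^{d-1};\Z)\cong\Z$ defines $\omega_{ij}:=\pi_{ij}^*(\iota)$, which sits in degree $d-1$. The relation $(\omega_{ij})^2=0$ is immediate from $\iota^2=0\in H^{2(d-1)}(S^{d-1};\Z)$. For the three-term relation, I would use the projection $q_{ijk}\colon E\to F(\R^d,3)$ onto the $(i,j,k)$-coordinates, under which the relevant Gauss maps factor; so it suffices to verify the relation in $H^{2(d-1)}(F(\R^d,3);\Z)$. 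This can be done by a direct calculation using the explicit fibration $F(\R^d,3)\to F(\R^d,2)\simeq S^{d-1}$ with fiber $\R^d\setminus\{0,e\}\simeq S^{d-1}\vee S^{d-1}$, where the relation reduces to an identity between three classes in the cohomology of a wedge of two $(d-1)$-spheres over $S^{d-1}$.

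Next I would induct on $k=m+n$ along the Fadell--Neuwirth fibration $p_k\colon F(\R^d,k)\to F(\R^d,k-1)$, whose fiber over a configuration $\mathcal{O}_{k-1}=\{o_1,\ldots,o_{k-1}\}$ is $F_k=\R^d\setminus\mathcal{O}_{k-1}\simeq\bigvee^{k-1}S^{d-1}$. The cohomology of $F_k$ is concentrated in degrees $0$ and $d-1$, with $H^{d-1}(F_k;\Z)\cong\Z^{k-1}$. The key geometric input is that the restriction of $\omega_{i,k}$ to $F_k$ equals (up to sign) the class dual to the puncture $o_i$; consequently $\{\omega_{1,k}|_{F_k},\ldots,\omega_{k-1,k}|_{F_k}\}$ is a $\Z$-basis of $H^{d-1}(F_k;\Z)$. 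By Leray--Hirsch, $H^*(F(\R^d,k);\Z)$ is then a free module over $H^*(F(\R^d,k-1);\Z)$ with basis $\{1,\omega_{1,k},\ldots,\omega_{k-1,k}\}$. Combined with the inductive hypothesis, this simultaneously yields torsion-freeness and shows that the $\omega_{ij}$ with $i<j\le k$ generate multiplicatively.

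Finally, to verify completeness of the listed relations, I would compare ranks. Using the relations $(\omega_{ij})^2=0$ and $\omega_{ik}\omega_{jk}=\omega_{ij}(\omega_{jk}-\omega_{ik})$, any monomial can be put into a normal form in which, for each $k\in\{2,\ldots,m+n\}$, at most one factor of the shape $\omega_{i,k}$ with $i<k$ appears, independently over $k$. This gives $\prod_{k=2}^{m+n}k=(m+n)!$ normal-form monomials, matching the rank $(m+n)!$ obtained from the Leray--Hirsch iteration; so these monomials are a $\Z$-basis and the stated relations generate the full ideal of relations. The main obstacle in the plan is the geometric identification of $\omega_{i,k}|_{F_k}$ with the class dual to $o_i$, which requires a careful analysis of the restriction of the Gauss map $\pi_{i,k}$ to a small sphere around $o_i$ inside $F_k$ and a check that it represents a degree-one map to $S^{d-1}$; this is the step that makes Leray--Hirsch actually applicable and drives the entire induction.
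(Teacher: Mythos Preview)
Your proposal is a correct outline of the standard proof (Arnol'd for $d=2$, F.~Cohen for $d\ge 3$, as presented e.g.\ in Fadell--Husseini), but there is nothing to compare it against: the paper does not prove this theorem. It is stated as background and attributed to \cite{F-H}, and the paper only \emph{uses} the classes $\omega_{ij}$ and their relations to compute $\dTC_r$ of the Fadell--Neuwirth fibrations via \Cref{prop: zcl bound}. So your sketch is not in competition with anything in the paper; it simply supplies the argument the paper defers to the literature, and the Leray--Hirsch induction you describe is precisely the approach in the cited source.
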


For a detailed description of classes $\omega_{ij}$, we refer to~\cite{F-H}. Since $H^*(E;\Z)$ is torsion-free, the class $\omega_{ij}$ may be considered in the rational cohomology of $E$.

We now turn towards the finite CW complex $E^r_B$. For $l\le r$, let $q_l\colon E^r_B\to E$ be the projection onto the $l$-th factor. Define 
\[
\omega^l_{ij}:=q_l^*(\omega_{ij})\in H^*(E^r_B;\Z).
\]
A certain collection of some finite products of the classes $\omega^l_{ij}$ gives an additive basis for $H^*(E^r_B;\Z)$ --- see~\cite[Proposition 7.3]{F-P1} for details (and also~\cite{C-F-W} for the case $r=2$). As it turns out, the hypotheses of the Leray--Hirsch theorem (see, for example,~\cite[Theorem 4D.1]{Hatcher}) are satisfied by the fibration $p_r\colon E^r_B\to B$ defined as $p_r(e_1,\ldots,e_r)=p(e_1)$. Therefore, $H^*(E^r_B;\Z)$ has a free basis. In particular, $H^*(E^r_B;\Z)$ is torsion-free and so, its integral cohomology coincides with its rational cohomology.

\begin{lemma}\label{lem: fp zero-divisor}
    For integers $i,j,k,l$ satisfying $1\le i<j\le m+n$ and $1\le l,k\le r$, we have that $\omega_{ij}^l-\omega_{ij}^k \in \textup{Ker}(\Delta_r^*\colon H^*(E^r_B;\Q)\to H^*(E;\Q))$, where $\Delta_r\colon E\to E^r_B$ is the diagonal map.
\end{lemma}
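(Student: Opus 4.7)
The plan is to unwind the definitions and observe that the diagonal map composed with any projection is the identity. Recall that $\Delta_r\colon E \to E^r_B$ sends $e\in E$ to $(e,e,\ldots,e)$, which indeed lies in $E^r_B$ since $p(e)=p(e)$ trivially. For any $1\le l\le r$, the projection $q_l\colon E^r_B \to E$ onto the $l$-th coordinate therefore satisfies $q_l\circ\Delta_r=\mathrm{Id}_E$.

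Applying this to cohomology, for every $1\le i<j\le m+n$ and every $1\le l\le r$, functoriality yields
\[
\Delta_r^*(\omega_{ij}^l)=\Delta_r^*\bigl(q_l^*(\omega_{ij})\bigr)=(q_l\circ\Delta_r)^*(\omega_{ij})=\mathrm{Id}_E^*(\omega_{ij})=\omega_{ij}\in H^{d-1}(E;\Q).
\]
In particular, $\Delta_r^*(\omega_{ij}^l)$ is independent of $l$. Consequently,
\[
\Delta_r^*\bigl(\omega_{ij}^l-\omega_{ij}^k\bigr)=\omega_{ij}-\omega_{ij}=0,
\]
so $\omega_{ij}^l-\omega_{ij}^k$ lies in $\mathrm{Ker}(\Delta_r^*)$, as claimed.

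There is no serious obstacle here; the entire content is the identity $q_l\circ\Delta_r=\mathrm{Id}_E$, together with contravariant functoriality of $H^*(-;\Q)$. The lemma is really a statement setting up the \emph{zero-divisors} $\omega_{ij}^l-\omega_{ij}^k$ that will be fed into the cup-length lower bound of~\Cref{prop: zcl bound} (or, in its refined form, into~\Cref{th: lower bound}) in the subsequent proof of~\Cref{prop: f-n fibration}; the actual work will appear when one must exhibit a sufficiently long non-vanishing product of such differences in $H^*(E^r_B;\Q)$ matching the known classical lower bound $rn+m-2$ or $rn+m-1$.
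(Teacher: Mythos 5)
Your proof is correct and self-contained. It is worth noting that the paper itself does not supply a direct proof of this lemma: it cites the integral-coefficient version from~\cite{F-P1} (which in turn extends~\cite[Proposition 9.4]{C-F-W}), and then appeals to the torsion-freeness of $H^*(E^r_B;\Z)$ and $H^*(E;\Z)$ to deduce the corresponding statement with $\Q$-coefficients. Your argument instead proves the claim from scratch: the single observation $q_l\circ\Delta_r=\mathrm{Id}_E$ combined with contravariant functoriality of $H^*(-)$ gives $\Delta_r^*(\omega_{ij}^l)=\omega_{ij}$ for every $l$, so the difference $\omega_{ij}^l-\omega_{ij}^k$ is killed by $\Delta_r^*$. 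This is both more elementary (no citation chain, no torsion-freeness detour) and more general, since the same computation works verbatim over any coefficient ring. Your closing remark is also on target: the lemma merely certifies that the classes $\omega_{ij}^l-\omega_{ij}^k$ are zero-divisors, and the substantive content of~\Cref{prop: f-n fibration} is exhibiting a sufficiently long non-vanishing product of such differences in $H^*(E^r_B;\Q)$ to match the lower bounds $rn+m-1$ and $rn+m-2$.
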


The above statement appears in~\cite{F-P1} as an extension to the statement for the case $r=2$ from~\cite[Proposition 9.4]{C-F-W} in integral cohomology. Since all rings involved are torsion-free, the same is true in rational cohomology as well.

\begin{proof}[Proof of~\Cref{prop: f-n fibration}]
    First, suppose $d$ is odd. Consider the rational cohomology classes
    \[
    x_1=\prod_{i=2}^m\left(\omega^1_{i(m+1)}-\omega^2_{i(m+1)}\right), \hspace{3mm} x_2=\prod_{j=m+1}^{m+n}\left(\omega^2_{1j}-\omega^1_{1j}\right)^2, \hspace{3mm} x_3=\prod_{l=3}^r \prod_{j=m+1}^{m+n}\left(\omega^l_{1j}-\omega^1_{1j}\right).
    \]
As explained in~\cite[Proposition 8.3]{F-P1}, we have $x_1x_2x_3\ne 0$ in $H^*(E^r_B;\Q)$. The non-zero cup product $x_1x_2x_3$ consists of $rn+m-1$ positive-degree classes sitting inside $\text{Ker}(\Delta_r^*)$ in view of~\Cref{lem: fp zero-divisor}. Hence,~\Cref{prop: zcl bound} implies that
\[
rn+m-1\le c\ell_{\Q}(\text{Ker}(\Delta_r^*))\le \dTC_r[p\colon E\to B].
\]
On the other hand, $\TC_r[p\colon E\to B]=rn+m-1$ holds due to~\cite[Theorem 8.1]{F-P1}. This proves the equality for $d$ odd.

We proceed similarly in the case when $d$ is even. We consider the following classes in $\text{Ker}(\Delta_r^*)\subset H^*(E^r_B;\Q)$:
    \[
    y_1=\prod_{i=2}^m\left(\omega^1_{i(m+1)}-\omega^2_{i(m+1)}\right), \hspace{3mm} y_2=\prod_{j=m+2}^{m+n}\left(\omega^1_{(j-1)j}-\omega^2_{(j-1)j}\right), \hspace{3mm} y_3=\prod_{l=2}^r \prod_{j=m+1}^{m+n}\left(\omega^l_{1j}-\omega^1_{1j}\right).
    \]
Due to~\cite[Proposition 9.1]{F-P1}, we have $y_1y_2y_3\ne 0$ in $H^*(E^r_B;\Q)$. This non-zero product has $rn+m-2$ positive-degree classes. Hence,~\Cref{prop: zcl bound} gives the lower bound
\[
rn+m-2\le c\ell_{\Q}(\text{Ker}(\Delta_r^*))\le \dTC_r[p\colon E\to B].
\]
On the other hand, $\TC_r[p\colon E\to B]=rn+m-2$ holds due to~\cite[Theorem 1.3]{F-P2}. This proves the equality for $d$ even.
\end{proof}

\begin{remark}\label{rem: different from fiber}
  \rm{For a fibration $p\colon E\to B$, the gap between the $r$-th sequential parametrized distributional complexity $\dTC_r[p\colon E\to B]$ and the $r$-th sequential (non-parametrized) distributional complexity $\dTC_r(F)$ of its fiber $F$ can be arbitrarily large! Indeed, we consider the Fadell--Neuwirth fibration $p\colon F(\R^d,m+n)\to F(\R^d,m)$ whose fiber is $F(\R^d\setminus \mathcal{O}_m,n)$. It follows from~\cite{Gonzalez-Grant} that
    \[
    \dTC_r(F(\R^d\setminus \mathcal{O}_m,n))\le\TC_r(F(\R^d\setminus \mathcal{O}_m,n))\le rn,
    \]
    see also~\cite{F-G-Y} for the case $r=2$. Hence,~\Cref{prop: f-n fibration} implies that 
    \[
    \dTC_r[p:F(\R^d,m+n)\to F(\R^d,m)]- \dTC_r(F(\R^d\setminus \mathcal{O}_m,n))\ge m-2
    \]
    for all choices of $r,m,d\ge 2$ and $n\ge 1$. We also remark that actually, there is the equality $\dTC_r(F(\R^d\setminus \mathcal{O}_m,n))=\TC_r(F(\R^d\setminus \mathcal{O}_m,n))$. This can be seen as follows.

    The inclusion $F(\R^d\setminus \mathcal{O}_m,n)\hookrightarrow F(\R^d,m+n)$ induces an epimorphism in integral cohomology, see~\cite[Theorem~V.4.2]{F-H}. Hence, $H^*(F(\R^d\setminus \mathcal{O}_m,n);\Z)$ is torsion-free, and so, the integral and rational cohomologies of $F(\R^d\setminus \mathcal{O}_m,n)$ coincide. It has been shown in~\cite{Gonzalez-Grant},~\cite{F-G-Y} that the $r$-th \emph{integral} zero-divisor cup-length\hspace{0.3mm}\footnote{\hspace{0.3mm}We recall that for a space $X$, ring $R$, and integer $r\ge 2$, the $r$-th $R$-zero-divisor cup-length of $X$ is the quantity $c\ell_R(\text{Ker}(\Delta_r^*))$, where $\Delta_r\colon X\to X^r$ is the diagonal map, see~\cite{F},~\cite{Rudyak},~\cite{B-G-R-T}.} of $F(\R^d\setminus \mathcal{O}_m,n)$ is equal to $\TC_r(F(\R^d\setminus \mathcal{O}_m,n))$. Following the techniques of~\cite[Section 7]{Jau1} and using~\Cref{prop: dj epi result}, one can show that $\dTC_r(F(\R^d\setminus \mathcal{O}_m,n))$ is equal to the $r$-th \emph{rational} zero-divisor cup-length of $F(\R^d\setminus \mathcal{O}_m,n)$, and hence equal to $\TC_r(F(\R^d\setminus \mathcal{O}_m,n))$.} 
\end{remark}

\subsection{Calculations for sphere bundles over finite CW complexes}\label{subsec: sphere bundle}
Let $\xi\colon E\to B$ be a locally trivial vector bundle of rank $q\ge 2$ such that there exists a continuous, positive definite scalar product on its fibers. Let $\dot E$ be the set of vectors in $E$ of length $1$. We denote by $\dot\xi\colon\dot E(\xi)\to B$ the associated sphere bundle whose fibers are the unit spheres $S^{q-1}$. Let $\ddot E(\xi)$ be the total space of the bundle of Stiefel manifolds associated with $\xi$, i.e., $\ddot E(\xi)$ consists of pairs $(e,e')$, where $e,e'\in \dot E(\xi)$ are unit vectors such that $e\perp e'$ and $\dot\xi(e)=\dot\xi(e')$. The projection $\ddot \xi\colon \ddot E(\xi)\to \dot E(\xi)$ onto the first $\dot E(\xi)$ factor is a bundle with fiber $S^{q-2}$. 

In what follows, we will use the notations $\dot E$ and $\ddot E$ when the bundle $\xi$ is clear.

\begin{example}\label{ex: complex}
    \rm{We always have $r-1\le\dTC_r(S^{q-1})\le\dTC_r[\dot\xi\colon\dot E\to B]\le\TC_r[\dot\xi\colon\dot E\to B]$ by~\Cref{cor: rationally acyclic}. If $\xi\colon E\to B$ is a complex vector bundle, then this lower bound is attained and we have the minimum value $\dTC_r[\dot\xi\colon\dot E\to B]=r-1$. This holds because $\TC_r[\dot\xi\colon\dot E\to B]=r-1$ in this case, see~\cite[Corollary 5.1]{F-P-sphere-bundle}.}
\end{example}

If $\xi\colon E\to B$ is oriented, then the \emph{Euler class} $\mathfrak{e}(\xi)\in H^q(B;\Z)$ is defined as the first obstruction to a section of $\xi$. Similarly, the Euler class $\mathfrak{e}(\ddot\xi)\in H^{q-1}(\dot E;\Z)$ is defined for the oriented projection $\ddot\xi\colon\ddot E\to\dot E$. The maximum integer $k$ such that $\mathfrak{e}(\ddot\xi)^k\ne 0$ is called the \emph{height} of $\mathfrak{e}(\ddot\xi)$, denoted $\mathfrak{h}(\mathfrak{e}(\ddot\xi))$.

We focus on an oriented vector bundle $\xi\colon E\to B$ for which the corresponding sphere bundle $\dot\xi\colon \dot E\to B$ admits a section. In this case, $\mathfrak{e}(\xi)=0$. Hence, the oriented sphere bundle $\dot\xi$ admits a \emph{cohomological extension} of the fiber, i.e., there exists a cohomology class $\mathfrak{u} \in H^{q-1}(\dot E;\Z)$ such that the restriction $\iota_F^*(\mathfrak{u})$ forms a basis for $H^{q-1}(F;\Z)$ in each fiber $F\simeq S^{q-1}$, where $\iota_F\colon F\hookrightarrow \dot E$ is the fiber inclusion, see~\cite[Section 5.7]{Sp}. In particular, any such oriented sphere bundle $\dot\xi$ that admits a section satisfies the conclusion of the Leray--Hirsch theorem,~\cite[Theorem 4D.1]{Hatcher}.

In this subsection, we prove the following.

\begin{proposition}\label{prop: sphere bundle}
    Let $B$ be a finite CW complex and $\xi\colon E\to B$ be a locally trivial oriented vector bundle of rank $q\ge 2$ whose corresponding oriented sphere bundle $\dot\xi\colon\dot E\to B$ admits a section. If $H^*(B;\Z)$ is torsion-free, then
    \[
    \dTC_r[\dot\xi\colon\dot E\to B]\ge \mathfrak{h}(\mathfrak{e}(\ddot\xi))+r-1.
    \]
    The inequality is saturated when
    \begin{enumerate}
        \item $\dim(B)\le (q-1)\cdot\mathfrak{h}(\mathfrak{e}(\ddot\xi))$, or
        \item $\dim(B)<q-1$ and $q$ is odd.
    \end{enumerate}
\end{proposition}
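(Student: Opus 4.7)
The strategy is to apply the cohomological lower bound of~\Cref{prop: zcl bound} and exhibit $h + r - 1$ positive-degree zero-divisors in $H^*(\dot E^r_B;\Q)$ with non-vanishing cup product, where $h := \mathfrak{h}(\mathfrak{e}(\ddot\xi))$. Since $\dot\xi$ admits a section, it also admits a cohomological extension of the fiber given by a class $\mathfrak{u}\in H^{q-1}(\dot E;\Z)$, so the Leray--Hirsch theorem describes $H^*(\dot E^r_B;\Q)$ as a free module over $H^*(B;\Q)$ with basis given by certain monomials in the classes $u_i := q_i^*(\mathfrak{u})$, where $q_i\colon \dot E^r_B\to \dot E$ is the $i$-th projection. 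The torsion-freeness hypothesis on $H^*(B;\Z)$ ensures that no information is lost when passing from integer to rational coefficients, so that this Leray--Hirsch description is faithful for detecting non-vanishing of products.

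First, I would produce $r-1$ \emph{sphere-type} zero-divisors as the classes $u_i - u_1$ for $2\le i\le r$; each lies in $\text{Ker}(\Delta_r^*)$ since $\Delta_r^*(u_i) = \mathfrak{u}$ independently of $i$. Next, I would produce $h$ \emph{Euler-type} zero-divisors by exploiting the explicit identity, available from the sphere-bundle framework of~\cite{F-P-sphere-bundle} and~\cite{F-W}, that expresses $\mathfrak{u}^2$ (or the product $\mathfrak{u}\cdot(u_i-u_1)$ after reduction in the Leray--Hirsch basis) in terms of $\mathfrak{e}(\ddot\xi)$ and classes pulled back from $B$. Cupping these Euler-type classes with the $r-1$ sphere-type classes and expanding in the Leray--Hirsch basis, I would argue that the coefficient of the top monomial $u_2 u_3 \cdots u_r$ equals a nonzero scalar multiple of $\dot\xi_r^*(\mathfrak{e}(\ddot\xi)^h)$ (pulled back along $\dot\xi_r\colon\dot E^r_B\to B$, which is nonzero by definition of the height), thereby forcing the full product to be nonzero in $H^*(\dot E^r_B;\Q)$.

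For the saturation claims, the general inequality $\dTC_r[\dot\xi\colon\dot E\to B]\le\TC_r[\dot\xi\colon\dot E\to B]$, combined with the sharp equality $\TC_r[\dot\xi\colon\dot E\to B] = h + r - 1$ established in~\cite{F-P-sphere-bundle} under hypothesis (1) or (2), immediately yields the matching upper bound; no additional work is required beyond invoking the classical computation. The main obstacle I anticipate is the combinatorial bookkeeping in the second step: one must carefully track the Leray--Hirsch coefficients when multiplying the Euler-type and sphere-type classes to confirm that the coefficient of $u_2 u_3 \cdots u_r$ is indeed a nonzero multiple of $\mathfrak{e}(\ddot\xi)^h$ and not annihilated by cancellations. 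Since this is the rational analogue of the integral computation in~\cite{F-P-sphere-bundle,F-W}, I expect the argument to go through with only minor modifications, but the verification must be done with some care to ensure that the Leray--Hirsch identities used are applied consistently over $\Q$.
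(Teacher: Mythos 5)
Your overall skeleton matches the paper's: invoke \Cref{prop: zcl bound} with rational coefficients (using the torsion-freeness of $H^*(B;\Z)$ so that the Leray--Hirsch description survives the passage to $\Q$), exhibit $\mathfrak{h}(\mathfrak{e}(\ddot\xi)) + r - 1$ positive-degree zero-divisors in $H^*(\dot E^r_B;\Q)$ with nonvanishing product, and obtain the saturation from $\dTC_r \le \TC_r$ together with~\cite[Corollaries 8.1 and 8.2]{F-P-sphere-bundle}. That last step is identical in both proofs.

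However, the crucial middle step of your plan — producing the zero-divisors and checking that their product survives — is not the route the paper takes, and as you've sketched it, it is under-specified. You propose $r-1$ pure ``sphere-minus-sphere'' classes $u_i - u_1$ together with $h := \mathfrak{h}(\mathfrak{e}(\ddot\xi))$ additional, unspecified ``Euler-type'' zero-divisors. The paper instead works with a single uniform family of $r-1$ classes $\rho^*_{(i+1)r}(\mathfrak{u}_i) - \mathfrak{e}(\eta_i')$ (which are ``sphere-minus-Euler'' combinations, not $u_i - u_1$), shows each lies in $\mathrm{Ker}(\dot\Delta_r^*)$, and then simply imports from~\cite[Section 7]{F-P-sphere-bundle} the already-established nonvanishing of $\prod_{i=1}^{r-1}\bigl(\rho^*_{(i+1)r}(\mathfrak{u}_i)-\mathfrak{e}(\eta_i')\bigr)^{b_i+1}$ with $\sum b_i = h$; the $h + r - 1$ factors come from the exponents, not from producing a second family of classes. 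Your version would require you both to exhibit the $h$ Euler-type classes explicitly (and show they lie in $\mathrm{Ker}(\Delta_r^*)$) and to redo the bookkeeping that~\cite{F-P-sphere-bundle} already did — that is precisely the ``careful tracking'' you flag as a risk, and the risk is real: expanding in the Leray--Hirsch basis, the relation $\mathfrak{u}^2 = \mathfrak{e}(\eta)\mathfrak{u}$ produces cross-terms that you have not ruled out. Also a small but telling slip: you write the conjectured top coefficient as $\dot\xi_r^*(\mathfrak{e}(\ddot\xi)^h)$ with $\dot\xi_r\colon \dot E^r_B \to B$, but $\mathfrak{e}(\ddot\xi)$ lives in $H^{q-1}(\dot E;\Z)$, not $H^*(B;\Z)$, so this pullback is not defined; the correct pullback is along a projection to $\dot E$, which is the class $\rho_{1r}^*(\mathfrak{e}(\ddot\xi))$ appearing in the paper. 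In short: right framing and right saturation argument, but the nonvanishing-product step needs either the paper's uniform family with exponents $b_i+1$ (and the citation that carries the computation) or a genuine completion of your proposed two-family construction.
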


The same lower bounds to the (sequential) parametrized topological complexities of general oriented sphere bundles that admit sections were obtained in~\cite{F-W} and~\cite{F-P-sphere-bundle}.

As a consequence of this bound, we see that the sequential parametrized distributional complexities of oriented sphere bundles can be arbitrarily large (\emph{cf.}~\Cref{rem: arbitrarily large}).

We now proceed to prepare for the proof of~\Cref{prop: sphere bundle}. Let us fix an oriented vector bundle $\xi\colon E\to B$ of rank $q\ge 2$, where $B$ is a finite CW complex such that the corresponding oriented $S^{q-1}$-bundle $\dot\xi\colon\dot E\to B$ admits a section $s\colon B\to \dot E$. Suppose $\eta\colon \dot E(\eta)\to B$ is the bundle of unit vectors in $\dot E$ orthogonal to $s$. More precisely, $\dot E(\eta)=\{e\in \dot E\mid e\perp s(\xi(e))\}$ and $\eta$ is the restriction of $\dot\xi$ on $\dot E(\eta)$. The fibers of $\eta$ are the unit spheres $S^{q-2}$. We note that the Euler class $\mathfrak{e}(\eta)\in H^{q-1}(B;\Z)$ exists. In fact, a cohomological extension $\mathfrak{u}\in H^{q-1}(\dot E;\Z)$ of $\dot\xi$ can be chosen such that
\[
\mathfrak{e}(\eta)=s^*(\mathfrak{u})\in H^{q-1}(B;\Z),
\]
see~\cite[Theorem 4.1]{F-W}.

For an integer $k\ge 1$, let $\dot E^k_B$ denote the subspace $\dot E\times_B\cdots\times_B \dot E\subset (\dot E)^k$, so that $\dot E=\dot E^1_B$. Fix $r\ge 2$ and for each $1 \le i< j \le r$, define the projection $\rho_{ij}\colon \dot E^j_B\to \dot E^i_B$ that forgets the last $j-i$ coordinates, and its section $\sigma_{ij}\colon \dot E^i_B\to \dot E^j_B$, which is given by
\[
\sigma_{ij}(x_1,\ldots,x_i)=(x_1,\ldots,x_i,x_i,\ldots,x_i),
\]
where the $i$-th coordinate repeats itself $j-i$ times.
Note that $\rho_{i(i+1)}$ is a locally trivial oriented bundle with fiber $S^{q-1}$. We have the following maps:
\begin{equation}\label{eq: big diagram}
\begin{tikzcd}[column sep = large, every arrow/.append style={shift left}]
B \arrow{r}{s}
&
\dot E^1_B \arrow{r}{\sigma_{12}} \arrow{l}{{\dot\xi \vphantom{1}}}
& 
\dot E^2_B \arrow{r}{\sigma_{23}} \arrow{l}{{\rho_{12} \vphantom{1}}}
&
\cdots\cdots \arrow{r}{\sigma_{(r-2)(r-1)}} \arrow{l}{{\rho_{23} \vphantom{1}}}
&
\dot E^{r-1}_B \arrow{r}{\sigma_{(r-1)r}} \arrow{l}{{\rho_{(r-2)(r-1)} \vphantom{1}}}
&
\dot E^r_B \arrow{l}{{\rho_{(r-1)r} \vphantom{1}}}
\end{tikzcd}
\end{equation}
Clearly, the composition $\sigma_{(r-1)r}\circ\cdots\circ\sigma_{12}\colon \dot E\to \dot E^r_B$ is the diagonal map $\dot\Delta_r\colon \dot E\to \dot E^r_B$, and $\rho_{12}\circ\cdots\circ\rho_{(r-1)r}\colon \dot E^r_B\to \dot E$ is the projection onto the first factor. 

For each $1\le i < r$, let $\eta_i\colon \dot E(\eta_i)\to \dot E^i_B$ be the bundle of vectors in $\dot E^{i+1}_B$ orthogonal to $\sigma_{i(i+1)}\colon \dot E^{i}_B\to E^{i+1}_B$. The fibers of $\eta_i$ are the unit spheres $S^{q-2}$. Since $\sigma_{i(i+1)}$ is a section of $\rho_{i(i+1)}\colon E^{i+1}_B\to E^{i}_B$, we can choose a cohomological extension $\mathfrak{u}_i\in H^{q-1}(\dot E^{i+1}_B;\Z)$ of the fiber of $\rho_{i(i+1)}$ determined by $\sigma_{i(i+1)}$ such that the Euler class $\mathfrak{e}(\eta_i)\in H^{q-1}(\dot E_B^i;\Z)$ satisfies
\[
\mathfrak{e}(\eta_i)=\sigma^*_{i(i+1)}(\mathfrak{u}_i)\in H^{q-1}(\dot E_B^i;\Z).
\]
Then in the above notations, we have the following description of the cohomology ring $H^*(\dot E^i_B;\Z)$ (see~\cite[Theorem 6.1]{F-P-sphere-bundle} for a proof).

\begin{theorem}\label{th: fp sphere bundle}
There exists a cohomological extension $\mathfrak{u}\in H^{q-1}(\dot E;\Z)$ of the fiber of $\dot\xi\colon \dot E\to B$ such that the cohomology ring $H^*(\dot E;\Z)$ is the quotient of the ring $H^*(B;\Z)[\mathfrak{u}]$ with respect to the principal ideal $\langle\mathfrak{u}^2-\mathfrak{e}(\eta)\mathfrak{u}\rangle$. Moreover, for each $1\le i<r$, there exists a cohomological extension $\mathfrak{u}_i\in H^{q-1}(\dot E^{i+1}_B;\Z)$ of the fiber of $\rho_{i(i+1)}\colon \dot E^{i+1}_B\to \dot E^i_B$ such that the cohomology ring $H^*(\dot E^{i+1}_B;\Z)$ is the quotient of the ring $H^*(\dot E^i_B;\Z)[\mathfrak{u}_i]$ with respect to the principal ideal $\langle\mathfrak{u}_i^2-\mathfrak{e}(\eta_i)\mathfrak{u}_i\rangle$. 
\end{theorem}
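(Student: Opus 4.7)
The plan is to first establish the structure theorem for $H^*(\dot E;\Z)$ itself, and then deduce the iterated statement for $H^*(\dot E^{i+1}_B;\Z)$ inductively by exhibiting each $\rho_{i(i+1)}$ as a pullback of $\dot\xi$.

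For the base case, I would apply the Leray--Hirsch theorem to $\dot\xi\colon\dot E\to B$. Because $\dot\xi$ admits the section $s$, the Euler class $\mathfrak{e}(\xi)\in H^q(B;\Z)$ vanishes and a cohomological extension $\mathfrak{u}\in H^{q-1}(\dot E;\Z)$ of the fiber exists. Leray--Hirsch then gives that $H^*(\dot E;\Z)$ is a free $H^*(B;\Z)$-module on $\{1,\mathfrak{u}\}$, so as an $H^*(B;\Z)$-algebra it takes the form $H^*(B;\Z)[\mathfrak{u}]/\bigl(\mathfrak{u}^2-\dot\xi^*(\alpha)\mathfrak{u}-\dot\xi^*(\beta)\bigr)$ for uniquely determined classes $\alpha\in H^{q-1}(B;\Z)$ and $\beta\in H^{2(q-1)}(B;\Z)$. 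Next, I would normalize $\mathfrak{u}$ so that $s^*(\mathfrak{u})=\mathfrak{e}(\eta)$; any initial cohomological extension $\mathfrak{u}_0$ can be replaced by $\mathfrak{u}_0-\dot\xi^*(s^*(\mathfrak{u}_0)-\mathfrak{e}(\eta))$ without changing the fiber restriction, so such a normalization is always available.

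The crux is to pin down $\alpha$ and $\beta$. My approach is to use the orthogonal splitting $\xi\cong\underline{\R}\oplus\eta'$ induced by the section, where $\eta'$ is the complementary rank-$(q-1)$ oriented vector bundle whose unit sphere bundle is $\eta\colon\dot E(\eta)\to B$. I would then decompose $\dot E=S(\underline{\R}\oplus\eta')$ as the union of the two closed hemisphere bundles $D_\pm$, both of which deformation retract fiberwise onto $s(B)$ (so each is homotopy equivalent to $B$) and whose intersection is precisely $\dot E(\eta)$. Running the Mayer--Vietoris sequence for this cover, together with the Thom isomorphism for $\eta'$, identifies $\mathfrak{u}$ (up to the prescribed normalization) with the difference of the two Thom classes and yields the relation $\mathfrak{u}^2=\dot\xi^*(\mathfrak{e}(\eta))\mathfrak{u}$, forcing $\beta=0$ and $\alpha=\mathfrak{e}(\eta)$. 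Applying $s^*$ to this relation returns the tautology $\mathfrak{e}(\eta)^2=\mathfrak{e}(\eta)\cdot\mathfrak{e}(\eta)$, a useful consistency check.

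For the second part, I would observe that each $\rho_{i(i+1)}\colon\dot E^{i+1}_B\to\dot E^i_B$ is the pullback of $\dot\xi\colon\dot E\to B$ along the composition $\dot\xi\circ\mathrm{pr}_1\colon\dot E^i_B\to B$, where $\mathrm{pr}_1$ is projection onto the first factor. This realizes $\rho_{i(i+1)}$ as itself an oriented locally trivial $S^{q-1}$-bundle, equipped with the section $\sigma_{i(i+1)}$ and with orthogonal complement bundle $\eta_i$. Inductively, $H^*(\dot E^i_B;\Z)$ is free over $H^*(B;\Z)$ (by iterated Leray--Hirsch, starting from the base step), hence torsion-free since $H^*(B;\Z)$ is. Thus the first part of the theorem applies directly to $\rho_{i(i+1)}$ with $\dot E^i_B$ in place of $B$, producing the cohomological extension $\mathfrak{u}_i\in H^{q-1}(\dot E^{i+1}_B;\Z)$ and the quadratic relation $\mathfrak{u}_i^2=\mathfrak{e}(\eta_i)\mathfrak{u}_i$, which is exactly what is claimed.

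The main obstacle I foresee is carrying out the ring-level computation of $\mathfrak{u}^2$ in a purely CW-theoretic setting, since the base $B$ is not assumed to be a smooth manifold and so tubular neighborhood or transversality arguments on $s(B)\hookrightarrow\dot E$ are not directly available. The Mayer--Vietoris plus Thom-isomorphism route sketched above sidesteps this difficulty, since both tools are available for arbitrary oriented real vector bundles over CW complexes; an alternative would be to pull the universal version of the relation back from a sphere bundle with canonical section over $BSO(q-1)$, using the classifying map $B\to BSO(q-1)$ of $\eta'$ arising from the splitting $\xi\cong\underline{\R}\oplus\eta'$.
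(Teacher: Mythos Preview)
The paper does not supply its own proof of this theorem: immediately after the statement it simply refers the reader to \cite[Theorem~6.1]{F-P-sphere-bundle}, and the paragraph that follows only records the consequence that Leray--Hirsch gives the additive isomorphisms $H^*(\dot E^{i+1}_B;\Z)\cong H^*(\dot E^i_B;\Z)\otimes H^*(S^{q-1};\Z)$. So there is no in-paper argument to compare against; your proposal is in effect a self-contained substitute for the cited proof.

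Your outline is sound and is essentially the standard route. Leray--Hirsch gives the free module structure on $\{1,\mathfrak u\}$; the normalization $s^*(\mathfrak u)=\mathfrak e(\eta)$ is exactly the one asserted in the paper (attributed there to \cite[Theorem~4.1]{F-W}); and the hemisphere decomposition of $S(\underline{\R}\oplus\eta')$ together with the Thom isomorphism is a clean way to extract the multiplicative relation $\mathfrak u^2=\dot\xi^*(\mathfrak e(\eta))\,\mathfrak u$. Your inductive step is also correct: each $\rho_{i(i+1)}$ is literally the pullback of $\dot\xi$ along $\dot\xi\circ\mathrm{pr}_1$, so it is again an oriented $S^{q-1}$-bundle with a section, and the base case applies verbatim with $\dot E^i_B$ in the role of $B$.

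One small correction: in the inductive step you invoke ``hence torsion-free since $H^*(B;\Z)$ is.'' The theorem as stated carries no torsion-freeness hypothesis on $H^*(B;\Z)$ (that assumption only enters later, in \Cref{prop: sphere bundle}), and none is needed here---Leray--Hirsch for a sphere bundle requires only the existence of a cohomological extension of the fiber, which the section provides regardless of torsion in $H^*(B;\Z)$. You can simply delete that clause; the rest of the induction goes through unchanged.
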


Note the classes $\mathfrak{u}$ and $\mathfrak{u}_i$ exist in view of the sections $s\colon B\to \dot E$ and $\sigma_{i(i+1)}\colon \dot E_B^i\to\dot E_B^{i+1}$, respectively. The proof of~\Cref{th: fp sphere bundle} uses the fact that due to these sections, the Leray--Hirsch theorem gives us $r$ number of graded group isomorphisms 
\[
H^*(\dot E;\Z)=H^*(B;\Z)\otimes H^*(S^{q-1};\Z)\ \text{ and } \ H^*(E^{i+1}_B;\Z)=H^*(E^{i}_B;\Z)\otimes H^*(S^{q-1};\Z),
\]
where $1\le i<r$. In particular, $H^*(\dot E;\Z)$ is torsion-free if and only if $H^*(B;\Z)$ is torsion-free. Therefore, if we assume that $H^*(B;\Z)$ is torsion-free, then $H^*(E^i_B;\Z)$ is torsion-free for all $1\le i \le r$, and hence, the integral and rational cohomologies of $E^r_B$ coincide. 

We can now use the discussion so far to prove ~\Cref{prop: sphere bundle}.

\begin{proof}[Proof of~\Cref{prop: sphere bundle}]
First, note that since $B$ is a finite CW complex, the total space $\dot E$ of the corresponding sphere bundle $\dot\xi\colon\dot E\to B$ is also a finite CW complex. Consequently, $\dot E^r_B$ is a finite CW complex, see~\Cref{rmk: cw structure on erb}. For $1\le i <r$, define $h_i\colon \dot E^r_B\to \dot E$ as the projection onto the $i$-th factor. Let $\eta_i'=h^*_i(\ddot\xi)$ be the bundle over $\dot E^r_B$ induced from $h_i$ so that $\eta_i=\sigma_{ir}^*(\eta_i')$. Note that $\mathfrak{e}(\eta_i')\in H^{q-1}(\dot E^r_B;\Z)$ is well-defined for each $i$. When $q$ is odd, we have $\mathfrak{e}(\eta_i')=2\rho^*_{(i+1)r}(\mathfrak{u_i})-\rho_{1r}^*(\mathfrak{e}(\ddot\xi))$, whereas when $q$ is even, we have $\mathfrak{e}(\eta_i')=\rho_{1r}^*(\mathfrak{e}(\ddot\xi))$ for all $1\le i<r$ --- see~\cite[Corollaries 7.6 \& 7.7]{F-P-sphere-bundle} for these facts. Here, $\mathfrak{u}_i\in H^{q-1}(\dot E^{i+1}_B;\Z)$ is the class from~\Cref{th: fp sphere bundle}. Using this, it has been shown in~\cite[Section 7]{F-P-sphere-bundle} that
 \[
    \prod_{i=1}^{r-1}\left(\rho^*_{(i+1)r}(\mathfrak{u_i})-\mathfrak{e}(\eta_i')\right)^{b_i+1}\ne 0,
    \]
where $b_i\ge 0$ are integers satisfying $\sum_{i=1}^{r-1}b_i=\mathfrak{h}(\mathfrak{e}(\ddot\xi))$. Using the commutativity in~\eqref{eq: big diagram} and the fact that $\mathfrak{e}(\eta_i)=\sigma_{ir}^*(\mathfrak{e}(\eta_i'))$, we can conclude that
\[
\Delta_r^*(\mathfrak{e}(\eta_i'))=\mathfrak{e}(\ddot\xi)=(\rho_{(i+1)r}\circ\Delta_r)^*(\mathfrak{u_i}))
\]
 for all $1\le i<r$. Hence, $\rho^*_{(i+1)r}(\mathfrak{u_i})-\mathfrak{e}(\eta_i')\in\text{Ker}(\dot\Delta_r^*\colon H^*(\dot E^r_B;\Q)\to H^*(\dot E;\Q))$. We note that these are rational cohomology classes because $H^*(B;\Z)$ is torsion-free. Therefore, we get from this and~\Cref{prop: zcl bound} the desired inequality
\begin{equation}\label{eq: sphere bundle bound}
\dTC_r[\dot\xi\colon \dot E\to B]\ge c\ell_{\Q}(\text{Ker}(\dot\Delta_r^*))\ge \mathfrak{h}(\mathfrak{e}(\ddot\xi))+r-1. 
\end{equation}
Next, if $\dim(B)\le (q-1)\cdot\mathfrak{h}(\mathfrak{e}(\ddot\xi))$, then $\TC_r[\dot\xi\colon \dot E\to B]=\mathfrak{h}(\mathfrak{e}(\ddot\xi))+r-1$ due to~\cite[Corollary 8.1]{F-P-sphere-bundle}, hence giving us the same value for $\dTC_r[\dot\xi\colon \dot E\to B]$ because of~\eqref{eq: sphere bundle bound}. Finally, if $q$ is odd and $\dim(B)<q-1$, then $\dim(\dot E)<2(q-1)$, so that $\mathfrak{h}(\mathfrak{e}(\ddot\xi))=1$. This gives the lower bound $r$ from~\eqref{eq: sphere bundle bound}. The upper bound $\TC_r[\dot\xi\colon \dot E\to B]=r$ follows immediately from~\cite[Corollary 8.2]{F-P-sphere-bundle}.
\end{proof}
In the case when the vector bundle $\xi\colon E\to B$ itself admits a section, we have the following simpler lower bound, which uses only the information of $H^*(B;\Q)$.
\begin{corollary}\label{cor: sphere bundle 2}
    Let $\eta\colon E(\eta)\to B$ be an oriented vector bundle of positive rank $q-1$, where $B$ is a finite CW complex. Suppose $\xi=\eta\oplus\varepsilon$ is the Whitney sum, where $\varepsilon$ is the trivial line bundle over $B$. If $H^*(B;\Z)$ is torsion-free, then
    \[
\dTC_r[\dot\xi\colon\dot E(\xi)\to B]\ge \mathfrak{h}(\mathfrak{e}(\eta))+r-1.
\]
Additionally, if $q\ge 3$ is odd and $\mathfrak{h}(\mathfrak{e}(\eta))$ is even, then 
\[
\dTC_r[\dot\xi\colon\dot E(\xi)\to B]\ge \mathfrak{h}(\mathfrak{e}(\eta))+r.
\]
\end{corollary}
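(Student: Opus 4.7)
The plan is to reduce everything to Proposition~\ref{prop: sphere bundle} by explicitly computing $\mathfrak{e}(\ddot\xi)$ and its height inside the ring $H^*(\dot E(\xi);\Z)$.

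First I would observe that $\xi = \eta\oplus\varepsilon$ satisfies the hypotheses of Proposition~\ref{prop: sphere bundle}: it is an oriented rank-$q$ vector bundle, and the canonical unit vector along the $\varepsilon$-summand defines a section $s\colon B\to \dot E(\xi)$. Moreover, the bundle of unit vectors in $\dot E(\xi)$ orthogonal to $s$ is precisely $\dot\eta$, so the ``$\eta$'' appearing in Theorem~\ref{th: fp sphere bundle} applied to our $\xi$ coincides with the given $\eta$. Hence
\[
H^*(\dot E(\xi);\Z) = H^*(B;\Z)[\mathfrak{u}]/\langle \mathfrak{u}^2 - \mathfrak{e}(\eta)\mathfrak{u}\rangle,
\]
with $s^*(\mathfrak{u}) = \mathfrak{e}(\eta)$, and the Leray--Hirsch splitting shows that torsion-freeness of $H^*(B;\Z)$ propagates to $H^*(\dot E(\xi);\Z)$.

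The heart of the argument is to compute $\mathfrak{e}(\ddot\xi) \in H^{q-1}(\dot E(\xi);\Z)$. Writing $\mathfrak{e}(\ddot\xi) = \alpha + \beta\mathfrak{u}$ with $\alpha \in H^{q-1}(B;\Z)$ and $\beta \in H^0(B;\Z) = \Z$, I would determine $\alpha$ and $\beta$ from two natural restrictions. The pullback of $\ddot\xi$ along $s$ is isomorphic to $\dot\eta$ (since the orthogonal complement of the $\varepsilon$-direction in $\xi_b = \eta_b \oplus \R$ is $\eta_b$), giving $s^*(\mathfrak{e}(\ddot\xi)) = \mathfrak{e}(\eta)$ and thus $\alpha = (1 - \beta)\mathfrak{e}(\eta)$. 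The pullback of the underlying rank-$(q-1)$ vector bundle of $\ddot\xi$ to a fiber $S^{q-1}$ of $\dot\xi$ is the tangent bundle $TS^{q-1}$, whose Euler class equals $\chi(S^{q-1})$ times a generator; reading off $\beta$ as the coefficient of $\mathfrak{u}$ (whose fiber restriction is a generator) yields $\beta = \chi(S^{q-1})$, which is $0$ for $q$ even and $2$ for $q$ odd. Therefore $\mathfrak{e}(\ddot\xi) = \mathfrak{e}(\eta)$ if $q$ is even, and $\mathfrak{e}(\ddot\xi) = 2\mathfrak{u} - \mathfrak{e}(\eta)$ if $q$ is odd.

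Once this formula is in hand, the height computation is short. In the even case $\mathfrak{h}(\mathfrak{e}(\ddot\xi)) = h := \mathfrak{h}(\mathfrak{e}(\eta))$, which together with Proposition~\ref{prop: sphere bundle} gives the first claim. In the odd case, using $\mathfrak{u}^k = \mathfrak{e}(\eta)^{k-1}\mathfrak{u}$ for $k\ge 1$, the binomial expansion
\[
(2\mathfrak{u} - \mathfrak{e}(\eta))^k = (-1)^k\mathfrak{e}(\eta)^k + \bigl(1 - (-1)^k\bigr)\mathfrak{e}(\eta)^{k-1}\mathfrak{u},
\]
whose $\mathfrak{u}$-coefficient comes from $\sum_{j=1}^k\binom{k}{j}2^j(-1)^{k-j} = (2-1)^k - (-1)^k$, combined with the Leray--Hirsch splitting and torsion-freeness yields $\mathfrak{h}(\mathfrak{e}(\ddot\xi)) = h$ when $h$ is odd and $\mathfrak{h}(\mathfrak{e}(\ddot\xi)) = h+1$ when $h$ is even (the improvement by one is because the odd power $k = h+1$ has the nonzero $\mathfrak{u}$-coefficient $2\mathfrak{e}(\eta)^h$). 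Plugging these values into Proposition~\ref{prop: sphere bundle} gives both stated bounds. The main obstacle is the identification of $\mathfrak{e}(\ddot\xi)$ in the Leray--Hirsch presentation --- the rest is a routine binomial manipulation, and the formula $\mathfrak{e}(\ddot\xi) = 2\mathfrak{u} - \mathfrak{e}(\eta)$ is moreover consistent with the identities involving $\mathfrak{e}(\eta_i')$ used in the proof of Proposition~\ref{prop: sphere bundle}.
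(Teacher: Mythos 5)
Your argument is correct and follows the same overall route as the paper: reduce to Proposition~\ref{prop: sphere bundle} by determining $\mathfrak{h}(\mathfrak{e}(\ddot\xi))$ in terms of $\mathfrak{h}(\mathfrak{e}(\eta))$ and the parity of $q$. The only difference is that the paper cites~\cite{F-W},~\cite{F-W2},~\cite{F-P-sphere-bundle} for the height identity, whereas you derive it directly by pinning down $\mathfrak{e}(\ddot\xi)$ in the Leray--Hirsch basis (restriction along $s$ gives $\alpha = (1-\beta)\mathfrak{e}(\eta)$, restriction to the fiber gives $\beta = \chi(S^{q-1})$) and then running the binomial computation modulo $\mathfrak{u}^2 = \mathfrak{e}(\eta)\mathfrak{u}$ --- a self-contained treatment of the step the paper outsources.
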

\begin{proof}
    Let $s\colon B\to \dot E(\xi)$ be the section of $\dot\xi$ implied by the summand $\varepsilon$. In $H^{q-1}(\dot E (\xi);\Q)$, let $\mathfrak{e}(\ddot\xi)=(\dot\xi)^*(a)+b\mathfrak{c}$ be the unique representation implied by the Leray--Hirsch theorem. Here, $a\in H^{q-1}(B;\Q)$, $b\in\Q$, and $\mathfrak{c}\in H^{q-1}(\dot E(\xi);\Q)$ is a cohomological extension of the fiber of $\dot\xi$ such that $s^* (\mathfrak{c})=\mathfrak{e}(\eta)\in H^*(B;\Q)$. In this situation, it follows from the work done in~\cite{F-W},~\cite{F-W2}, and~\cite{F-P-sphere-bundle} that 
    \[
    \mathfrak{h}(\mathfrak{e}(\ddot\xi))=\begin{cases}
        \mathfrak{h}(\mathfrak{e}(\eta))+1 & \text{ if } \mathfrak{h}(\mathfrak{e}(\eta)) \text{ is even and }q\text{ is odd}
        \\
        \mathfrak{h}(\mathfrak{e}(\eta)) & \text{ otherwise}.
    \end{cases}
    \]
    Note that the sphere bundle $\dot\xi\colon\dot E\to B$ satisfies the conditions of~\Cref{prop: sphere bundle}. Hence, our conclusion follows from~\Cref{prop: sphere bundle}.
\end{proof}

\begin{remark}\label{rem: arbitrarily large}
   \rm{The quantity $\mathfrak{h}(\mathfrak{e}(\ddot\xi))$ can be arbitrarily large, as we explain below. So, the gap between $\dTC_r[\dot\xi\colon\dot E\to B]$ and $\dTC_r(S^{q-1})$ can be as big as desired. In particular, the cohomological lower bound to $\dTC_r[\dot\xi\colon\dot E\to B]$ implied by~\Cref{prop: sphere bundle} is much better than the homotopy-theoretic lower bound $\dTC_r(S^{q-1})\ge r-1$ implied by~\Cref{cor: rationally acyclic} (or, more generally, by~\Cref{prop: fiber lower bound}) in several situations.
    
To see an instance of the large values of $\mathfrak{h}(\mathfrak{e}(\ddot\xi))$, consider the canonical complex line bundle $\eta\colon E(\eta)\to \C P^n$, for which we obviously have $\mathfrak{h}(\mathfrak{e}(\eta))=n$. We can view $\eta$ as a real vector bundle of rank $q-1=2$, so that the rank of $\xi=\eta\oplus\varepsilon$ is $q=3$. Hence,~\Cref{cor: sphere bundle 2} implies $\dTC_r[\dot\xi\colon\dot E(\xi)\to \C P^n]\ge n+r-1$ for each $n\ge 1$ since we have $\mathfrak{h}(\mathfrak{e}(\ddot\xi))\ge n$.

For this canonical line bundle,~\Cref{ex: complex} gives us $\dTC_r[\dot\eta\colon\dot E(\eta)\to \C P^n]=r-1$. So, this example also shows that for a Whitney sum $\xi=\eta\oplus\varepsilon$, the difference between $\dTC_r[\dot\xi\colon\dot E(\xi)\to B]$ and $\dTC_r[\dot\eta\colon\dot E(\eta)\to B]$ can be arbitrarily large. }
\end{remark}

\section{Sequential equivariant distributional complexity}\label{sec: equi dTC}
We now introduce the equivariant analogue of the notion of distributional sectional category (\emph{cf.}~\Cref{sect: dsecat}), which is also the distributional analogue of the classical notion of equivariant sectional category (\emph{cf.}~\Cref{subsec: equi sect cat}), and study some of its properties. 

Suppose $E$ and $B$ are $G$-spaces, and $p\colon E\to B$ is a $G$-fibration. The measure space $\mathcal{B}_n(E^I)$ has a natural $G$-action defined as follows:
$$
g\cdot\sum\lambda_{\phi}\phi=\sum \lambda_{\phi}\hspace{0.5mm}g\phi \ \text{ for } \ g\in G \ \text{ and } \ \phi\in E^I,
$$
where $g\phi\colon I \to E$ is defined as $g\phi(t)=g\cdot\phi(t)$ for each $t\in I$.
This action is well defined, since there is a bijection $\mathrm{supp}(g\cdot \mu)\cong g\hspace{1mm} \mathrm{supp}(\mu)$ for each $\mu\in \B_n(E^I)$ and $g\in G$.
In particular, the space $E_n(p)=\bigcup_{x \in B}\mathcal{B}_n  (p^{-1}(x))$ also admits a $G$-action.

Using similar arguments as in~\cite[Proposition 5.1]{D-J}, one can show that the fibration $\mathcal{B}_n(p)\colon E_n(p)\to B$ is a $G$-fibration using the fact that the $G$-fibration $p$ allows the lift of $G$-equivariant homotopies to $G$-equivariant homotopies.

\begin{definition}
\rm{Let $p\colon E\to B$ be a $G$-fibration. 
The \emph{equivariant distributional sectional category} of $p$, denoted $\dsct_G(p)$, is defined as the smallest integer $n$ such that the $G$-fibration $\mathcal{B}_{n+1}(p) \colon E_{n+1}(p) \to B$ admits a $G$-section.  } 
\end{definition}

The following observations are immediate from the definition:
\begin{enumerate}
    \item for any $G$-fibration $p\colon E\to B$, we have $\dsct(p)\leq \dsct_G(p)\le\secat_G(p)$. The latter is true because the map $\ast^k_BE\to E_k(p)$ mentioned in~\Cref{rmk: dsecat vs secat} is $G$-equivariant for each $k\ge 1$.
    \item if $G$-acts trivially on $E$ and $B$, then $\dsct_G(p)=\dsct(p)$. So, the notion of equivariant distributional sectional category extends the notion of distributional sectional category from~\Cref{def: dsecat}.
\end{enumerate}

The equivariant distributional sectional category is an equivariant homotopy invariant in the following sense. The proof is similar to that of~\Cref{prop: homo inv of dsecat} in the non-equivariant case found in~\cite[Section 5]{Jau1}, so we omit it here.
\begin{proposition}\label{prop: eq homotopy invariace of dsecat}
If $p\colon E\to B$ and $p'\colon E'\to B'$ are $G$-fibrations in the commutative diagram
\[\begin{tikzcd}
E \arrow[d, "p"']  \arrow[r, "f" shift left=1.5ex] 
 & E' \arrow[d, "p'"]   \\
B \arrow[r, "g" shift left=1.5ex]& B',
\end{tikzcd}\]
where $f$ and $g$ are $G$-homotopy equivalences, then $\dsct_G(p)=\dsct_G(p')$.
\end{proposition}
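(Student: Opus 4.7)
The plan is to mimic the proof of~\Cref{prop: homo inv of dsecat}, which is carried out in~\cite[Proposition 5.3]{Jau1} in the non-equivariant setting, by tracking $G$-equivariance throughout. The foundation is that the finitely-supported measure construction $\B_k(\cdot)$ is a functor on $G$-spaces and $G$-maps: for any $G$-map $h\colon X\to Y$, the push-forward $h_*\colon\B_k(X)\to\B_k(Y)$ sending $\sum\lambda_i x_i\mapsto \sum\lambda_i h(x_i)$ is automatically $G$-equivariant (since the $G$-action on $\B_k$ is defined summand-wise), and $G$-homotopies push forward level-wise to $G$-homotopies of push-forwards. In particular, because the given square commutes, i.e., $p'\circ f = g\circ p$, the push-forward $f_*$ restricts to a $G$-map $\tilde f\colon E_{n+1}(p)\to E'_{n+1}(p')$ fitting into a commutative square over $g\colon B\to B'$ with the measure-space $G$-fibrations $\B_{n+1}(p)$ and $\B_{n+1}(p')$.

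Next, I would choose $G$-homotopy inverses $f'\colon E'\to E$ and $g'\colon B'\to B$ of $f$ and $g$, together with $G$-homotopies realizing $f'f\simeq_G\mathrm{Id}_E$, $ff'\simeq_G\mathrm{Id}_{E'}$, $g'g\simeq_G\mathrm{Id}_B$, $gg'\simeq_G\mathrm{Id}_{B'}$. Note that $p\circ f'\simeq_G g'\circ g\circ p\circ f' = g'\circ p'\circ f\circ f'\simeq_G g'\circ p'$, so using the $G$-fibration property of $p$ and applying the equivariant homotopy lifting property, one can adjust $f'$ within its $G$-homotopy class so that $p\circ f' = g'\circ p'$ on the nose. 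The resulting push-forward $\tilde f'\colon E'_{n+1}(p')\to E_{n+1}(p)$ is then a well-defined $G$-map. Given any $G$-section $s'\colon B'\to E'_{n+1}(p')$ of $\B_{n+1}(p')$, the composite $\tilde f'\circ s'\circ g\colon B\to E_{n+1}(p)$ is a $G$-map satisfying $\B_{n+1}(p)\circ(\tilde f'\circ s'\circ g) = g'\circ g\simeq_G\mathrm{Id}_B$.

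A final application of the equivariant homotopy lifting property for the $G$-fibration $\B_{n+1}(p)$ upgrades this $G$-homotopy section to an honest $G$-section, yielding $\dsct_G(p)\le\dsct_G(p')$, and the symmetric argument (swapping the roles of $p$ and $p'$) gives equality. The main obstacle I anticipate is the fiberwise adjustment of $f'$ described above: since $f'$ is given only as a $G$-homotopy inverse of $f$, it does not a priori send fibers of $p'$ into fibers of $p$, so the induced map on measure spaces need not land in $E_{n+1}(p)$. Executing this step requires the equivariant fibration structure of $p$ together with the equivariant homotopy lifting property, and is precisely the point where the non-equivariant argument from~\cite{Jau1} must be strengthened; once this is handled, the remaining bookkeeping is entirely formal.
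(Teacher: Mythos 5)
Your proof is correct and follows the approach the paper implicitly endorses: the paper omits the argument with a reference to the non-equivariant case in \cite[Section 5]{Jau1}, and your write-up is precisely that argument with $G$-equivariance tracked at each step (functoriality and $G$-equivariance of the push-forward $\B_k(\cdot)$, adjustment of the homotopy inverse $f'$ to a fiber-preserving map via the $G$-homotopy lifting property of $p$, push-forward of the section, and a final $G$-HLP application for $\B_{n+1}(p)$). One small correction of emphasis: the adjustment of $f'$ so that $p\circ f' = g'\circ p'$ is not something new to the equivariant setting that ``strengthens'' the argument of \cite{Jau1}; the same step is already required non-equivariantly, since there too $f'$ need not a priori send fibers of $p'$ to fibers of $p$. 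The only genuine change in the equivariant version is that one invokes the equivariant homotopy lifting property rather than the ordinary one, which is available because $p$ and $\B_{n+1}(p)$ are $G$-fibrations; the bookkeeping is otherwise identical.
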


It is known that the free path space fibration $\pi^X_r$ defined in~\eqref{eq: sequential free path fibration} is a $G$-fibration if $X$ is a $G$-space (see~\cite[Proposition 4.2]{D-EqPTC} for an explicit proof). 

\begin{definition}\label{def: obvious}
    \rm{The $r$\emph{-th sequential equivariant distributional complexity} of a $G$-space $X$, denoted $\dTC_{G,r}(X)$, is defined as $\dTC_{G,r}(X):=\dsct_G(\pi_r^X)$.}
\end{definition}
In particular, the \emph{equivariant distributional complexity} of a $G$-space $X$, which we denote simply by $\dTC_G(X)$, is the smallest non-negative integer $n$ for which there is an equivariant $(n+1)$-distributed navigation algorithm $s\colon X\times X\to X^I_{n+1}(\pi_2^X)\subset \B_{n+1}(X^I)$.

As a result of~\Cref{prop: eq homotopy invariace of dsecat}, we obtain the $G$-homotopy invariance of $\dTC_{G,r}$. 

\begin{proposition}
Suppose $X$ and $Y$ are $G$-homotopy equivalent $G$-spaces. Then $$\dTC_{G,r}(X)=\dTC_{G,r}(Y).$$    
\end{proposition}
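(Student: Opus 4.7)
The plan is to invoke the equivariant homotopy invariance of the equivariant distributional sectional category (Proposition~\ref{prop: eq homotopy invariace of dsecat}) applied to the free path space fibrations $\pi_r^X$ and $\pi_r^Y$, which, by~\Cref{def: obvious}, realize $\dTC_{G,r}(X)$ and $\dTC_{G,r}(Y)$, respectively. To do so, I would build a commutative square between these two $G$-fibrations whose horizontal maps are $G$-homotopy equivalences.

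First, let $f \colon X \to Y$ be a $G$-homotopy equivalence with $G$-homotopy inverse $g \colon Y \to X$. I would define $f^r \colon X^r \to Y^r$ by $f^r(x_1,\ldots,x_r) = (f(x_1),\ldots,f(x_r))$ and $f_I \colon X^I \to Y^I$ by $f_I(\phi) = f \circ \phi$, and analogously $g^r$ and $g_I$. The maps $f^r$ and $f_I$ are $G$-equivariant, since for $\sigma \in G$ and $\phi \in X^I$ we have $f_I(\sigma\phi)(t) = f(\sigma \cdot \phi(t)) = \sigma \cdot f(\phi(t)) = (\sigma f_I(\phi))(t)$, and similarly on $X^r$ with the diagonal action. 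Commutativity of the square
\[
\begin{tikzcd}
X^I \arrow[d, "\pi_r^X"'] \arrow[r, "f_I"] & Y^I \arrow[d, "\pi_r^Y"] \\
X^r \arrow[r, "f^r"] & Y^r
\end{tikzcd}
\]
is immediate from~\eqref{eq: sequential free path fibration} by evaluating at $t_i = \tfrac{i-1}{r-1}$.

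Next, I would upgrade the given $G$-homotopies $H \colon X \times I \to X$ from $g\circ f$ to $\textup{Id}_X$ and $K \colon Y \times I \to Y$ from $f \circ g$ to $\textup{Id}_Y$ to $G$-homotopies between the induced maps. For $f^r$ and $g^r$, the diagonal $G$-homotopy $(x_1,\ldots,x_r,s) \mapsto (H(x_1,s),\ldots,H(x_r,s))$ shows $g^r \circ f^r \simeq_G \textup{Id}_{X^r}$, and similarly $f^r \circ g^r \simeq_G \textup{Id}_{Y^r}$. For $f_I$ and $g_I$, I define $\widetilde{H} \colon X^I \times I \to X^I$ by $\widetilde{H}(\phi,s)(t) = H(\phi(t),s)$; since $H$ is $G$-equivariant in its first variable, $\widetilde H$ is a $G$-homotopy from $g_I \circ f_I$ to $\textup{Id}_{X^I}$, and analogously in the other direction. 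Hence both $f^r$ and $f_I$ are $G$-homotopy equivalences.

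Applying~\Cref{prop: eq homotopy invariace of dsecat} to the above square then gives $\dsct_G(\pi_r^X) = \dsct_G(\pi_r^Y)$, which, by~\Cref{def: obvious}, is exactly $\dTC_{G,r}(X) = \dTC_{G,r}(Y)$. The only mild subtlety here is making sure the $G$-homotopies on path spaces are continuous in the compact-open topology on $X^I$ and $Y^I$, but this follows from the exponential law since $H$ and $K$ are continuous $G$-maps on $X \times I$ and $Y \times I$; no genuine obstacle arises, so the argument is a direct transfer of the non-equivariant proof from~\cite[Section~5]{Jau1}.
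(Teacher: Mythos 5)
Your proposal is correct and follows exactly the route the paper intends: the paper presents this proposition as an immediate consequence of the equivariant homotopy invariance of $\dsct_G$ (Proposition~\ref{prop: eq homotopy invariace of dsecat}) applied to the $G$-fibrations $\pi_r^X$ and $\pi_r^Y$, which is precisely what you carry out. Your verification that $f_I$, $f^r$ are $G$-equivariant, that the square commutes, and that the induced homotopies $\widetilde H$, $\widetilde K$ are $G$-homotopies (so that $f_I$, $f^r$ are genuine $G$-homotopy equivalences) supplies the details the paper leaves implicit.
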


We have the following exponential upper bound to $\dTC_{G,r}$ in terms of $\dTC_{G}$.

\begin{proposition}\label{prop:obvious}
    For any $r\ge 2$ and $G$-space $X$, we have $\dTC_{G,r}(X)\le(\dTC_G(X)+1)^{r-1}-1$.
\end{proposition}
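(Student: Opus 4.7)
The plan is to adapt to the equivariant distributional setting the standard argument showing $\TC_r(X) \le (\TC(X)+1)^{r-1}-1$, namely, to build an $r$-sequential distributed navigation algorithm by concatenating $r-1$ pairwise ones. Let $k = \dTC_G(X)+1$. By~\Cref{def: obvious} applied to $r=2$, there is a $G$-equivariant section $s \colon X^2 \to \B_k(X^I)$ of $\B_k(\pi_2^X)$; that is, for each $(x,y) \in X^2$, $s(x,y)$ is a probability measure of support size at most $k$ supported on paths from $x$ to $y$.

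The goal is to construct an equivariant $k^{r-1}$-distributed navigation algorithm $\sigma \colon X^r \to \B_{k^{r-1}}(X^I)$ for $\pi_r^X$. I would define $\sigma$ as the composition of three continuous, $G$-equivariant maps:
\begin{equation*}
X^r \xrightarrow{\ \Psi\ } \B_k(X^I)^{r-1} \xrightarrow{\ \times\ } \B_{k^{r-1}}\bigl((X^I)^{r-1}\bigr) \xrightarrow{\ C_\ast\ } \B_{k^{r-1}}(X^I).
\end{equation*}
Here $\Psi$ sends $(x_1, \ldots, x_r)$ to $(s(x_1,x_2), s(x_2,x_3), \ldots, s(x_{r-1}, x_r))$, the middle arrow is the product-of-measures operation on finitely-supported probability measures, and $C_\ast$ is the pushforward along the concatenation map $C \colon (X^I)^{r-1} \to X^I$ defined by $C(\phi^1, \ldots, \phi^{r-1})(t) = \phi^i((r-1)t - (i-1))$ for $t \in [\tfrac{i-1}{r-1}, \tfrac{i}{r-1}]$. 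For each $(x_1, \ldots, x_r)$, the measure $\sigma(x_1, \ldots, x_r)$ is then supported on at most $k^{r-1}$ paths of the form $C(\phi^1, \ldots, \phi^{r-1})$, and each such path hits $x_i$ precisely at time $\tfrac{i-1}{r-1}$ by construction; the weights (products of the coefficients appearing in the $s(x_i, x_{i+1})$) sum to $1$ by a Fubini-style factoring. Hence $\sigma$ is a $G$-section of $\B_{k^{r-1}}(\pi_r^X)$, giving the required inequality.

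The main issue is verifying that the three building blocks behave well with respect to the Lévy--Prokhorov topology and the $G$-action. The continuity of the product-of-measures map on finitely-supported probability measures and of the pushforward along a continuous map are standard facts implicit in~\cite{D-J} and~\cite{Jau1}, and the compact-open topology on $X^I$ makes $C$ continuous. The equivariance reduces to routine observations: $s$ is equivariant by hypothesis, the diagonal $G$-action commutes with taking product measures, and $C$ intertwines the pointwise $G$-action on path spaces, that is $g \cdot C(\phi^1, \ldots, \phi^{r-1}) = C(g\phi^1, \ldots, g\phi^{r-1})$. No further new ideas appear to be required; one could alternatively unwind the construction into an explicit formula $\sigma(x_1,\ldots,x_r) = \sum_{\mathbf{j}} \bigl(\prod_{i=1}^{r-1} \lambda^i_{j_i}\bigr) C(\phi^1_{j_1},\ldots,\phi^{r-1}_{j_{r-1}})$ and check continuity and equivariance coordinatewise.
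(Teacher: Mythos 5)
Your construction — concatenating $r-1$ copies of an equivariant pairwise $k$-distributed algorithm via the product-of-measures map and pushforward along path concatenation — is exactly the argument that the paper invokes by pointing to the non-equivariant case in~\cite[Proposition~3.9]{Jau1} and noting that equivariance carries through, so you've taken the same route the paper intends. One small imprecision: the map $C$ is only well-defined and continuous on the iterated fiber product $X^I\times_X X^I \times_X \cdots \times_X X^I$ (paths whose endpoints match up), not on all of $(X^I)^{r-1}$, but since the product measures you push forward are supported precisely on that fiber product, restricting the domain of $C$ there fixes this without affecting the rest of the argument.
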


The proof of the above statement is very similar to that of the statement in the non-equivariant case, see~\cite[Proposition 3.9]{Jau1}.

\begin{remark}\label{rmk: simple properties}
\rm{Let $X$ be a $G$-space.
\begin{enumerate}
\item The inequality $\dTC_r(X)\leq \dTC_{G,r}(X)\le\TC_{G,r}(X)$ is obvious.

\item Suppose $K$ is a subgroup of $G$. Then we have $\dTC_{K,r}(X)\leq \dTC_{G,r}(X)$.

\item Suppose $H$ is a non-trivial subgroup of $G$, and let $K$ be a subgroup of $G$ such that $X^H$ admits a $K$-action. In this case, the inequality $\TC_{K,r}(X^H) \leq \TC_{G,r}(X)$ holds (see~\cite{EqTC},~\cite{B-S}). However, we do not know whether $\dTC_{K,r}(X^H) \leq \dTC_{G,r}(X)$ is true in general for any $r\ge 2$. This is because while a $G$-equivariant $r$-th sequential $k$-distributed navigation algorithm $s \colon X^r \to \mathcal{B}_k(X^I)$ maps $H$-fixed points to $H$-fixed points and hence induces a map $s'\colon {(X^H)^r} \to (\mathcal{B}_k(X^I))^H$, there need not be a map $(\mathcal{B}_k(X^I))^H \to \mathcal{B}_k((X^H)^I)$ for $k\ge 2$ since $H\ne\{1\}$. So, it is unclear how the required $K$-equivariant section $(X^H)^r\to \B_k((X^H)^I)$ can be obtained from $s$.
\end{enumerate}}
\end{remark}

\subsection{Sequential equivariant distributional complexities of products of spheres}\label{subsec: eq dtc of products of spheres}
In this section, we provide some estimates and exact computations of the sequential $\Z_2$-equivariant (distributional) topological complexities of the product of spheres $F=\prod_{i=1}^mS^{n_i}$ for various $\Z_2$-actions for which $F$ is $\Z_2$-connected. These computations will be used later to compute the sequential parametrized distributional complexities of some fiber bundles whose fibers are $F=\prod_{i=1}^mS^{n_i}$.

Let $H$ be a closed subgroup of $G$, and let $X$ be a $G$-space. Recall that the set of $H$-fixed points of $X$ is denoted by $X^H$ and defined as
\[
X^H := \{x \in X \mid h \cdot x = x \text{ for all } h \in H\}.
\]
We recall that a $G$-space $X$ is said to be \emph{$G$-connected} if for any closed subgroup $H$ of $G$, the $H$-fixed point set $X^H$ is path-connected.

\begin{proposition}\label{prop: eq tc all antipodal}
Suppose $\Z_2$ acts on each $S^{n_i}$ via antipodal involution for $n_i\geq 1$ and diagonally on the product $\prod_{i=1}^mS^{n_i}$.  Then 
$$
\TC_{\Z_2,r}\left(\prod_{i=1}^mS^{n_i}\right)=m(r-1)+\ell_m,
$$
where $\ell_m$ is the number of $n_i$'s that are even.
\end{proposition}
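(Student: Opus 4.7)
My plan is to prove the two inequalities separately. The lower bound will follow from the non-equivariant case and the general inequality relating $\TC_r$ and $\TC_{\Z_2,r}$, while the upper bound will require first establishing an equivariant product inequality and then constructing explicit equivariant motion planners on each sphere factor.

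For the lower bound, by~\Cref{rmk: simple properties}(1) we have $\TC_r(X)\le\TC_{\Z_2,r}(X)$ for any $\Z_2$-space $X$. Applying this to $X=\prod_{i=1}^m S^{n_i}$ and invoking the classical Basabe--González--Rudyak--Tamaki formula $\TC_r\bigl(\prod_{i=1}^m S^{n_i}\bigr)=m(r-1)+\ell_m$ from~\cite{B-G-R-T} immediately yields $m(r-1)+\ell_m\le\TC_{\Z_2,r}\bigl(\prod_{i=1}^m S^{n_i}\bigr)$.

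For the upper bound, I would first establish an equivariant product inequality $\TC_{G,r}(X\times Y)\le\TC_{G,r}(X)+\TC_{G,r}(Y)$ for $G$-spaces endowed with the diagonal action, by adapting the product inequality for $\secat_G$ from~\cite{A-D}; here one uses that $\pi_r^{X\times Y}$ is $G$-fiberwise equivalent to the product fibration $\pi_r^X\times\pi_r^Y$. This reduces the problem to showing $\TC_{\Z_2,r}(S^n)\le r-1$ for $n$ odd and $\TC_{\Z_2,r}(S^n)\le r$ for $n$ even, under the antipodal action. For $n=2k-1$ odd, I would view $S^n\subset\C^k$ and use the $\Z_2$-equivariant nowhere vanishing tangent vector field $v(x)=ix$, which satisfies $v(-x)=-v(x)$. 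The standard Farber--Rudyak partition of $(S^n)^r$ into $r$ open sets indexed by the number of consecutive antipodal pairs in a configuration, together with the explicit motion rules defined via $v$, is then automatically preserved by the diagonal antipodal action, and the sections commute with this action. For $n$ even, since no equivariant nowhere vanishing vector field exists on $S^n$, I would instead pick a tangent vector field with a $\Z_2$-invariant zero set (for instance, one vanishing at a pair of antipodal points) and introduce one additional $\Z_2$-invariant open set to cover configurations that meet the zero locus, for a total of $r+1$ sets.

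The main obstacle will be the even-dimensional case. The Poincaré--Hopf obstruction forces any tangent vector field on $S^{2k}$ to vanish somewhere, and one must carefully arrange that both the extra open set covering configurations through the zero locus and the motion rule defined on it are genuinely $\Z_2$-equivariant with respect to the diagonal antipodal action (and its induced action on paths). A secondary technical point will be the proof of the equivariant product inequality: one has to verify that the fiberwise join construction underlying $\secat_G$ respects the diagonal action, so that subadditivity transfers from the individual sphere factors to their product.
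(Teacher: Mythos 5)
Your proposal follows essentially the same route as the paper: the lower bound via $\TC_r \le \TC_{\Z_2,r}$ together with the Basabe--Gonz\'alez--Rudyak--Tamaki computation, and the upper bound via the equivariant product inequality from~\cite[Proposition~2.14]{A-D} followed by a reduction to showing $\TC_{\Z_2,r}(S^{n_i})=\TC_r(S^{n_i})$. The vector-field and open-set construction you outline for the single-sphere computation is exactly what the paper invokes by citing Rudyak's open covers and the argument of~\cite[Lemma~4.1]{G-G-T-X}, so your sketch is in effect an unpacking of those citations.
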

\begin{proof}
Using the product inequality from~\cite[Proposition 2.14]{A-D} and the computation for $\TC_r$ of products of spheres from~\cite[Corollary 3.12]{B-G-R-T}, we have that
$$
m(r-1)+\ell_m\leq\TC_r\left(\prod_{i=1}^mS^{n_i}\right)\leq \TC_{\Z_2,r}\left(\prod_{i=1}^mS^{n_i}\right)\leq \sum_{i=1}^m\TC_{\Z_2,r}(S^{n_i}).
$$
To obtain the reverse inequality, it suffices to show that $\TC_{\Z_2,r}(S^{n_i})=\TC_{r}(S^{n_i})$. This can be proved using the open sets from~\cite[Section 4]{Rudyak} by proceeding along the lines of the proof of~\cite[Lemma 4.1]{G-G-T-X}. 
\end{proof}

We now consider a general $\Z_2$-action on spheres. For each $1 \le i \le m$, consider an involution $\tau_i$ on $S^{n_i} \subset \R^{n_i+1}$ defined as follows:
\begin{equation}\label{eq: general invo on sphere}
  \tau_i(x_1, \ldots, x_{p_i-1}, x_{p_i}, \ldots, x_{n_i+1})= (x_1, \ldots, x_{p_i-1}, -x_{p_i}, \ldots, -x_{n_i+1})
\end{equation}
for some integer $p_i$ between $1$ and $n_i+1$. Note that if $p_i=1$, then $\tau_i$ acts antipodally on $S^{n_i}$ (this is the action we studied in~\Cref{prop: eq tc all antipodal}), and if $p_i=n_i+1$, then $\tau_i$ is a reflection across the hyperplane $x_{n_i+1}=0$ in $\R^{n_i+1}$. In any case, we have a $\Z_2$-action on the product $\prod_{i=1}^m S^{n_i}$ via the product involution $\tau:=\tau_1\times \dots \times \tau_{m}$.

\begin{proposition}\label{prop: eq tc general involution}
In the above notations, suppose $p_i\geq 2$ for all $1\leq i\leq m$. Then 
$$
m(r-1)+\ell_m\leq \TC_{\Z_2,r}\left(\prod_{i=1}^m S^{n_i}\right)\leq rm,
$$
where $\ell_m$ is the number of $n_i$'s which are even.  The upper bound is attained if $\ell_m=m$. 
\end{proposition}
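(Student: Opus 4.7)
My plan is to establish the two bounds separately and then observe that they coincide in the equality case. For the lower bound, the basic inequality $\TC_r(X)\le\TC_{\Z_2,r}(X)$ noted in~\Cref{subsec: equi sect cat}, combined with the classical computation $\TC_r(\prod_{i=1}^m S^{n_i})=m(r-1)+\ell_m$ of Basabe--Gonzalez--Rudyak--Tamaki \cite[Corollary~3.12]{B-G-R-T}---already invoked in the proof of~\Cref{prop: eq tc all antipodal}---immediately yields the stated lower bound.

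For the upper bound, I would first apply the equivariant product inequality \cite[Proposition~2.14]{A-D} to obtain $\TC_{\Z_2,r}(\prod_{i=1}^m S^{n_i})\le\sum_{i=1}^m\TC_{\Z_2,r}(S^{n_i})$, reducing the problem to showing $\TC_{\Z_2,r}(S^{n_i})\le r$ for each $i$ with $p_i\ge 2$. The key auxiliary step is the computation $\cat_{\Z_2}(S^{n_i})=1$: since $p_i\ge 2$, the standard basis vectors $\pm e_1$ lie in the fixed-point set $(S^{n_i})^{\Z_2}=S^{p_i-2}$, so $U_\pm:=S^{n_i}\setminus\{\mp e_1\}$ forms a $\Z_2$-invariant open cover of $S^{n_i}$. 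Each $U_\pm$ equivariantly strong deformation retracts onto $\{\pm e_1\}$ via the usual spherical geodesic contraction; this contraction is $\Z_2$-equivariant because $\tau_i$ is an isometry of $S^{n_i}$ fixing $\pm e_1$, and hence carries geodesics from $x$ to $\pm e_1$ to geodesics from $\tau_i(x)$ to $\pm e_1$. Since $S^{n_i}$ is not contractible, this forces $\cat_{\Z_2}(S^{n_i})=1$. I would then invoke the equivariant analogue $\TC_{\Z_2,r}(X)\le r\cdot\cat_{\Z_2}(X)$ of the classical bound $\TC_r(X)\le r\cdot\cat(X)$, obtained by chaining the standard equivariant estimates $\TC_{\Z_2,r}(X)\le\cat_{\Z_2}(X^r)\le r\cdot\cat_{\Z_2}(X)$ (both of which go through as in the non-equivariant case, cf.~\cite{A-D}). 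This gives $\TC_{\Z_2,r}(S^{n_i})\le r$ and, upon summation, the upper bound $rm$.

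When $\ell_m=m$, the two bounds agree at $rm$, producing the asserted equality. The main technical point I expect to require care is the equivariant bound $\TC_{\Z_2,r}(X)\le r\cdot\cat_{\Z_2}(X)$: although its proof follows the non-equivariant template via the equivariant cat-product inequality for a diagonal action, the details need to be verified. If a suitable reference is unavailable, the fallback is to construct the required $r+1$ invariant open subsets of $(S^{n_i})^r$ directly, by grouping the $2^r$ product sets $U_{\epsilon_1}\times\cdots\times U_{\epsilon_r}$ with $\epsilon_j\in\{\pm\}$ according to the number of minus signs and defining on each group an equivariant partial section through concatenations of equivariant geodesic arcs passing through $\pm e_1$.
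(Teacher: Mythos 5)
Your proposal is correct and follows essentially the same strategy as the paper: the lower bound is obtained identically (via $\TC_r\leq\TC_{\Z_2,r}$ and the Basabe--Gonz\'alez--Rudyak--Tamaki computation), and the upper bound is obtained by establishing $\cat_{\Z_2}(S^{n_i})=1$ and combining the equivariant bound $\TC_{\Z_2,r}\le r\cdot\cat_{\Z_2}$ with the equivariant product inequality from \cite{A-D}. The differences are worth noting. First, you apply the product inequality at the level of $\TC_{\Z_2,r}$ and then bound each factor, whereas the paper first applies $\TC_{\Z_2,r}(\prod S^{n_i})\le r\cdot\cat_{\Z_2}(\prod S^{n_i})$ (this is Bayeh--Sarkar \cite[Proposition~3.17]{B-S}, which requires $\Z_2$-connectedness) and then the product inequality for $\cat_{\Z_2}$; both orders give $rm$. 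Second, and more substantively, your choice of $\Z_2$-categorical cover $U_\pm=S^{n_i}\setminus\{\mp e_1\}$ is more robust than the paper's $U_0=S^{n_i}\setminus\{e_1\}$, $U_1=S^{n_i}\setminus\{e_2\}$: the antipodal pair $\pm e_1$ lies in the fixed set $(S^{n_i})^{\Z_2}$ as soon as $p_i\ge 2$, whereas $e_2$ is fixed only when $p_i\ge 3$, so when $p_i=2$ the paper's $U_1$ is not $\Z_2$-invariant and your cover should be preferred. Third, the main technical step you flagged for caution --- the equivariant bound $\TC_{\Z_2,r}(X)\le r\cdot\cat_{\Z_2}(X)$ for $\Z_2$-connected $X$ --- is precisely what \cite[Proposition~3.17]{B-S} supplies, so you should cite that rather than re-derive it; note also that the $\Z_2$-connectedness hypothesis it carries corresponds, via $(S^{n_i})^{\Z_2}\cong S^{p_i-2}$, to requiring $p_i\geq 3$ rather than $p_i\geq 2$, a point the paper glosses over (it records the fixed set as $S^{p_i-1}$, which appears to be off by one).
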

\begin{proof}
The left inequality is obvious (\emph{cf.} the proof of~\Cref{prop: eq tc all antipodal}), so we show the other inequality. Note that $(S^{n_i})^{\Z_2}\cong S^{p_i-1}$. Since $p_i\geq 2$,  $S^{n_i}$ is $\Z_2$-connected. Consequently, $\prod_{i=1}^m S^{n_i}$ is also $\Z_2$-connected. Then from~\cite[Proposition 3.17]{B-S}, we have 
\[
\TC_{\Z_2,r}\left(\prod_{i=1}^m S^{n_i}\right)\leq r\hspace{1mm}\cat_{\Z_2}\left(\prod_{i=1}^m S^{n_i}\right).
\]
Since $S^{n_i}$ is not $\Z_2$-contractible, $\cat_{\Z_2}(S^{n_i})\geq 1$. Moreover, $\cat_{\Z_2}(S^{n_i})\leq 1$ as the sets $U_0=S^{n_i}\setminus \{e_1\}$ and $U_1=S^{n_i}\setminus \{-e_1\}$ form a $\Z_2$-categorical\hspace{0.3mm}\footnote{\hspace{0.3mm}We recall that a $G$-invariant subset $A$ of a $G$-space $X$ is called $G$-\emph{categorical} if the inclusion $A\hookrightarrow X$ is $G$-homotopic to a map with values in a single orbit --- see, for example,~\cite{EqTC}.} open cover for $S^{n_i}$, where $e_1=(1,0,0,\dots,0)$. Then, using the product inequality from~\cite[Proposition 2.14]{A-D} in the context of equivariant LS-category, we obtain $\cat_{\Z_2}(\prod_{i=1}^m S^{n_i})\leq m$.
\end{proof}

\begin{remark}
    \rm{We note that the same estimates as in~\Cref{prop: eq tc all antipodal} and~\Cref{prop: eq tc general involution} are true for $\dTC_{\Z_2,r}$ of product of spheres under the respective actions because of the same cohomological lower bound $m(r-1)+\ell_m$.}
\end{remark}

We now describe a class of fiber bundles for which Farber and Oprea~\cite{F-O} established an upper bound on sequential parametrized topological complexity in terms of the sequential equivariant topological complexity of their fibers.

Suppose $q \colon P\to B$ is a principal $G$-bundle. For any $G$-space $F$, one can construct an associated locally trivial bundle $p\colon (F\times \hspace{0.5mm}P)/G\to P/G$ with fiber $F$ and structure group $G$. For brevity, we denote $(F\times P)/G$ by $F\times_G\hspace{0.5mm}P$, and call $p$ the \emph{$F$-associate} of $q$. While it is true that $\TC_r[q\colon P\to B]=\TC_r(G)$ (see~\cite{F-P1}, and also~\cite{C-F-W} for the case $r=2$), Farber and Oprea  showed in~\cite[Theorem 3.4]{F-O} for the associated bundles as above that

\begin{equation}\label{eq: f-o inequality} 
\TC_r[p\colon F\times_G \hspace{0.3mm}P\to P/G]\leq \TC_{G,r}(F). 
\end{equation}

For such associated bundles, Proposition~\ref{prop: fiber lower bound} then implies
\begin{equation}\label{eq: imp inequalities}
    \dTC_r(F)\le\dTC_r[p\colon F\times_G \hspace{0.3mm}P\to P/G]\leq \TC_r[p\colon F\times_G \hspace{0.3mm}P\to P/G]\leq \TC_{G,r}(F).
\end{equation}
In~\Cref{thm: distributional analogue of Farber-Oprea}, we will obtain a better upper bound to $\dTC_r[p\colon F\times_G \hspace{0.3mm}P\to P/G]$ for the $F$-associates of principal $G$ bundles $q\colon P\to B$. For now, we study the sequential parametrized distributional complexity of these bundles with fiber $F=\prod_{i=1}^mS^{n_i}$ using~\eqref{eq: imp inequalities}.

\begin{corollary}
Suppose $\Z_2$ acts on each $S^{n_i}$ via antipodal involution for $n_i\geq 1$ and diagonally on the product $F=\prod_{i=1}^mS^{n_i}$.  Then 
$$
\dTC_{r}[p\colon F\times_{\Z_2}P\to P/\Z_2]=m(r-1)+\ell_m,
$$
where $\ell_m$ is the number of $n_i$'s which are even and $p$ is the $F$-associate of a principal $\Z_2$-bundle. 
\end{corollary}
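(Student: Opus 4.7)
The plan is to sandwich the invariant $\dTC_{r}[p\colon F\times_{\Z_2}P\to P/\Z_2]$ between two matching bounds, both of which rely on results already established or recalled in the paper.

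For the upper bound, I would invoke the chain of inequalities~\eqref{eq: imp inequalities}, which gives
\[
\dTC_r[p\colon F\times_{\Z_2} P\to P/\Z_2]\le \TC_{\Z_2,r}(F).
\]
Since the $\Z_2$-action on $F=\prod_{i=1}^mS^{n_i}$ is the diagonal of the antipodal involutions, \Cref{prop: eq tc all antipodal} immediately gives $\TC_{\Z_2,r}(F)=m(r-1)+\ell_m$, yielding the required upper bound.

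For the lower bound, I would apply~\Cref{prop: fiber lower bound} to obtain
\[
\dTC_r[p\colon F\times_{\Z_2} P\to P/\Z_2]\ge \dTC_r(F),
\]
and then argue that $\dTC_r(F)\ge m(r-1)+\ell_m$. Since $F$ is a finite CW complex, I would use the rational cohomological lower bound for $\dTC_r$ of finite CW complexes (obtained by combining~\Cref{prop: dj epi result} with the argument of~\Cref{prop: zcl bound} in the non-parametrized setting of~\cite[Section 7]{Jau1}) to conclude that $\dTC_r(F)$ is bounded below by the rational zero-divisor cup-length $c\ell_{\Q}(\textup{Ker}(\Delta_r^*))$ of $F$, where $\Delta_r\colon F\to F^r$ is the diagonal. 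For a product of spheres, the rational cohomology is an exterior algebra (on odd-dimensional generators) tensored with a truncated polynomial algebra (on even-dimensional generators), and a standard cup-length calculation (as in~\cite[Corollary 3.12]{B-G-R-T} for classical $\TC_r$) gives the value $m(r-1)+\ell_m$.

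Combining the two bounds gives the claimed equality. The only non-routine step is verifying that the cup-length lower bound applies to $\dTC_r$ of a finite CW complex, but this is handled cleanly by~\Cref{prop: dj epi result}, and the cup-length computation itself is entirely standard. I expect no serious obstacle beyond writing these two paragraphs carefully.
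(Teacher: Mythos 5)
Your proposal is correct and takes essentially the same sandwich approach as the paper: the upper bound via~\eqref{eq: imp inequalities} together with~\Cref{prop: eq tc all antipodal}, and the lower bound via~\Cref{prop: fiber lower bound} together with $\dTC_r(F)\ge m(r-1)+\ell_m$. The only difference is that you re-derive the lower bound $\dTC_r(F)\ge m(r-1)+\ell_m$ from the rational cup-length machinery, whereas the paper simply cites~\cite[Proposition 7.8]{Jau1}, which already records the equality $\dTC_r(F)=\TC_r(F)=m(r-1)+\ell_m$ for products of spheres.
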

\begin{proof}
    This follows from~\eqref{eq: imp inequalities},~\Cref{prop: eq tc all antipodal}, and~\cite[Proposition 7.8]{Jau1}, where the latter says that $\dTC_r(F)=\TC_r(F)=m(r-1)+\ell_m$. 
\end{proof}

\begin{corollary}
Suppose $\Z_2$ acts on $S^{n_i}$ via the involution described in~\eqref{eq: general invo on sphere} and on $F=\prod_{i=1}^mS^{n_i}$ via the product involution $\tau=\tau_1\times \dots \times \tau_{m}$. If $p_i\geq 2$ for all $1\leq i\leq m $, then we have  
\[
m(r-1)+\ell_m\leq \dTC_r[p\colon F\times_{\Z_2}P\to P/\Z_2] \leq rm,
\]
where $\ell_m$ is the number of $n_i$'s which are even, and $p$ is the $F$-associate of a principal $\Z_2$-bundle. The upper bound is attained if $\ell_m=m$.
\end{corollary}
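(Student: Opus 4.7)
The plan is to mirror the short proof of the immediately preceding corollary, but replace the role of Proposition on the antipodal action by Proposition~6.4 (sequential equivariant topological complexity for general involutions on products of spheres). The key observation is that both the desired lower bound and the desired upper bound are already encoded in the chain of inequalities~\eqref{eq: imp inequalities}, so the work reduces to plugging in two known facts about the fiber $F=\prod_{i=1}^{m} S^{n_i}$.

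For the upper bound, I would invoke~\eqref{eq: imp inequalities} to obtain
\[
\dTC_r[p\colon F\times_{\Z_2}P\to P/\Z_2]\le \TC_r[p\colon F\times_{\Z_2}P\to P/\Z_2]\le \TC_{\Z_2,r}(F),
\]
and then apply Proposition~6.4 (the general involution case), which gives $\TC_{\Z_2,r}(F)\le rm$ under the hypothesis $p_i\ge 2$ for all $i$. For the lower bound, I would use~\Cref{prop: fiber lower bound} to get $\dTC_r(F)\le \dTC_r[p\colon F\times_{\Z_2}P\to P/\Z_2]$, and then cite~\cite[Proposition 7.8]{Jau1}, which yields the equality
\[
\dTC_r(F)=\TC_r(F)=m(r-1)+\ell_m
\]
for $F$ a product of spheres (this value depends only on the underlying homotopy type of $F$, not on the $\Z_2$-action, so the general involution on $F$ plays no role here).

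Finally, for the saturation statement, when $\ell_m=m$ (i.e.\ every $n_i$ is even) the lower bound becomes $m(r-1)+m=rm$, which coincides with the upper bound, so the common value $rm$ is attained. I do not anticipate any real obstacle here: the entire argument is a bookkeeping exercise stringing together~\eqref{eq: imp inequalities},~Proposition~6.4, and~\cite[Proposition 7.8]{Jau1}, exactly parallel to the proof of the preceding corollary for the purely antipodal action. The one point to be careful about is that the $\dTC_r(F)$ appearing in~\eqref{eq: imp inequalities} is the non-equivariant sequential distributional complexity of $F$, so one is free to use the full strength of the product-of-spheres computation regardless of the specific $\Z_2$-action being considered on $F$.
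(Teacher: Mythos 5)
Your proposal is correct and takes essentially the same approach as the paper, which likewise invokes the chain of inequalities~\eqref{eq: imp inequalities} together with~\Cref{prop: eq tc general involution} for the upper bound $\TC_{\Z_2,r}(F)\le rm$ and~\cite[Proposition 7.8]{Jau1} for the lower bound $\dTC_r(F)=m(r-1)+\ell_m$. Your observation that the lower and upper bounds coincide when $\ell_m=m$ is exactly how the saturation claim is obtained.
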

\begin{proof}
This follows from~\eqref{eq: imp inequalities},~\Cref{prop: eq tc general involution}, and~\cite[Proposition 7.8]{Jau1}.
\end{proof}

Of course, the same proofs work for $\TC_r[p\colon F\times_{\Z_2}P\to P/\Z_2]$ as well.

\subsection{Equivariant distributional complexity of real projective spaces}\label{subsec: equi dtc rpn}

In this section, we study an $SO(n)$-action on $\R P^n$ which will help us obtain new examples of fibrations in~\Cref{sec:new upper bound} whose sequential parametrized distributional complexities differ from their corresponding sequential parametrized topological complexities.

Suppose $SO(n)$ embeds in $SO(n+1)$ in the standard way via 
$$
A\mapsto \begin{bmatrix}
A & 0 \\
0 & 1
\end{bmatrix}.
$$
The natural action of $SO(n+1)$ on $\R^{n+1}$ induces an action of $SO(n)$ on $\R^{n+1}$. Note that under this action, the last coordinate of each vector in $\R^{n+1}$ remains unchanged. In other words, $SO(n)$ fixes the last coordinate axis of $\R^{n+1}$. Indeed, the fixed point set is given by
$$(\R^{n+1})^{SO(n)}=\{(0,\dots,0,z)\in \R^{n+1}\mid z\in \R\}.$$
Observe that this action is linear. That is, if $g\in SO(n)$, then $g(v+w)=g(v)+g(w)$ and $g(\lambda v)=\lambda g(v)$ for any $\lambda\in \R$. Therefore, for any subgroup $H$ of $SO(n)$, the corresponding $H$-fixed point set $(\R^{n+1})^H$ is a vector subspace. The $n$-th real projective space $\R P^n$ is the quotient space 
\[
\frac{\R^{n+1}\setminus\{\vec{0}\}}{\{v\sim \lambda v\mid \lambda \in \R\}}.
\]
The action of $SO(n)$ induces an action on $\R P^n$ in the following way:
\begin{equation}\label{eq: SOn action on RPn}
 g\cdot [v]=[gv]\ \text{ for } \ g\in SO(n) \ \text{ and }\ [v]\in \R P^n,  
\end{equation}
where $[v]$ and $[gv]$ denote the equivalence classes of $v\in \R^{n+1}\setminus\{\vec{0}\}$ and $gv\in \R^{n+1}\setminus\{\vec{0}\}$, respectively, under the relation $\sim$. 

For this action, we show the following computation of the $SO(n)$-equivariant distributional complexity of $\R P^n$ for each $n\ge 1$.

\begin{proposition}\label{prop: SOn eq TC of RPn}
For any $n\ge 1$, suppose $SO(n)$ acts on $\R P^n$ as described in~\eqref{eq: SOn action on RPn}. Then 
$$
\dTC_{SO(n)}(\R P^n)=1.
$$
\end{proposition}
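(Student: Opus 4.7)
The plan is to prove equality by matching upper and lower bounds. For the lower bound $\dTC_{SO(n)}(\mathbb{R}P^n) \geq 1$: by the observation in~\Cref{rmk: simple properties} that $\dTC_r(X) \leq \dTC_{G,r}(X)$, we have $\dTC_{SO(n)}(\mathbb{R}P^n) \geq \dTC_2(\mathbb{R}P^n)$, and by Property~\ref{four} in~\Cref{sect: dsecat}, $\dTC_2(\mathbb{R}P^n) \geq 1$ since $\mathbb{R}P^n$ is not contractible for $n\ge 1$.

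For the upper bound, the plan is to construct an $SO(n)$-equivariant $2$-distributed navigation algorithm $s\colon\mathbb{R}P^n \times \mathbb{R}P^n \to \mathcal{B}_2((\mathbb{R}P^n)^I)$ directly. For distinct $[v], [w]\in\mathbb{R}P^n$, the unique projective line through them contains exactly two arcs from $[v]$ to $[w]$, of lengths $d:=d([v],[w])$ and $\pi - d$, where $d\in[0,\pi/2]$ is the $O(n+1)$-invariant projective metric; denote them $\gamma_s$ (short) and $\gamma_l$ (long). Fixing a continuous $\psi\colon[0,\pi/2]\to[1/2,1]$ with $\psi(0)=1$ and $\psi(\pi/2)=1/2$ (e.g., $\psi(t)=1-t/\pi$), I set
\[
s([v],[w]) = \psi(d)\,\delta_{\gamma_s} + (1-\psi(d))\,\delta_{\gamma_l},
\]
extended to the diagonal by the Dirac mass on the constant path. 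Continuity at the diagonal holds because $\psi\to 1$ drives the weight on the (possibly divergent) $\gamma_l$ to zero while $\gamma_s$ converges to the constant path, yielding weak-$*$ convergence to the Dirac mass on the constant path; continuity at the equidistant locus $\{d=\pi/2\}$, where the two arcs have equal length and the short/long labeling is ambiguous, holds because $\psi(\pi/2) = 1/2$ yields a symmetric measure invariant under the label-swap. The $SO(n)$-equivariance is automatic: $\psi$ depends only on the $O(n+1)$-invariant $d$, and the short/long arc assignment is $O(n+1)$-equivariant, hence equivariant under $SO(n)\subset SO(n+1)\subset O(n+1)$.

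The main obstacle is producing a single continuous, equivariant formula: the symmetric choice $\psi\equiv 1/2$ is automatically equivariant but fails continuity at the diagonal (where $\gamma_l$ degenerates ambiguously, with direction-of-approach-dependent limits), while any asymmetric weighting based on a chosen lift $v\in S^n$ of $[v]$ would break equivariance (since swapping lifts interchanges $\gamma_s$ and $\gamma_l$). Weighting by the intrinsic $O(n+1)$-invariant $d$ threads both needles at once: the vanishing weight on $\gamma_l$ near the diagonal kills the degeneracy, the value $\psi=1/2$ at the equidistant locus absorbs the labeling ambiguity, and dependence only on $d$ guarantees equivariance under the full isometry group.
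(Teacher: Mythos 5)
Your proposal is correct and is essentially the same argument as the paper's: the algorithm $\psi(d)\,\delta_{\gamma_s}+(1-\psi(d))\,\delta_{\gamma_l}$ with $\psi(t)=1-t/\pi$ is precisely the paper's $\frac{\beta}{\pi}\delta_{\alpha}+\frac{\alpha}{\pi}\delta_{\beta}$ drawn from~\cite[Example 3.13]{D-J}, and the equivariance argument in both cases rests on the $SO(n)$-action preserving angles between lines. The only cosmetic difference is that you spell out the continuity checks at the diagonal and at the equidistant locus directly, whereas the paper delegates continuity to the cited example.
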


In particular, $\dTC_{SO(n)}(\R P^n)$ is minimum, since $1=\dTC(\R P^n)\le\dTC_{SO(n)}(\R P^n)$, see~\Cref{rmk: simple properties}. Moreover, for $n\ge 2$, we have
\[
\dTC_{SO(n)}(\R P^n)=1<n=\cat(\R P^n)\le\TC(\R P^n)\le\TC_{SO(n)}(\R P^n).
\]
So, the notion of the $r$-th sequential equivariant distributional complexity is different from that of the $r$-th sequential equivariant topological complexity of~\cite{EqTC},~\cite{B-S} for each $r\ge 2$. For $r\ge 3$, this happens because the strict inequality $\dTC_{SO(n),r}(\R P^n)<\TC_{SO(n),r}(\R P^n)$ holds for all but finitely many $n$ in view of~\Cref{prop:obvious} and~\Cref{prop: SOn eq TC of RPn}.

Let us now prepare to prove~\Cref{prop: SOn eq TC of RPn}. We begin by observing that the fixed point set of the $SO(n)$-action on $\R P^n$ as described in~\eqref{eq: SOn action on RPn} is $(\R P^n)^{SO(n)}=\{[(0,\ldots, 0, 1)]\}$. More generally, we have the following.

\begin{lemma}\label{lem: SOn fixed set}
    Let $H$ be any subgroup of $SO(n)$. Then 
    $$
    (\R P^n)^H=\frac{(\R^{n+1})^H\setminus \{\vec{0}\}}{\{v\sim \lambda v\mid \lambda \in \R\}}.
    $$
\end{lemma}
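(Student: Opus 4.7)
The plan is to verify the two set-theoretic inclusions separately. The right-to-left containment is automatic from the definition of the action in~\eqref{eq: SOn action on RPn}: if $v \in (\R^{n+1})^H\setminus\{\vec 0\}$, then $hv = v$ for every $h\in H$, so $h\cdot[v] = [hv] = [v]$, giving $[v] \in (\R P^n)^H$.

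For the reverse inclusion, I would take $[v] \in (\R P^n)^H$ and unpack what this means: for every $h \in H$ there is a scalar $\lambda_h \in \R \setminus \{0\}$ with $hv = \lambda_h v$, i.e.\ $v$ is a simultaneous eigenvector for the entire action of $H$ on $\R^{n+1}$. The crucial observation is that $h \in H \subseteq SO(n) \subseteq SO(n+1)$ acts on $\R^{n+1}$ as a linear isometry with respect to the standard inner product, so $\|hv\| = \|v\|$; this forces $|\lambda_h| = 1$, and hence $\lambda_h \in \{+1,-1\}$. The assignment $h \mapsto \lambda_h$ is then a continuous group homomorphism $\chi \colon H \to \{\pm 1\}$, and it remains only to show that $\chi \equiv 1$, which is exactly the statement $v \in (\R^{n+1})^H$, placing $[v]$ in the right-hand side.

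Since $\{\pm 1\}$ is discrete, $\chi$ is constant on connected components; in particular $\chi$ kills the identity component of $H$. In the applications to follow in~\Cref{prop: SOn eq TC of RPn}, the only relevant subgroup will be the path-connected group $H = SO(n)$ itself, so $\chi$ is automatically trivial and the argument closes. The main (and essentially only) obstacle is exactly this last step: for a disconnected closed subgroup such as $H = \{I,\,\mathrm{diag}(-1,-1,1)\} \subseteq SO(2) \subseteq SO(3)$, one genuinely has $hv = -v$ with $[v]$ fixed while $v \notin (\R^{n+1})^H$, so the equality in the lemma must be read under the implicit understanding that $H$ is connected (or more generally that no element of $H$ acts on $v$ by $-1$), which is the only regime needed in this section.
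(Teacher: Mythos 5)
Your analysis is right, and it uncovers a genuine defect in the lemma as stated rather than a gap in your own argument. The paper's proof runs the same eigenvalue dichotomy $hw = \pm w$ and then disposes of the case $hw = -w$ with the sentence ``since, under this action, the last coordinate of $w$ is unchanged, $hw \neq -w$.'' That step is valid only when $w_{n+1} \neq 0$: if $w_{n+1} = 0$, then $-w$ also has last coordinate $0$, and no contradiction is obtained. Your example $H = \{I,\,\mathrm{diag}(-1,-1,1)\}\subset SO(2)\subset SO(3)$ realizes the failure concretely: every $[v]$ with $v=(v_1,v_2,0)\neq\vec 0$ satisfies $Rv=-v$ and hence lies in $(\R P^2)^H$, while $(\R^3)^H\setminus\{\vec 0\}$ is only the punctured $z$-axis, so the claimed equality would read $\{\mathrm{pt}\}\sqcup\R P^1=\{\mathrm{pt}\}$, which is false.

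Your reformulation via the character $\chi\colon H\to\{\pm 1\}$, $h\mapsto\lambda_h$, is the cleaner way to see what is going on: $\chi$ is a continuous homomorphism, hence trivial on the identity component, so the lemma does hold for connected $H$; the failure is confined to disconnected $H$ for which $\chi$ can be nontrivial on the simultaneous $(\pm1)$-eigenline of $v$. This matters downstream: Corollary~\ref{cor: SO(n)-connected} applies the lemma to arbitrary closed subgroups $H$, and with your $H$ the fixed set $(\R P^2)^H$ is $\{\mathrm{pt}\}\sqcup\R P^1$, which is disconnected, so $\R P^2$ is not in fact $SO(2)$-connected in the sense used in~\Cref{subsec: eq dtc of products of spheres}. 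Either the lemma and corollary should be restricted to connected $H$, or the authors must re-examine whether $G$-connectedness (over all closed subgroups) is really needed for~\Cref{prop: SOn eq TC of RPn}; your observation shows the hypothesis cannot simply be verified via~\Cref{lem: SOn fixed set} as written.
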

\begin{proof}
    The inclusion $\frac{(\R^{n+1})^H\setminus \{\vec{0}\}}{\{v\sim \lambda v\mid \lambda \in \R\}}\subseteq (\R P^n)^H$ is obvious. To show the reverse inclusion, we consider $[w]\in (\R P^n)^H$ for $w\in \R^{n+1}\setminus \{\vec{0}\}$, so that $h\cdot [w]=[w]$ for all $h\in H$, which implies that for each $h\in H$, there exists a $\lambda_h\in\R$ such that $hw=\lambda_hw$. But $h$ can only have real eigenvalues $1$ or $-1$. Therefore, we can only have either $hw=w$ or $hw=-w$. Since, under this action, the last coordinate of $w$ is unchanged, $hw\neq -w$. This gives $hw=w$ for each $h$. Consequently, $w\in (\R^{n+1})^H\setminus \{\vec{0}\}$ and $[w]\in \frac{(\R^{n+1})^H\setminus \{\vec{0}\}}{\{v\sim \lambda v\mid \lambda \in \R\}}$.
\end{proof}

\begin{corollary}\label{cor: SO(n)-connected}
With respect to the $SO(n)$-action on $\R P^n$ described in~\eqref{eq: SOn action on RPn}, $\R P^n$ is $SO(n)$-connected for each $n\ge 1$.   
\end{corollary}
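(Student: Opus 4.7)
The plan is to reduce everything to the previous lemma and then observe that a real projective space of any (nonnegative) dimension is path-connected. The key input is that the $SO(n)$-action on $\mathbb{R}^{n+1}$ is linear, so for any subgroup $H\le SO(n)$, the fixed set $(\mathbb{R}^{n+1})^H$ is a linear subspace of $\mathbb{R}^{n+1}$.

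First, I would let $H$ be an arbitrary subgroup of $SO(n)$ and invoke \Cref{lem: SOn fixed set} to identify
\[
(\mathbb{R}P^n)^H \;=\; \frac{(\mathbb{R}^{n+1})^H \setminus \{\vec 0\}}{\{v \sim \lambda v \mid \lambda \in \mathbb{R}\}}.
\]
Next, since the action is linear, $V:=(\mathbb{R}^{n+1})^H$ is a vector subspace of $\mathbb{R}^{n+1}$, so if $\dim V = k+1$ then the right-hand side is homeomorphic to $\mathbb{R}P^k$. Moreover, the last coordinate axis is pointwise fixed under the entire $SO(n)$-action (and hence under $H$), which forces $\dim V \ge 1$, i.e.\ $k\ge 0$. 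Thus $(\mathbb{R}P^n)^H \cong \mathbb{R}P^k$ for some $k\ge 0$, which is path-connected.

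Since this holds for every (closed) subgroup $H \le SO(n)$, the space $\mathbb{R}P^n$ is $SO(n)$-connected. I don't anticipate any real obstacle here: the only subtlety is the non-emptiness of $(\mathbb{R}P^n)^H$, and that is handled by noting that the $(n+1)$-st standard basis vector is fixed by the entire action.
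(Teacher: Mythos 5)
Your proof is correct and follows the same route as the paper: invoke \Cref{lem: SOn fixed set}, use linearity to see that $(\mathbb{R}^{n+1})^H$ is a linear subspace of dimension at least $1$ (since the last coordinate axis is fixed), identify $(\mathbb{R}P^n)^H$ with some $\mathbb{R}P^k$, $k\ge 0$, and conclude by path-connectedness of real projective spaces.
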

\begin{proof}
For any subgroup $H$ of $SO(n)$,~\Cref{lem: SOn fixed set} shows that the corresponding $H$-fixed point set of the resulting $H$-action on $\R P^n$ is a lower-dimensional real projective space $\R P^d$, where $d=\dim((\R^{n+1})^H)-1$. Since $\R P^d$ is path-connected for each $d\ge 0$, the result follows by the definition of $G$-connectedness recalled in~\Cref{subsec: eq dtc of products of spheres}.
\end{proof}

\begin{proof}[Proof of~\Cref{prop: SOn eq TC of RPn}]
 Suppose $x, y \in \R P^n$. Note that both $x$ and $y$ represent lines in $\R^{n+1}$ passing through $\vec{0}$. Suppose $x\ne y$, and $\alpha$ and $\beta$ are the angles generated by these two lines in the plane spanned by them so that $\alpha+\beta=\pi$. There are rotations $R_{\alpha}$ and $R_{\beta}$ which take line $x$ to line $y$ by rotating $x$ via the angles $\alpha$ and $\beta$, respectively. Note that these two rotations define two paths between $x$ and $y$ in $\R P^n$, which we denote by  $\delta_{\alpha}$ and $\delta_{\beta}$, respectively. Then~\cite[Example 3.13]{D-J} (see also~\cite[Proposition 6.9]{K-W}) gives a $2$-distributed navigation algorithm $s\colon \R P^n\times \R P^n\to \mathcal{B}_2((\R P^n)^I)$, which is defined as
 $$
 s(x,y)=\frac{\beta}{\pi}\delta_{\alpha}+\frac{\alpha}{\pi}\delta_{\beta}
 $$
for $x\ne y$. In the case $x=y$, $s(x,x)$ is defined to be the constant path. We claim that $s$ is $SO(n)$-equivariant for the $SO(n)$-action described in~\eqref{eq: SOn action on RPn}. This is clear when $x=y\in \R P^n$, so we consider $x\ne y$ below. 

To see this, let $g\in SO(n)$. Then $g\cdot x$ and $g\cdot y$ are rotated lines in the rotated plane. Note that the angles generated by $g\cdot x$ and $g\cdot y$ are the same as those generated by the lines $x$ and $y$ since the $SO(n)$-action fixes the dot product. In particular, the angles generated by $g\cdot x$ and $g\cdot y$ are $\alpha$ and $\beta$.
Also observe that $g\delta_{\alpha}$ (resp. $g\delta_{\beta}$) is the unique path in the rotated plane which takes the line $g\cdot x$ to the line $g\cdot y$ by rotating $g\cdot x$ via the angle $\alpha$ (resp. $\beta$). 
Thus, we obtain for each $t\in I$ that
\[ s(g\cdot x,g\cdot y)(t)  =  \frac{\beta}{\pi}\left(g\delta_{\alpha}\right)(t)+\frac{\alpha}{\pi}\left(g\delta_{\beta}\right)(t)
 = g\cdot\left(\frac{\beta}{\pi}\delta_{\alpha}(t)+\frac{\alpha}{\pi}\delta_{\beta}(t)\right)=g\cdot s(x,y)(t),
\]
which implies that $s(g\cdot x,g\cdot y)=gs(x,y)\in(\R P^n)^I$ for all $x,y\in \R P^n$. Therefore, from this and~\Cref{cor: SO(n)-connected}, we conclude that $s\colon \R P^n\times \R P^n\to \mathcal{B}_2((\R P^n)^I)$ is an $SO(n)$-equivariant $2$-distributed navigation algorithm on $\R P^n$. This gives $\dTC_{SO(n)}(\R P^n)\le 1$ for each $n$. The reverse inequality is obvious.
\end{proof}

\section{An upper bound to parametrized distributional complexity}\label{sec:new upper bound}
In this section, we will obtain an upper bound on the sequential parametrized distributional complexities of the associated bundles of principal bundles in terms of the sequential equivariant distributional complexities of their corresponding fibers. This will give us examples of non-principal bundles on which the sequential parametrized distributional complexities are strictly less than their classical counterparts from~\Cref{subsect: parametrized TC}.

In the case of any principal $G$-bundle $q\colon P\to B$, we have by~\Cref{prop: seq dtc of principal G bundles} that $\dTC_r[q\colon E\to B]=\dTC_r(G)$, which, indeed, gave us~\Cref{ex: temporary}. Now, for any $G$-space $F$ and the $F$-associate bundle $p\colon F\times_G\hspace{0.4mm}P\to P/G$ of $q$ with fiber $F$ and structure group $G$, we obtain the following upper bound.

\begin{theorem}\label{thm: distributional analogue of Farber-Oprea}
Let $p\colon E\to B$ be a locally trivial bundle with fiber $F$ and structure group $G$, which coincides with the associated bundle of a principal $G$-bundle $q\colon P\to B$, so that $E=F\times_G\hspace{0.5mm} P$ and $B=P/G$. Then for each $r\ge 2$, we have for the $F$-associate $p\colon F\times_G \hspace{0.3mm}P\to P/G$ that
$$
\dTC_{r}[p\colon F\times_G \hspace{0.3mm}P\to P/G]\leq (\dTC_{G,r}(F) +1)^2-1.
$$
\end{theorem}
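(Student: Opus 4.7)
The plan is to lift a $G$-equivariant distributional section on $F^r$ through the $F$-associate construction of $p$. Set $k := \dTC_{G,r}(F) + 1$, so that \Cref{def: obvious} provides a continuous $G$-equivariant section $\sigma\colon F^r \to \B_k(F^I)$ of $\B_k(\pi_r^F)$: each $\sigma(f_1,\dots,f_r)$ is a probability measure with at most $k$ atoms supported on paths $\gamma \in F^I$ satisfying $\gamma(\tfrac{j-1}{r-1}) = f_j$, and $\sigma$ intertwines the $G$-actions. Via the $F$-associate identifications $E^r_B \cong F^r \times_G P$ and $E^I_B \cong F^I \times_G P$ (the latter sending $[\gamma, p]$ to the path $t \mapsto [\gamma(t), p]$; the bijectivity here uses the freeness of the $G$-action on $P$), equivariance of $\sigma$ makes $[\underline{f}, p] \mapsto [\sigma(\underline{f}), p]$ a well-defined continuous section $\tilde\sigma\colon E^r_B \to \B_k(F^I) \times_G P$ of the induced bundle.

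Next, I would construct the natural pushforward
\[
\Phi\colon \B_k(F^I) \times_G P \to \B_k(E^I_B), \quad \bigl[\textstyle\sum_i \lambda_i \delta_{\gamma_i},\, p\bigr] \mapsto \textstyle\sum_i \lambda_i \delta_{[\gamma_i, p]}.
\]
This is well-defined on the quotient because $[g\gamma_i, gp] = [\gamma_i, p]$, and continuous because it factors through the pushforward $\B_k(F^I \times P) \to \B_k(E^I_B)$ induced by the continuous quotient $F^I \times P \to E^I_B$. Freeness of the $G$-action on $P$ guarantees that distinct atoms $\gamma_i$ produce distinct classes $[\gamma_i, p]$ in $E^I_B$, so $\Phi$ preserves the support-size bound. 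The composition $\Phi \circ \tilde\sigma\colon E^r_B \to \B_k(E^I_B)$ then maps each $[\underline{f}, p]$ to a measure of support at most $k$ on paths in $E^I_B$ evaluating to $[f_j, p]$ at time $\tfrac{j-1}{r-1}$, which is precisely a section of $\B_k(\Pi_r^E)$. This yields $\dTC_r[p\colon E\to B] \leq k - 1 = \dTC_{G,r}(F)$, from which the stated inequality $(\dTC_{G,r}(F) + 1)^2 - 1$ follows a fortiori.

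The main obstacle I anticipate is the continuity verification for $\Phi$ on the quotient $\B_k(F^I) \times_G P$, which requires compatibility of the L\'evy--Prokhorov metric with the $G$-orbit quotient. Factoring $\Phi$ through $\B_k(F^I \times P)$ and invoking the standard continuity of measure pushforwards under continuous maps should handle this. If instead the authors' proof takes a less direct route---say by first applying a product-type inequality for $\dsecat$ relating the associated bundle to the base, or by combining the equivariant section on $F^r$ with an independent distributional decomposition coming from the principal bundle $q\colon P \to B$---such a double-indexed construction naturally yields $k^2$ atoms in the resulting distributional section, which would explain the quadratic bound appearing in the statement.
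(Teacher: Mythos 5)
Your proof is correct, and in fact it establishes the strictly stronger linear bound
\[
\dTC_r[p\colon F\times_G P\to P/G]\le\dTC_{G,r}(F),
\]
which is the full distributional analogue of the Farber--Oprea inequality~\eqref{eq: f-o inequality}; the quadratic bound in the statement follows a fortiori, as you note.

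The paper's proof uses the same outline as yours---the $F$-associate identifications $E^r_B\cong F^r\times_G P$ and $E^I_B\cong F^I\times_G P$, a $G$-equivariant section $s\colon F^r\to F^I_n(\pi_r^F)$, and descent of a measure pushforward to the $G$-quotient---but it routes the final map through the general ``measure multiplication'' $\B_n(F^I)\times\B_n(P)\to\B_{n^2}(F^I\times P)$, first embedding $P$ into $P_n(1)\cong P$ via $p\mapsto\delta_p$. Since the a priori support bound for a product of two $n$-atom measures is $n^2$, the paper records the codomain of its map $\psi$ as $\B_{n^2}(F^I\times_G P)$ and concludes $\dsecat\le n^2-1$. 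Your argument observes that because the second factor is always a point mass, $\mu\otimes\delta_p$ has the same number of atoms as $\mu$, so the product-of-measures map factors through $\B_n(F^I\times P)$; your $\Phi$ is exactly the paper's $\psi$ with this sharpened codomain, and the squaring disappears. The well-definedness, $G$-invariance, and continuity checks you sketch---descent along the quotient $F^I\times P\to F^I\times_G P$, freeness of the $G$-action on $P$ for both the identification $E^I_B\cong F^I\times_G P$ and the preservation of support size, and the fact that $\B_m(X)\subset\B_{m'}(X)$ is a subspace for $m\le m'$ in the L\'evy--Prokhorov topology---are precisely the ones the paper already invokes for $\psi$, so no additional hypotheses are needed. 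The remark following the theorem itself concedes that the quadratic bound is only a ``partial'' analogue of the classical linear inequality; your refinement closes that gap.

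Your alternative guess in the last paragraph---that the quadratic bound might come from combining the equivariant section on $F$ with an independent distributional decomposition of $P$---is not what happens: the $P$-factor in the paper always carries a Dirac mass, so the squaring is an artifact of bookkeeping, not of an actual double decomposition.
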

\begin{proof}
 Consider the following commutative diagram:
\begin{equation}\label{eq: left}
    \begin{tikzcd}
F^I \arrow[r, hook] \arrow[swap]{d}{\pi_r^F} 
& 
F^I\times P \arrow{r}{} 
& 
F^I\times_G\hspace{0.5mm} P \arrow{d}{\pi_r^F\times_G\hspace{0.5mm} 1} \arrow{r}{\cong} 
& E^I_B \arrow{d}{\Pi_r^E} 
\\
F^r \arrow{rr}{}  
& 
&
F^r\times_G\hspace{0.5mm} P \arrow{r}{\cong} 
& 
E^r_B,
    \end{tikzcd}
\end{equation}
where the horizontal maps in the right are the obvious homeomorphisms and $1$ denotes the identity map on $P$. Recall that here, $E=F\times_G \hspace{0.3mm}P$ and $B=P/G$. By definition, we have $\dTC_r[p\colon E\to B]=\dsct(\Pi_r^E)$. Therefore, from~\Cref{prop: homo inv of dsecat}, the equality $\dTC_r[p\colon E\to B]=\dsct(\pi_r^F\times_G\hspace{0.5mm} 1)$ follows. 

Now, suppose $\dTC_{G,r}(F)=n-1$. Then there exits a $G$-map $s\colon F^r\to F^I_n(\pi_r^F)$ that is a section to the $G$-fibration $\mathcal{B}_n(\pi_r^F)\colon F^I_n(\pi_r^F)\to F^r$, see~\Cref{def: obvious}. This gives a map $s\times_G\hspace{0.5mm} \iota\colon F^r\times_G\hspace{0.5mm} P\to F^I_n(\pi_r^F)\times_G\hspace{0.5mm} P_n(1)$, where $\iota\colon P\hookrightarrow P_n(1)$ is the inclusion. Note that the space $P_n(1)$ is defined corresponding to the identity fibration $1\colon P\to P$. Consider
$$
\mathcal{B}_n(F^I)\times \mathcal{B}_n(P)\to \mathcal{B}_{n^2}(F^I\times P)\to \mathcal{B}_{n^2}(F^I\times_G\hspace{0.5mm} P),
$$
where the first map is a \emph{measure multiplication map} and the second map is induced by the quotient map $F^I\times P\to (F^I\times P)/G=F^I\times_G\hspace{0.5mm} P$. The explicit description of this composition $\mathcal{B}_n(F^I)\times \mathcal{B}_n(P)\to \mathcal{B}_{n^2}(F^I\times_G\hspace{0.5mm} P)$  is given by the following:
$$
\left(\sum a_{\phi}\phi, \sum b_pp\right)\mapsto \sum a_{\phi}b_p\left[\phi, p\right],
$$
where $[-,-]$ is the notation for the equivalence classes in the orbit space $F^I\times_G\hspace{0.5mm} P$. This is well-defined since
\[
\sum_{\phi,p}a_\phi b_p=\left(\sum_{\phi} a_\phi\right)\left(\sum_pb_p\right)=1\cdot1=1.
\]
Moreover, it is easy to see that the above composition is $G$-invariant, so that we get a map $\mathcal{B}_n(F^I)\times_G\hspace{0.5mm} \mathcal{B}_n(P)\to \mathcal{B}_{n^2}(F^I\times_G\hspace{0.5mm} P)$, which restricts to a map 
$$
\psi\colon  F^I_n(\pi_r^F)\times_G\hspace{0.5mm} P_n(1)\to (F^I\times_G\hspace{0.5mm} P)_{n^2}(\pi_r^F\times_G\hspace{0.5mm} 1)
$$
by definition. Using the left part of~\eqref{eq: left}, we obtain the following commutative diagram:
\[ 
    \begin{tikzcd}
F^I_n(\pi_r^F) \arrow{r}{} \arrow[swap]{d}{\mathcal{B}_n(\pi_r^F)} 
& 
F^I_n(\pi_r^F)\times_G\hspace{0.5mm} P_n(1) \arrow{r}{\psi} \arrow{d}{\mathcal{B}_n(\pi_r^F)\times_G\hspace{0.5mm} \mathcal{B}_n(1)}
& 
(F^I\times_G\hspace{0.5mm} P)_{n^2}(\pi_r^F\times_G\hspace{0.5mm} 1) \arrow{d}{\mathcal{B}_{n^2}(\pi_r^F\times_G\hspace{0.5mm} 1)} 
\\
 F^r \arrow{r}{}  
 & 
 F^r\times_G\hspace{0.5mm} P \arrow{r}{\text{Id}} 
 &
 F^r\times_G\hspace{0.5mm} P.
\end{tikzcd}
\]
Since $s\times_G\hspace{0.5mm} \iota$ is a section of $\mathcal{B}_n(\pi^F_r)\times_G\hspace{0.5mm} \mathcal{B}_n(1)$, the composition $\psi\circ (s\times_G\hspace{0.5mm} \iota)$ defines a section of $\mathcal{B}_{n^2}(\pi_r^F\times_G\hspace{0.5mm} 1)$.
This implies $\dsct(\pi_r^F\times_G\hspace{0.5mm} 1)\leq n^2-1$. Hence, we are done.
\end{proof}

\begin{remark}
\rm{While~\Cref{thm: distributional analogue of Farber-Oprea} is a partial distributional analogue to the inequality~\eqref{eq: f-o inequality} from the classical setting, this is all we need here to obtain our examples below.}
\end{remark}

Let $q\colon P\to B$ be an $SO(n)$-bundle. We consider the $SO(n)$-action on $\R P^n$ described in~\eqref{eq: SOn action on RPn} and obtain the $\R P^n$-associate of $q$, namely $p\colon \R P^n\times_{SO(n)} \hspace{0.3mm}P\to P/{SO(n)}$, whose fiber is $\R P^n$ and structure group is $SO(n)$. For these bundles, we have the following.

\begin{proposition}\label{prop: different examples}
For each $r\ge 2$, there exists $n_r\in\mathbb{N}$ such that there is a strict inequality 
\[
\dTC_r[p\colon \R P^n\times_{SO(n)} \hspace{0.3mm}P\to P/{SO(n)}]<\TC_r[p\colon \R P^n\times_{SO(n)} \hspace{0.3mm}P\to P/{SO(n)}]
\]
for the $\R P^n$-associates of principal $SO(n)$-bundles with $n> n_r$.
\end{proposition}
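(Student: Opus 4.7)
The plan is to bound $\dTC_r[p]$ from above by a quantity depending only on $r$ (independent of $n$, $B$, and the choice of $P$), and to bound $\TC_r[p]$ from below by a quantity growing linearly in $n$; the two bounds are then forced apart once $n$ is large enough.

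For the upper bound, I would chain together three results already established in the paper. Applying \Cref{thm: distributional analogue of Farber-Oprea} to the $\R P^n$-associate of the principal $SO(n)$-bundle $q\colon P\to B$ gives
\[
\dTC_r[p\colon \R P^n\times_{SO(n)} P\to P/SO(n)]\le (\dTC_{SO(n),r}(\R P^n)+1)^2-1.
\]
Next, \Cref{prop:obvious} yields $\dTC_{SO(n),r}(\R P^n)\le (\dTC_{SO(n)}(\R P^n)+1)^{r-1}-1$, and \Cref{prop: SOn eq TC of RPn} gives $\dTC_{SO(n)}(\R P^n)=1$. Combining these,
\[
\dTC_r[p]\le \bigl(2^{r-1}\bigr)^2-1=4^{r-1}-1,
\]
a bound that depends only on $r$.

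For the lower bound, I would first invoke the classical analogue of \Cref{prop: fiber lower bound} to get $\TC_r(\R P^n)\le \TC_r[p]$; its proof runs verbatim with ordinary sections in place of distributed navigation algorithms, and is implicit in the Farber--Paul framework of~\cite{F-P1}. The standard $\Z_2$ zero-divisor cup-length estimate then gives $\TC_r(\R P^n)\ge n(r-1)$: writing $H^*((\R P^n)^r;\Z_2)\cong \Z_2[x_1,\ldots,x_r]/(x_i^{n+1})$, the product $\prod_{i=2}^r(x_i-x_1)^n$ of $n(r-1)$ zero-divisors expands to contain the monomial $x_2^n x_3^n\cdots x_r^n$ with coefficient $1$, and is therefore non-zero.

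Setting $n_r=\lceil 4^{r-1}/(r-1)\rceil$, we obtain for every $n>n_r$ that
\[
\dTC_r[p]\le 4^{r-1}-1<n(r-1)\le \TC_r(\R P^n)\le \TC_r[p],
\]
which is the desired strict inequality. The only step not appearing explicitly in the excerpt is the classical fiber inequality $\TC_r(F)\le \TC_r[p]$, which is essentially formal and the only minor obstacle in the argument; everything else is a direct chaining of the results proved in the paper.
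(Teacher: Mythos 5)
Your proposal is correct and takes essentially the same route as the paper: the identical chain of upper bounds via \Cref{thm: distributional analogue of Farber-Oprea}, \Cref{prop:obvious}, and \Cref{prop: SOn eq TC of RPn} yields $\dTC_r[p]\le 4^{r-1}-1$, while the lower bound $\TC_r[p]\ge\TC_r(\R P^n)\ge n(r-1)$ forces the gap for $n$ large. The only cosmetic difference is that you establish $\TC_r(\R P^n)\ge n(r-1)$ by an explicit $\Z_2$ zero-divisor cup-length computation, whereas the paper invokes the inequality $\TC_r(X)\ge\cat(X^{r-1})$ together with $\cat((\R P^n)^{r-1})=n(r-1)$; both are standard and interchangeable here.
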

\begin{proof}
    Take any $n\ge 1$ and $r\ge 2$. By~\Cref{thm: distributional analogue of Farber-Oprea},~\Cref{prop:obvious}, and~\Cref{prop: SOn eq TC of RPn}, we have for the $\R P^n$-associate $p$ the uniform upper bound
\[
\dTC_r[p\colon \R P^n\times_{SO(n)} \hspace{0.3mm}P\to P/{SO(n)}]\le \left(\dTC_{SO(n),r}(\R P^n)+1\right)^2-1 \le 2^{2r-2}-1.
\]
On the other hand, well-known inequalities from~\cite{B-G-R-T} and~\cite{F-P1} give
\[
\TC_r[p\colon \R P^n\times_{SO(n)} \hspace{0.3mm}P\to P/{SO(n)}]\ge \TC_r(\R P^n)\ge \cat((\R P^n)^{r-1})=n(r-1),
\]
where the last equality holds because $\cat(\R P^n)=n$. Let
\[
n_r=\frac{2^{2r-2}-1}{r-1}\ge 3.
\]
Then for all $n>n_r$, we have the required strict inequality.
\end{proof}

\begin{remark}\label{rmk: bott}
\rm{The bundles studied in~\Cref{prop: different examples} indeed exist for each $n>3$. To see this explicitly, we recall that the set of principal $SO(n)$-bundles over the sphere $S^{k+1}$ is in one-to-one correspondence with $\pi_k(SO(n))$ due to~\cite[Theorem 18.5]{Steenrod}. It is well-known that
\[
\pi_3(SO(4))\cong \Z\oplus\Z \ \text{ and } \ \pi_3(SO(n))\cong\Z \ \text{ for all } \ n\ge 5,
\]
see, for example,~\cite{tamura} (this computation for $n\ge 5$ follows from a well-known result of E.~Cartan that $\pi_3(G)\cong\Z$ if $G$ is a compact simple Lie group).
%
Therefore, for each $n\ge 4$, there are infinitely many principal $SO(n)$-bundles over $S^4$, each of which yields a non-trivial $\R P^n$-associate as in~\Cref{prop: different examples}.
}
\end{remark}

In view of~\Cref{rmk: bott}, the computation in~\Cref{prop: different examples} justifies that there are several cases in which the upper bound to $\dTC_{r}[p\colon E\to B]$ implied by~\Cref{thm: distributional analogue of Farber-Oprea} is better than the upper bound from~\eqref{eq: imp inequalities}, and that the gap between the $r$-th sequential parametrized distributional complexity and the $r$-th sequential parametrized topological complexity can be arbitrarily large on non-principal bundles.

\section*{Acknowledgment}
Navnath Daundkar gratefully acknowledges the support of the DST–INSPIRE Faculty Fellowship (Faculty Registration No. IFA24-MA218), Department of Science and Technology, Government of India. The authors thank the anonymous referee for valuable suggestions.

\bibliographystyle{plain} 
\bibliography{references}

\end{document}